\documentclass[12pt,a4paper]{amsart}

\usepackage{amsmath}
\usepackage{enumerate}
\usepackage{graphicx}
\usepackage{amsfonts}
\usepackage{amsthm}
\usepackage{amssymb}
\usepackage{esint}
\usepackage{bbm}
\usepackage[latin1]{inputenc}
\usepackage{hyperref}
\usepackage{microtype}
\usepackage{ifpdf}


\setlength{\parindent}{0pt}
\setlength{\parskip}{5pt}





\theoremstyle{plain}
\newtheorem{lemma}{Lemma}[section]
\newtheorem{prop}[lemma]{Proposition}
\newtheorem{coro}[lemma]{Corollary}
\newtheorem{theorem}[lemma]{Theorem}

\theoremstyle{definition}
\newtheorem{definition}[lemma]{Definition}
\newtheorem{remark}[lemma]{Remark}



\newcommand{\ts}{\hspace{0.5pt}}

\newcommand{\RR}{\mathbb{R}\ts}

\newcommand{\NN}{\mathbb{N}}



\newcommand{\aV}[1]{\left\Vert #1\right\Vert}
\newcommand{\as}[1]{\langle #1\rangle}

\newcommand{\ow}[1]{\widetilde{#1}}





\newcommand{\sumv}[1]{\sum_{#1\in V}}
\newcommand{\QN}{Q^{(N)}} 
\newcommand{\QD}{Q^{(D)}} 
\newcommand{\LN}{L^{(N)}} 
\newcommand{\LD}{L^{(D)}} 

\newcommand{\Hmm}[1]{\leavevmode{\marginpar{\tiny%
$\hbox to 0mm{\hspace*{-0.5mm}$\leftarrow$\hss}%
\vcenter{\vrule depth 0.1mm height 0.1mm width \the\marginparwidth}%
\hbox to 0mm{\hss$\rightarrow$\hspace*{-0.5mm}}$\\\relax\raggedright #1}}}

\begin{document}

\title{Global properties of Dirichlet forms on discrete spaces}

\author[]{Marcel Schmidt}
\email{schmidt.marcel@uni-jena.de}
\address{Mathematisches Institut, Friedrich Schiller Universit\"at Jena,
  07743 Jena, Germany \newline http://www.analysis-lenz.uni-jena.de/Team/Marcel+Schmidt.html}

\begin{abstract}

We provide an introduction to Dirichlet forms on discrete spaces and study their global properties such as recurrence,
stochastic completeness and regularity of the Neumann form. In this setting we compare the notion of 
a recurrent Dirichlet form and a recurrent discrete time Markov chain of a given graph. We prove several known and several new characterizations of recurrence  by using functional analytic Dirichlet form methods only. Finally, we compare all the mentioned global properties and discuss their relation to spectral theory.

\end{abstract}

\date{\today} %
\maketitle

\setlength{\parskip}{0pt}

\tableofcontents

\setlength{\parskip}{5pt}

\section*{Introduction}

 The study of  the long time behavior of sample paths of discrete time random walks on countable sets is a very well established area of mathematical research. Probably the most fundamental question in this direction is whether a given random walk is returning to each point infinitely often or not, i.e., whether the random walk is recurrent or transient. When dealing with the special case of Markov chains it is now well-known that these questions are intimately linked to the study of certain linear operators on function spaces over the state space, see e.g. the textbooks \cite{DS,FE, Soa, Woe}. If, additionally, the Markov chains are reversible the corresponding operators satisfy a further symmetry condition making them accessible to the theory of self-adjoint operators. These operators are sometimes referred to as discrete Laplacians. Independently, they have gained a lot of attention outside of probability theory in the realm of spectral geometry and spectral theory. 

Starting with the work of Yamasaki \cite{Yam1,Yam2} it was realized that potential theory also provides useful tools for studying the long-time behavior of discrete-time Markov chains. In the simplest case the potential theory is induced by a certain quadratic form coming from a weighted graph on the state space. The weights of the graph and the random walk correspond through its transition probabilities. It can be seen, that this approach  is indeed a special case of the potential theory of Dirichlet spaces as presented in \cite{FOT}. However, there the authors are interested in the study of symmetric Markov processes in continuous time. On countable state spaces this means that additional holding times at each site are introduced. As a consequence, the local behavior, i.e., the holding times and the transition probabilities, does not necessarily determine the process uniquely and the process might have a finite lifetime (be stochastically incomplete). 

One of the advantages of the approach of \cite{FOT} is that the so-called global properties discussed above can be formulated in purely functional analytic terms.  In \cite{KL}  Keller and Lenz quite recently introduced the full power of the theory of general regular Dirichlet forms of \cite{FOT} to the study of discrete Laplacians. They were inspired by the need of a convenient framework for the investigation of unbounded Laplacians on discrete spaces which recently gained considerable attention starting with the works \cite{Kel1, Web, Woj}. Keller and Lenz study the relation of spectral theory with the uniqueness of associated processes and their lifetime in quite some detail. However, a treatment of recurrence and transience is missing in their exposition. 

This is where the present text comes into play. It is intended to provide a convenient and self-contained study of recurrence and transience within the theory of Dirichlet forms on discrete spaces.  In particular, it provides an introduction to the theory of discrete Dirichlet spaces. 

As discussed above, recurrence and transience of random walks are
usually investigated  in discrete time and, when passing to
continuous time, other global properties enter the game. Thus, the
study of these properties via Dirichlet forms raises three main
questions: 

\begin{itemize}
 \item[(a)]  How is the notion of recurrence/transience for discrete time random walks related to the one for regular Dirichlet forms?
 \item[(b)] How can one use the theory of regular Dirichlet forms to prove known and new criteria for recurrence/transience? 
 \item[(c)] How is recurrence related to the uniqueness of the processes and their lifetime? 
\end{itemize}

The present article provides answers to all three questions. 

Question (a) is answered by Theorem \ref{discrete time v.s. continuous time}. It establishes an explicit formula which relates the Green's function of a discrete time random walk on a given graph and the Green's function of the regular Dirichlet form associated with this graph. In particular, it shows that both notions of recurrence agree. While the last result is certainly known to experts, a reference proving it in the full generality of the framework presented here seems to be missing. 

Given the previously discussed answer to  Question (a), Question (b) makes sense. For an answer the known criteria of recurrence and transience which deal with the existence of monopoles of finite finite energy (Theorem \ref{monopol}), the existence of superharmonic functions of finite energy (Theorem \ref{superharmonic}) and the capacity of points (Theorem \ref{potential}) are proven within the framework provided by \cite{FOT}. Two new criteria which are inspired by the works \cite{GM, JP} and deal with the vanishing of a boundary term in the discrete version of Green's formula are obtained (Theorem~\ref{integral recurrence} and Theorem~\ref{boundary recurrence}). Finally, the connection of recurrence to the spectral theory of the discrete Laplacian on the space of functions of finite energy is discussed in Theorem \ref{last}.  

Question (c) is answered at the end of the paper. Uniqueness of processes is addressed in analytic terms by asking when the Neumann form  associated with a graph is regular while the lifetime problem is studied analytically in terms of stochastic completeness of the regular Dirichlet form, see Section~\ref{further global properties}.  It turns out that regularity of the Neumann form and stochastic completeness are also related to the validity of Green's formula (Theorem  \ref{integral sc} and Theorem \ref{unique solvability QN=QD}) on certain function spaces. Furthermore, it is shown that recurrence always implies the regularity of the Neumann form and stochastic completeness (Theorem \ref{implications of recurrence}) and that all these global properties are equivalent when the underlying measure is finite (Theorem \ref{finite measure}). 

The paper is organized as follows. Section \ref{forms and spaces associated with graphs} introduces the basic objects of investigation, namely weighted graphs and an ensemble of associated forms, operators and spaces. In this presentation we basically follow \cite{HKLW,KL}. Section \ref{general theory} outlines the theory of recurrence and transience of Dirichlet forms as it is presented in \cite[Section~1.5]{FOT}. The contents of this section are certainly well-known. However, as the known proofs simplify substantially in the discrete setting we include them for the convenience of the reader. Section \ref{discrete time versus continuous time} compares the notion of recurrence of discrete time Markov chains and Dirichlet forms associated with graphs. In Section \ref{recurrence and transience} we discuss the consequences of the theory developed in Section \ref{general theory} when applied to a regular Dirichlet form on a discrete space. We recover known characterizations of recurrence purely by Dirichlet form methods and even obtain new ones by the use of these techniques. Section \ref{further global properties} introduces two further global properties, namely stochastic completeness and regularity of the Neumann form. We prove several characterizations of these properties. Section \ref{consequences of recurrence} deals with the connection of recurrence and the other global properties and with the connection of recurrence and spectral theory of the discrete Laplacian.

{\bf Remark on the history of this paper:} A preliminary version of this paper \cite{Sch} was published on the arXiv. It was not sent to a refereed journal since it is rather long and, besides new theory, contains some known results. However, this exposition turned out to close the gap between the textbooks \cite{FOT} and \cite{Soa, Woe} and seems to be quite useful. In fact, the way the theory is presented here is used in the publications \cite{GHKLW,HKLMS,HuK,Kel,KLSW,KLSWoi} which cite \cite{Sch}. 

{\bf Acknowledgements:} I am grateful to  Daniel Lenz for introducing me to this area and for the help when preparing this manuscript. I thank Matthias Keller and Sebastian Haeseler for the many fruitful discussion on the subject and Wolfgang Woess for giving useful comments on an earlier version of the text. Furthermore, I acknowledge the financial support of the Graduiertenkolleg 1523/2 : Quantum and gravitational fields.


\section{Forms and spaces associated with graphs} \label{forms and spaces associated with graphs}

In this chapter we introduce the objects of our studies. Following \cite{HKLW} we specify what we will call a weighted graph $(b,c)$ over a vertex set $V$, define the ensemble $(\ow{D},\ow{F},\ow{L},\ow{Q})$ of associated objects and show their basic connections. In the next section we equip $\ow{D}$ with an inner product which turns it into a Hilbert space. Since spaces of this sort were introduced in \cite{Yam1}, we call it the Yamasaki space $\mathbf{D}$. We prove some results about its structure. Finally, we introduce the notion of a Dirichlet form associated with $(b,c)$. Most results of this chapter are well known and we include their proofs for the convenience of the reader.

\subsection{Basic definitions}

Assume that $V$ is an infinite, countable set. Let $b:V\times V \to [0,\infty)$ such that 
\begin{itemize}
  \item [(b0)] $b(x,x) = 0$ for all $x \in V$,
  \item [(b1)] $b(x,y)=b(y,x)$ for all $x,y\in V$,
  \item [(b2)] $\sum_{y\in V} b(x,y)<\infty$ for all $x\in V$,
\end{itemize}
and let $c: V \to [0,\infty).$ We call the pair $(b,c)$ a weighted graph over the vertex set $V$. We say $x,y \in V$ are connected by an edge whenever $b(x,y)> 0$. In this case we write $x\sim y$ and call $b(x,y)$ the weight of the edge connecting $x$ and $y$. Vertices $x \in V$ with $c(x) > 0$ might be thought of being connected with a point $\infty$ which is not contained in $V$. We call a finite sequence of vertices $x_0,\ldots,x_n \in V$ a \emph{path} connecting $x_0$ and $x_n$ if $x_j \sim x_{j+1}$ for $j=0,\ldots,n-1$. A subset $W \subseteq V$ is said to be \emph{connected} if for every $x,y \in W$ there is a path in $W$ connecting $x$ and $y$. The quantity
$$\text{deg}(x) := \sum_{y\in V}b(x,y) + c(x)$$
is said to be the \emph{generalized vertex degree} of $x$. For a graph $(b,0)$ with $b \in \{0,1\}$ the number $\text{deg}(x)$ coincides with the number of edges emerging from $x$. A graph $(b,c)$ over $V$ is called \emph{locally finite} if the sets 
$$\{y \in V| \, \, b(x,y) > 0\}$$
are finite for every $x \in V$, i.e., each vertex is only connected with finitely many other vertices.

Let $C(V)$ be the set of all real-valued functions on $V$ and let $C_c(V)$ be the subset of all real-valued functions of finite support. To a graph $(b,c)$ over $V$ we associate the quadratic form 
$$\ow{Q}:= \widetilde{Q}_{b,c}: C(V) \to [0,\infty],$$
which acts by
$$ \ow{Q}(u) := \frac{1}{2}\sum_{x,y \in V} b(x,y)(u(x)-u(y))^2  + \sum_{x\in V} u(x)^2 c(x).$$  

The next lemma is crucial for further discussions. It shows that $\ow{Q}$ satisfies certain cut-off properties.

\begin{lemma} \label{Markov property}
Let $F: \RR \to \RR$ be a normal contraction (i.e.  $F(0) = 0$ and $|F(x)-F(y)| \leq |x-y|$ for all $x,y \in \RR$). For $u \in C(V)$ the inequality
$$\ow{Q}(F \circ u) \leq \ow{Q}(u)$$
holds. 
\end{lemma}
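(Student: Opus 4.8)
The plan is to prove the inequality by comparing the two defining sums of $\ow{Q}$ termwise. Since $b$ takes values in $[0,\infty)$ and $c$ in $[0,\infty)$, both sums consist of nonnegative summands, so they are well defined as elements of $[0,\infty]$, and a termwise inequality between summands immediately yields the corresponding inequality between the (possibly infinite) sums. Thus no rearrangement or convergence considerations are needed.

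First I would handle the ``edge part'' $\frac{1}{2}\sum_{x,y\in V}b(x,y)(u(x)-u(y))^2$. For fixed $x,y\in V$ the contraction estimate $|C(u(x))-C(u(y))|\leq|u(x)-u(y)|$ gives, after squaring, $(C(u(x))-C(u(y)))^2\leq(u(x)-u(y))^2$. Multiplying by $b(x,y)\geq 0$ and summing over all $x,y\in V$ shows that the edge part of $\ow{Q}(C\circ u)$ is bounded above by the edge part of $\ow{Q}(u)$. Next I would handle the ``potential part'' $\sum_{x\in V}u(x)^2 c(x)$, where the normalization $C(0)=0$ enters: applying the contraction property to the pair $(u(x),0)$ yields $|C(u(x))|=|C(u(x))-C(0)|\leq|u(x)-0|=|u(x)|$, hence $C(u(x))^2\leq u(x)^2$. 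Multiplying by $c(x)\geq 0$ and summing over $x\in V$ bounds the potential part of $\ow{Q}(C\circ u)$ by that of $\ow{Q}(u)$. Adding the two estimates gives $\ow{Q}(C\circ u)\leq\ow{Q}(u)$.

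The argument is entirely elementary, so I do not expect a genuine obstacle; the only point that warrants a moment's care is that the sums may equal $+\infty$, but since every quantity involved is nonnegative this causes no difficulty and the termwise comparison remains valid in $[0,\infty]$.
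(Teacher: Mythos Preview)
Your proof is correct and is precisely the ``direct calculation'' the paper alludes to: termwise comparison using $|C(u(x))-C(u(y))|\leq|u(x)-u(y)|$ for the $b$-part and $|C(u(x))|\leq|u(x)|$ (from $C(0)=0$) for the $c$-part. There is nothing to add.
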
 
\begin{proof} A direct calculation yields the statement.
\end{proof}

\begin{remark}
\begin{itemize}
\item Typical examples for normal contractions are 
$F(x) = |x|$ and $F(x) = (0\vee x)\wedge 1$ (here  $a \vee b = \max\{a,b\}$ and $a \wedge b = \min\{a,b\}$).
\item The above lemma is important for showing that certain restrictions of $\ow{Q}$ are Dirichlet forms.
\end{itemize}
\end{remark}
We will be interested in the space of all \emph{functions of finite energy}, which is defined as 
$$\ow{D} := \{u \in C(V)|\, \, \widetilde{Q}(u) < \infty \}. $$
For $x \in V$ let $\delta_x$ be the function on $V$ which vanishes everywhere except in $x$, where it takes value $1$. Obviously the equality
$$\ow{Q}(\delta_x) = \text{deg}(x)$$
holds. Therefore, assumption (b2) implies that $C_c(V)$, the space of finitely supported functions, is contained in $\widetilde{D}$. 

Abusing notation we extend $\widetilde{Q}$ to a bilinear map 
$$\ow{Q}:\widetilde{D}\times \widetilde{D} \to \RR$$
acting by
$$\widetilde{Q}(u,v) := \frac{1}{2}\sum_{x,y \in V} b(x,y)(u(x)-u(y)) (v(x)-v(y)) + \sum_{x\in V} u(x)v(x) c(x).$$
Note that the above sum is absolutely convergent by the definition of $\widetilde{D}$. 
\begin{remark}
As for $\ow{Q}$ we will usually write $B(u)$ instead of $B(u,u)$ when dealing with a bilinear from $B$. 
\end{remark}
We come  to the \emph{formal operator} $\ow{L}$ associated with $\widetilde{Q}$. Let 
$$\ow{F} = \{u:V \to \RR \, | \sum_{y \in V}b(x,y)|u(y)| < \infty \text{ for all } x\in V \}$$
and $m:V \to (0,\infty)$. We   define 
$$\ow{L}:= \ow{L}_{b,c,m}: \ow{F} \to C(V)$$
via
$$\ow{L}u (x) := \frac{1}{m(x)}\sum_{y\in V}b(x,y)(u(x)-u(y)) + \frac{c(x)}{m(x)}u(x).$$
The definition of $\ow{F}$ and (b2) ensure that the sum is absolutely convergent. The following lemma is the crucial link between $\ow{L}$ and $\ow{Q}$. For various versions of this statement, see e.g. \cite{HK, HKLW, KL}. 
\begin{lemma}[Green's formula]  \label{green} For all $u \in \ow{D}$ and $v \in C_c(V)$ the equality
$$\ow{Q}(u,v) = \sum_{x\in V} (\ow{L}u)(x)v(x)m(x) = \sum_{x\in V} u(x)(\ow{L}v)(x)m(x)$$
holds.

\end{lemma}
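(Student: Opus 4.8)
The plan is to reduce both equalities to a single symmetrisation identity for the double sum defining $\ow{Q}$, once the relevant rearrangements of series have been justified.

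First I would check $\ow{D} \subseteq \ow{F}$, so that $\ow{L}u$ is defined for $u \in \ow{D}$ (and, since $C_c(V) \subseteq \ow{D}$, also $\ow{L}v$): for fixed $x$, Cauchy--Schwarz together with (b2) gives $\sum_{y} b(x,y)\,|u(x)-u(y)| \le \bigl(\sum_y b(x,y)\bigr)^{1/2}\bigl(\sum_y b(x,y)(u(x)-u(y))^2\bigr)^{1/2} < \infty$, and adding $|u(x)|\sum_y b(x,y)$ shows $\sum_y b(x,y)|u(y)| < \infty$. I would also record the symmetric form of this fact: by (b1), $u \in \ow{F}$ is equivalent to $\sum_x b(x,y)|u(x)| < \infty$ for every $y$.

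Next comes the analytic core: absolute convergence of the iterated sums. Write $S = \supp v$ (finite). The sum $\sum_x |(\ow{L}u)(x)|\,|v(x)|\,m(x)$ is a finite sum over $x \in S$ of the absolutely convergent quantities $\sum_y b(x,y)|u(x)-u(y)| + c(x)|u(x)|$, hence finite; so Fubini applies and, noting that $m(x)$ cancels,
\[
\sum_x (\ow{L}u)(x)\,v(x)\,m(x) = \sum_{x,y} b(x,y)(u(x)-u(y))v(x) + \sum_x c(x)u(x)v(x),
\]
with the first double sum absolutely convergent. For the other iterated sum I would split according to whether $x \in S$: the part with $x \in S$ is again a finite sum of absolutely convergent terms, while for $x \notin S$ one has $v(x) = 0$, so $\sum_{x\notin S}|u(x)|\sum_y b(x,y)|v(x)-v(y)| = \sum_{y \in S}|v(y)|\sum_{x} b(x,y)|u(x)| < \infty$ by the symmetric form of $u \in \ow{F}$. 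Hence $\sum_x |u(x)|\,|(\ow{L}v)(x)|\,m(x) < \infty$, and Fubini gives
\[
\sum_x u(x)(\ow{L}v)(x)\,m(x) = \sum_{x,y} b(x,y)(v(x)-v(y))u(x) + \sum_x c(x)u(x)v(x).
\]

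Finally I would invoke the symmetrisation identity: for any $f,g$ with $\sum_{x,y} b(x,y)|f(x)-f(y)|\,|g(x)| < \infty$, splitting the (absolutely convergent) sum and interchanging the names of $x$ and $y$ in one half, using $b(x,y)=b(y,x)$, yields
\[
\sum_{x,y} b(x,y)(f(x)-f(y))g(x) = \tfrac12 \sum_{x,y} b(x,y)(f(x)-f(y))(g(x)-g(y)).
\]
Applying this with $(f,g)=(u,v)$ and with $(f,g)=(v,u)$ turns both displayed expressions into $\tfrac12\sum_{x,y} b(x,y)(u(x)-u(y))(v(x)-v(y)) + \sum_x c(x)u(x)v(x) = \ow{Q}(u,v)$, which is exactly the claim. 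The only real obstacle is the bookkeeping of convergence in the second iterated sum, where both indices range over the infinite set $V$ and one must play the finite support of $v$ against $u \in \ow{F}$; the rest is the same formal symmetrisation that already underlies the computation in Lemma~\ref{Markov property}.
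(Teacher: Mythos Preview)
Your proof is correct and follows essentially the same route as the paper: first establish $\ow{D}\subseteq\ow{F}$ via Cauchy--Schwarz and (b2), then justify the rearrangements by showing the relevant double sums are absolutely convergent using the finite support of $v$ together with $u\in\ow{F}$. The paper is slightly terser---it verifies the finiteness of $\sum_{x,y} b(x,y)|u(x)v(y)|$ and $\sum_{x,y} b(x,y)|u(x)v(x)|$ and then says the equalities follow by a simple computation---whereas you spell out the symmetrisation identity and treat both iterated sums explicitly, but the substance is the same.
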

\begin{proof}
The proof will be done in two steps. First we show $\ow{D} \subseteq \ow{F}$ following Proposition 2.8 of \cite{HKLW}. As a second step we prove the desired equality as in Lemma 4.7 of \cite{HK}.

\emph{Step 1}: Let $u\in \ow{D}$. Using Cauchy-Schwarz inequality we obtain
\begin{align*}
	\sum_{y\in V}& b(x,y)|u(y)| \leq \sum_{y\in V}b(x,y)|u(x)-u(y)| + \sum_{y\in V}b(x,y)|u(x)|\\
	&\leq \left(\sum_{y\in V}b(x,y)\right)^{1/2} \left(\sum_{y\in V}b(x,y)|u(x)-u(y)|^2 \right)^{1/2} + \text{deg}(x)|u(x)| \\
	&\leq \text{deg}(x)^{1/2} \ow{Q}(u)^{1/2} + \text{deg}(x)|u(x)| < \infty.
\end{align*}
This shows $u \in \ow{F}$.

\emph{Step 2}: Let $u \in \ow{D}$ and $v \in C_c(V)$. Step 1 and (b1) yield
$$\sum_{x,y \in V} b(x,y)|u(x)v(y)| = \sum_{y\in V} |v(y)|\sum_{x\in V} b(x,y)|u(x)| < \infty.$$
Moreover, by condition (b2) we obtain
$$\sum_{x,y \in V} b(x,y)|u(x)v(x)| = \sum_{x\in V} |v(x)||u(x)|\sum_{y\in V} b(x,y) < \infty.$$
This allows us to rearrange the summation of 
$$  \sum_{x\in V}\left( \sum_{y\in V} b(x,y)(u(x)-u(y))v(x) + c(x)u(x)v(x)\right)$$
and the statement follows by a simple computation.
\end{proof}

The next lemma is standard. It shows some continuity of differences of function values with respect to $\ow{Q}$, see e.g. \cite[Lemma~2.5]{JP}.

\begin{lemma} \label{difference functional}
Let $(b,c)$ be connected and  $x,y \in V$. Then, there exists a constant $K_{x,y} > 0$  such that for every $u \in \ow{D}$  the inequality
$$|u(x)-u(y)| \leq K_{x,y} \, \ow{Q}(u)^{1/2} $$
holds. 
\end{lemma}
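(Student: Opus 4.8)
The plan is to reduce to the case of adjacent vertices and then chain along a path. First I would treat the case $x \sim y$, i.e. $b(x,y) > 0$. Since every summand in the definition of $\ow{Q}$ is non-negative, we have
$$b(x,y)\,(u(x)-u(y))^2 \;\leq\; \sum_{x',y'\in V} b(x',y')(u(x')-u(y'))^2 \;\leq\; 2\,\ow{Q}(u),$$
so that $|u(x)-u(y)| \leq \sqrt{2/b(x,y)}\;\ow{Q}(u)^{1/2}$, and $K_{x,y} := \sqrt{2/b(x,y)}$ does the job in this case.

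For general $x,y \in V$, connectedness of $(b,c)$ provides a path $x = x_0 \sim x_1 \sim \dots \sim x_n = y$. (If $x = y$ there is nothing to prove; take $K_{x,y} = 1$.) The triangle inequality together with the adjacent-vertex bound gives
$$|u(x)-u(y)| \;\leq\; \sum_{j=0}^{n-1} |u(x_j)-u(x_{j+1})| \;\leq\; \Biggl(\sum_{j=0}^{n-1} \sqrt{\tfrac{2}{b(x_j,x_{j+1})}}\,\Biggr)\ow{Q}(u)^{1/2},$$
so we may take $K_{x,y} := \sum_{j=0}^{n-1} \sqrt{2/b(x_j,x_{j+1})}$, which is a finite positive constant depending only on $x$, $y$ and the chosen path, but not on $u$.

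There is essentially no obstacle here: the only inputs are non-negativity of the terms in $\ow{Q}$, the triangle inequality, and the existence of a connecting path, which is exactly the hypothesis that $(b,c)$ is connected. The one point to be slightly careful about is that the constant depends on a choice of path; this is harmless, since the statement only asserts existence of \emph{some} $K_{x,y}$. If one wished, one could optimize over all paths from $x$ to $y$ to obtain the sharpest such constant, but this refinement is not needed for the applications.
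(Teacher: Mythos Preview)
Your proof is correct and follows essentially the same route as the paper: reduce to a path and bound edge by edge. The only minor difference is that the paper applies Cauchy--Schwarz to the whole sum $\sum_j |u(x_{j-1})-u(x_j)|$ along a path with pairwise distinct vertices, obtaining the slightly sharper constant $K_{x,y} = \bigl(\sum_j 1/b(x_{j-1},x_j)\bigr)^{1/2}$ instead of your $\sum_j \sqrt{2/b(x_{j-1},x_j)}$; for the mere existence of $K_{x,y}$ this makes no difference.
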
  
\begin{proof}
Let $x = x_0,...,x_n=y$ be a path of pairwise different vertices connecting $x$ and $y$. Set 
$$K_{x,y} =\left(\sum_{j=1}^n \frac{1}{b(x_{j-1},x_j)} \right)^{1/2}.$$
By Cauchy-Schwarz inequality we infer
\begin{align*}
	|u(x)-u(y)| &\leq \sum_{j=1}^n |u(x_j)-u(x_{j-1})| \\ 
	&\leq K_{x,y} \, \left(\sum_{j=1}^n b(x_{j-1},x_j) |u(x_j)-u(x_{j-1})|^2 \right)^{1/2}.\\
\end{align*}
Since the $x_j$ were assumed to be pairwise different the sum in the last term of the above inequality can be estimated by $\ow{Q}$. This yields the statement. 
\end{proof}


\subsection{The Yamasaki space}

Following \cite{Soa}, we introduce a Hilbert space associated with a connected graph $(b,c)$. Fix some reference point $o \in V$. For $u,v \in \ow{D}$ we define the inner product
$$ \as{u,v}_o := \ow{Q}(u,v) + u(o)v(o).$$
Let $\|\cdot\|_o$ be the corresponding norm which is non-degenerate due to the connectedness of $(b,c)$. The following proposition is part of Lemma 3.14 and Theorem 3.15 in \cite{Soa}.
\begin{prop}[Properties of $(\ow{D},\as{\cdot,\cdot}_o)$] \label{structure of D} 
Let $(b,c)$ be a connected graph over $V$. The following statements hold.
\begin{itemize}
 \item[(a)] For every $x \in V$ the functional $F_x: \ow{D} \to \RR$, $F_x[u] =  u(x)$ is continuous with respect to $\aV{\cdot}_o$. In particular, for $o, o' \in V$ the norms $\aV{\cdot}_o$ and $\aV{\cdot}_{o'}$ are equivalent.
 \item[(b)] $\ow{D}$ equipped with $\as{\cdot,\cdot}_o$ is a Hilbert space.  
\end{itemize}
\end{prop}
\begin{proof}
(a) We only need to prove the first statement. The 'In particular' part follows from the first statement as it shows that $u \mapsto u(o)$ is continuous with respect to $\aV{\cdot}_{o'}$. 

Let $x \in V$ be given. Choose a path $o = x_0,...,x_n = x$ of pairwise different points from $o$ to $x$ and let $K_{x_{i-1},x_i}$ be constants for these vertices as in Lemma \ref{difference functional}. We obtain
\begin{align*}
	|u(x)| &\leq \sum_{i=1}^n |u(x_{i-1})-u(x_i)| + |u(o)|\\
	& \leq \ow{Q}(u)^{1/2} \sum_{i=1}^n K_{x_{i-1},x_i} + \aV{u}_o.
\end{align*} 
This finishes the proof of (a).

(b) It suffices to show completeness. Let $(u_n)$ be a Cauchy sequence in $(\ow{D},\as{\cdot,\cdot}_o)$. By (a) the sequence $(u_n)$ converges pointwise towards a function $u$. Fatou's lemma yields
$$ \ow{Q}(u-u_n) \leq \liminf_{l\to \infty} \ow{Q}(u_l-u_n),$$
which can be made small by choosing $n$ large enough. This shows $u \in \ow{D}$ and $u_n \to u$ with respect to $\aV{\cdot}_o$.
\end{proof}

\begin{definition}[Yamasaki space]
 The pair $(\ow{D},\as{\cdot ,\cdot}_o)$ is called the \emph{Yamasaki space} associated with $(b,c)$. We write $\mathbf{D}$ for short whenever we refer to $\ow{D}$ endowed with the topology generated by $\aV{\cdot}_o$.  The set $\mathbf{D}_0$ is the closed subspace of $\mathbf{D}$ given by
$$\mathbf{D}_0 := \overline{C_c(V)}^{\aV{\cdot}_o}.$$ 
\end{definition}

\begin{remark}
 Lemma \ref{structure of D} shows that the topology on $\mathbf{D}$ and, in particular, the space $\mathbf{D}_0$ do not depend on the choice of the reference point $o \in V$.
\end{remark}
For our further discussion, we will need some more approximation results based on the following theorem. The presented proof was suggested by Daniel Lenz.
\begin{theorem}[Convergence in $\mathbf{D}$] \label{convergence in D}
Let $u_n, u \in \mathbf{D}$ be given. The following assertions are equivalent.
\begin{itemize}
 \item[(i)] $\|u_n - u\|_o \to 0$, as $n \to \infty$.
 \item[(ii)]$\limsup_{n\to \infty} \ow{Q}(u_n) \leq \ow{Q}(u) \text{ and } u_n \to u \text{ pointwise, as } n\to \infty.$
\end{itemize}
\end{theorem}

\begin{proof}
The implication  (i) $\Rightarrow$ (ii) follows from Proposition \ref{structure of D} (a). 

For poving implication (ii) $\Rightarrow$ (i) we let $u_n, u$ be given such that $u_n \to u$ pointwise and $\limsup\ow{Q}(u_n) \leq \ow{Q}(u)$. These two conditions imply that $(u_n)$ is a bounded sequence in $\mathbf{D}$. Since $\mathbf{D}$ is a Hilbert space every ball is weakly compact, thus every subsequence of $(u_n)$ has a weakly convergent subsequence. As $(u_n)$ converges pointwise towards $u$, all the occurring limits must coincide. We infer $u_n \to u$ weakly in $\mathbf{D}$. With this observation the desired statement follows from the inequality
\begin{align*}
0&\leq \ow{Q}(u-u_n) +(u(o)-u_n(o))^2\\
 &= \ow{Q}(u) + u(o)^2 + \ow{Q}(u_n) + u_n(o)^2 - 2\as{u,u_n}_o,
\end{align*} 
after taking $\limsup$.
\end{proof}

\begin{coro}\label{bounded convergence}
Let $(b,c)$ be connected and  $u\in \mathbf{D}$ be given. For any natural number $N$ the function $u_N := ((-N)\vee u)\wedge N$ belongs to $\mathbf{D}$ and $\|u_N - u\|_o \to 0$, as $N \to \infty$.
\end{coro}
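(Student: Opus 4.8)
The plan is to apply Theorem~\ref{convergence in D} to the truncations $u_N = ((-N)\vee u)\wedge N$. Since the map $x \mapsto ((-N)\vee x)\wedge N$ is a normal contraction with value $0$ at $0$, Lemma~\ref{Markov property} immediately gives $\ow{Q}(u_N) \leq \ow{Q}(u) < \infty$, so each $u_N$ lies in $\ow{D} = \mathbf{D}$; this settles the first assertion and, as a bonus, provides $\limsup_N \ow{Q}(u_N) \leq \ow{Q}(u)$, which is one of the two hypotheses needed for the theorem.

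For the convergence, I would verify the remaining hypothesis of Theorem~\ref{convergence in D}: pointwise convergence $u_N(x) \to u(x)$ for every $x \in V$. This is elementary, since for fixed $x$ we have $u_N(x) = u(x)$ as soon as $N \geq |u(x)|$. Combining this with the bound on $\ow{Q}(u_N)$ just obtained, Theorem~\ref{convergence in D} yields $u_N \overset{\aV{\cdot}_o}{\to} u$, which is exactly the second assertion.

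There is essentially no obstacle here: the corollary is a direct consequence of Lemma~\ref{Markov property} (to stay in $\mathbf{D}$ with controlled form norm) together with Theorem~\ref{convergence in D} (to upgrade trivial pointwise convergence to norm convergence). The only point requiring the slightest care is confirming that truncation at level $N$ is genuinely a normal contraction fixing the origin, so that Lemma~\ref{Markov property} applies — but this is immediate from the definition. I would keep the write-up to a few lines.
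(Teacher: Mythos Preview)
Your proposal is correct and follows precisely the paper's own argument: apply Lemma~\ref{Markov property} to obtain $\ow{Q}(u_N)\leq \ow{Q}(u)$ (hence $u_N\in\mathbf{D}$ and the $\limsup$ bound), note the trivial pointwise convergence, and conclude via Theorem~\ref{convergence in D}. The paper's proof is in fact even terser than yours, but the logic is identical.
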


\begin{proof}
Lemma \ref{Markov property} shows $\ow{Q} (u_N) \leq \ow{Q}(u)$. We infer the statement from the previous theorem.
\end{proof}

\begin{coro} \label{small convergence}
Let $(b,c)$ be connected and assume $c\equiv 0$. Let $(e_n)$ be a sequence in $\mathbf{D}$ such that 
$$ \aV{e_n - 1}_o \to 0, \text{ as } n \to \infty.$$
Furthermore, let $u\in \mathbf{D}$ with $0 \leq u \leq 1$ be given. Then, $$\aV{e_n \wedge u - u}_o \to 0\text{, as } n \to \infty.$$ 
\end{coro}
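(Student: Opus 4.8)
The plan is to verify the two hypotheses of Theorem~\ref{convergence in D} for the sequence $e_n \wedge u$ with limit $u$: pointwise convergence, and $\limsup \ow{Q}(e_n \wedge u) \leq \ow{Q}(u)$. Pointwise convergence is easy: by Proposition~\ref{structure of D}(a), $\aV{e_n - 1}_o \to 0$ forces $e_n(x) \to 1$ for every $x \in V$, hence $(e_n \wedge u)(x) = \min\{e_n(x), u(x)\} \to \min\{1, u(x)\} = u(x)$ since $u \leq 1$. So the real content is the energy estimate.

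For the $\limsup$ bound I would work directly with the expression for $\ow{Q}$ on a graph with $c \equiv 0$, namely $\ow{Q}(v) = \tfrac12 \sum_{x,y} b(x,y)(v(x)-v(y))^2$. The natural approach is to compare, edge by edge, the increment of $e_n \wedge u$ with the increments of $e_n$ and of $u$. Write $f_n = e_n \wedge u$. A useful device is to note that $e_n \wedge u = u - (u - e_n)_+$, or alternatively to split each edge $\{x,y\}$ according to which of $e_n, u$ is the smaller value at each endpoint. On an edge where $u$ is the active (smaller) function at both endpoints, the increment of $f_n$ equals that of $u$; where $e_n$ is active at both endpoints, it equals that of $e_n$, which is small because $e_n \to 1$; and on the mixed edges one estimates $(f_n(x) - f_n(y))^2$ by a combination of $(u(x)-u(y))^2$ and $(e_n(x)-e_n(y))^2$. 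Summing, one gets $\ow{Q}(e_n \wedge u) \leq \ow{Q}(u) + \varepsilon_n$ where $\varepsilon_n$ is controlled by $\ow{Q}(e_n - 1) = \ow{Q}(e_n) \to 0$ together with the pointwise convergence; a cleaner route may be to use bilinearity and Cauchy--Schwarz: $\ow{Q}(f_n) \le \ow{Q}(u) + 2|\ow{Q}(f_n - u, u)| + \ow{Q}(f_n - u)$ is not obviously helpful since $f_n - u$ need not be small in norm, so the edge-wise comparison is the more robust strategy.

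The main obstacle I expect is precisely making the mixed-edge term vanish in the limit: on those edges $f_n - u$ is genuinely of order $1$, so one cannot simply bound $\ow{Q}(f_n - u)$. The trick is that a mixed edge $\{x,y\}$ — say $e_n(x) \le u(x)$ but $e_n(y) > u(y)$ — forces $e_n(x) \le u(x) \le 1$ and $e_n(y) > u(y)$, and one checks $|f_n(x) - f_n(y)| = |e_n(x) - u(y)| \le |e_n(x) - 1| + |1 - u(y)| \le \dots$; more to the point, $|f_n(x)-f_n(y)| \le |u(x)-u(y)| + |e_n(x) - 1|$ on such an edge (since $e_n(x) < u(x) \le 1$ squeezes $f_n(x)$ between $u(x)$ and $u(y)$ up to the error $|e_n(x)-1|$, using $f_n(y)=u(y)$). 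Then $(f_n(x)-f_n(y))^2 \le (u(x)-u(y))^2 + 2|u(x)-u(y)|\,|e_n(x)-1| + (e_n(x)-1)^2$, and summing the cross term over all edges with Cauchy--Schwarz gives a bound $\le \ow{Q}(u)^{1/2}\,\ow{Q}(e_n-1)^{1/2}$, which tends to $0$. Combining the three edge types yields $\limsup_n \ow{Q}(e_n \wedge u) \le \ow{Q}(u)$, and Theorem~\ref{convergence in D} then gives $\aV{e_n \wedge u - u}_o \to 0$.
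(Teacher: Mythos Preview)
Your overall plan---verify pointwise convergence and the energy bound $\limsup \ow{Q}(e_n\wedge u)\le \ow{Q}(u)$, then invoke Theorem~\ref{convergence in D}---is exactly the paper's plan, and the pointwise part is fine. The problem is in your mixed-edge estimate. You bound the increment on a mixed edge by $|u(x)-u(y)|+|e_n(x)-1|$ and then claim that summing the square terms and cross terms is controlled by $\ow{Q}(u)$ and $\ow{Q}(e_n-1)$. But Cauchy--Schwarz on the cross term gives
\[
\sum_{x,y} b(x,y)\,|u(x)-u(y)|\,|e_n(x)-1|\;\le\;\bigl(2\ow{Q}(u)\bigr)^{1/2}\Bigl(\sum_x \deg(x)\,(e_n(x)-1)^2\Bigr)^{1/2},
\]
and the second factor is \emph{not} $\ow{Q}(e_n-1)^{1/2}$: the form $\ow{Q}(e_n-1)=\ow{Q}(e_n)$ involves differences $(e_n(x)-e_n(y))^2$, not the weighted $\ell^2$-norm $\sum_x \deg(x)(e_n(x)-1)^2$. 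The hypothesis $\aV{e_n-1}_o\to 0$ gives you no control whatsoever on the latter; for instance, if the $e_n$ are compactly supported (as they are in the application, Theorem~\ref{potential}(iii)), then $e_n(x)-1=-1$ off a finite set and $\sum_x\deg(x)(e_n(x)-1)^2=\infty$. The same objection kills the squared term $(e_n(x)-1)^2$.

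Your edge-wise strategy can be rescued: on a mixed edge (say $e_n(x)\le u(x)$ and $e_n(y)>u(y)$) one has $f_n(x)=e_n(x)$, $f_n(y)=u(y)$, and a short case check gives $|f_n(x)-f_n(y)|\le\max\bigl(|u(x)-u(y)|,\,|e_n(x)-e_n(y)|\bigr)$, hence $(f_n(x)-f_n(y))^2\le (u(x)-u(y))^2+(e_n(x)-e_n(y))^2$. Summing over all three edge types now legitimately yields $\ow{Q}(e_n\wedge u)\le \ow{Q}(u)+\ow{Q}(e_n)$, and $\ow{Q}(e_n)=\ow{Q}(e_n-1)\to 0$. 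The paper avoids the case analysis altogether by writing $e_n\wedge u=\tfrac12(u+e_n-|u-e_n|)$ and using the triangle inequality for $\ow{Q}^{1/2}$ together with the contraction property $\ow{Q}(|v|)\le\ow{Q}(v)$, which gives $\ow{Q}(e_n\wedge u)^{1/2}\le \ow{Q}(u)^{1/2}+\ow{Q}(e_n)^{1/2}$ in two lines.
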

\begin{proof}
Proposition \ref{structure of D} shows that convergence with respect to $\aV{\cdot}_o$ implies pointwise convergence, hence $e_n \to 1$ pointwise. Since $0 \leq u \leq 1$ this also yields $e_n\wedge u \to u$ pointwise. Using Theorem \ref{convergence in D} it suffices to show $\limsup \ow{Q}(e_n \wedge u) \leq \ow{Q}(u)$ to infer the statement. By the triangle inequality for $\ow{Q}^{1/2}$ and Lemma \ref{Markov property} we obtain
\begin{align*}
\ow{Q}(e_n \wedge u)^{1/2} &= \frac{1}{2}\ow{Q}(u+e_n-|u-e_n|)^{1/2}\\
&\leq \frac{1}{2}\left[\ow{Q}(u)^{1/2}+ \ow{Q}(e_n)^{1/2}  + \ow{Q}(|u-e_n|)^{1/2}\right]\\
&\leq \ow{Q}(u)^{1/2} + \ow{Q}(e_n)^{1/2}.
\end{align*}
Since $c\equiv 0$ the constant function $1$ satisfies $\ow{Q}(1) = 0$. Hence, the convergence $\aV{e_n - 1}_o \to 0$ implies $\ow{Q}(e_n) \to 0,$ as $n \to \infty$. With this observation the above calculation yields the statement after taking $\limsup$.
\end{proof}


\subsection{Dirichlet forms}

In this subsection we introduce Dirichlet forms associated with graphs and prove some of their properties. For basic definitions of Dirichlet form theory we refer to Appendix~\ref{appendix:dirichlet forms}. Let $m$ be a measure of full support on $V$, i.e., a function $m:V \to (0,\infty)$ which induces a measure by setting
$$m(A) := \sum_{x\in A}m(x).$$
The sets  
$$\ell^p(V,m) := \{u:V \to \RR \,|\,  \sum_{x \in V} |u(x)|^p m(x) < \infty \}$$ 
endowed with the norm 
$$\aV{u}_p := \left( \sumv{x}|u(x)|^p m(x) \right)^{1/p} $$
are Banach spaces. Moreover, the case $p=2$ provides a Hilbert space with inner product given by 
$$\as{f,g} := \sumv{x}f(x)g(x)m(x). $$
For positive $f,g \in C(V)$ we abuse notation and let
$$\as{f,g} := \sum_{x\in V} f(x)g(x)m(x) \in [0,\infty].$$

As usual, $\ell^\infty(V)$ denotes the set of all bounded functions on $V$. It comes with the corresponding norm
$$\aV{u}_\infty := \sup _{x \in V} |u(x)|.$$
In the subsequent chapters we will be concerned with restrictions of $\ow{Q}$ to certain $\ell^2$-domains such that the emerging forms become Dirichlet forms. There is a maximal and a minimal choice for such domains. The maximal $\ell^2$-domain given by $D(\QN) = \ow{D} \cap \ell^2(V,m)$. We  call $\ow{Q}|_{D(\QN)}$ the {\em Neumann form associated to $(b,c)$} and write $\QN$ for short. The following summarizes properties of $\QN$ and its associated operator. 
\begin{prop} \label{QN}
$\QN$ is a Dirichlet form. Its associated self-adjoint operator $\LN$ is a restriction of $\ow{L}$ with domain $D(\LN)$ satisfying
$$D(\LN) \subseteq \{u \in \ow{D}\cap\ell^2(V,m) \,| \, \, \ow{L}u \in \ell^2(V,m)\}.$$ 
\end{prop}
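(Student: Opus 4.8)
The plan is to verify, in turn, the three defining properties of a symmetric Dirichlet form for $\QN=\ow{Q}|_{D(\QN)}$ — that it is \emph{densely defined}, \emph{closed}, and \emph{Markovian} — and then to identify its associated operator by means of Green's formula (Lemma~\ref{green}). Symmetry and non-negativity are inherited from $\ow{Q}$, so nothing is needed there. For dense definedness I would simply recall that $C_c(V)\subseteq\ow{D}$ (as observed right after the definition of $\delta_x$) and $C_c(V)\subseteq\ell^2(V,m)$, hence $C_c(V)\subseteq D(\QN)$, and $C_c(V)$ is dense in $\ell^2(V,m)$.

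Closedness is the technical core. Equip $D(\QN)$ with the form norm $\aV{u}_{\QN}^2=\QN(u)+\aV{u}_2^2$ and let $(u_n)$ be a Cauchy sequence in it. Then $(u_n)$ is Cauchy in $\ell^2(V,m)$, hence converges there to some $u\in\ell^2(V,m)$; since $m(x)>0$ for every $x$, the whole sequence also converges pointwise, $u_n(x)\to u(x)$. Fixing $n$ and letting $l\to\infty$, I would apply Fatou's lemma termwise to the two (non-negative) sums defining $\ow{Q}$ to get $\ow{Q}(u-u_n)\leq\liminf_{l\to\infty}\ow{Q}(u_l-u_n)\leq\liminf_{l\to\infty}\aV{u_l-u_n}_{\QN}^2$, which is arbitrarily small once $n$ is large. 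Thus $u-u_n\in\ow{D}$ for large $n$, so $u\in\ow{D}\cap\ell^2(V,m)=D(\QN)$ and $\aV{u-u_n}_{\QN}\to0$; this is essentially the completeness argument behind Proposition~\ref{structure of D}(b).

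For the Markov property, let $C$ be a normal contraction (in fact only the unit contraction is needed). Lemma~\ref{Markov property} yields the energy bound $\ow{Q}(C\circ u)\leq\ow{Q}(u)$, while $|C(u(x))|=|C(u(x))-C(0)|\leq|u(x)|$ gives $C\circ u\in\ell^2(V,m)$ with $\aV{C\circ u}_2\leq\aV{u}_2$; together these show $C\circ u\in D(\QN)$ and $\QN(C\circ u)\leq\QN(u)$. Finally, for the operator $\LN$ associated with the closed form $\QN$: if $u\in D(\LN)$ then $u\in D(\QN)\subseteq\ow{D}\subseteq\ow{F}$ — the last inclusion being Step~1 in the proof of Lemma~\ref{green} — so $\ow{L}u\in C(V)$ is well defined. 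For every $v\in C_c(V)$, Green's formula together with the defining relation $\QN(u,v)=\as{\LN u,v}$ gives $\sum_{x\in V}(\ow{L}u)(x)v(x)m(x)=\sum_{x\in V}(\LN u)(x)v(x)m(x)$, and testing against $v=\delta_x$ shows $\ow{L}u=\LN u$ pointwise. Hence $\LN$ is a restriction of $\ow{L}$ and $\ow{L}u=\LN u\in\ell^2(V,m)$, which is precisely the asserted inclusion for $D(\LN)$.

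The only step I expect to require genuine care is closedness: justifying the termwise application of Fatou's lemma to the double sum defining $\ow{Q}$, and making sure that the entire Cauchy sequence — not merely a subsequence — converges in the form norm. The remaining steps are routine bookkeeping with facts already established in this chapter.
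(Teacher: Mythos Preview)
Your proof is correct and follows the same route as the paper: the Markov property via Lemma~\ref{Markov property}, and the identification of $\LN$ by pairing Green's formula against $\delta_x$. You are in fact more thorough than the paper's own proof, which writes ``it suffices to show the Markov property'' and leaves closedness (your Fatou argument, essentially Proposition~\ref{structure of D}(b)) and dense definedness unstated.
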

\begin{proof} We first show the closedness of $\QN$. To this end, let $(u_n)$ be a Cauchy sequence  in $D(\QN)$ with respect to the inner product $\QN(\cdot,\cdot) + \as{\cdot,\cdot}$. It has a limit $u \in \ell^2(V,m)$ with respect to $\|\cdot\|_2$. Since $\ell^2$-convergence implies pointwise convergence, Fatou's lemma yields
$$\ow{Q}(u - u_n) \leq \liminf_{m\to \infty} \ow{Q} (u_m - u_n) = \liminf_{m\to \infty} \QN (u_m - u_n).$$
This inequality shows $u\in D(\QN)$ and $u_n \to u$ with respect to $\QN(\cdot,\cdot) + \as{\cdot,\cdot}$, i.e., the closedness of $\QN$.

Next we come to the Markov property of $\QN$. Let $F: \RR \to \RR$ be a normal contraction. Lemma~\ref{Markov property} shows that for $u \in \ow{D}$ the inequality
$$\ow{Q}(F \circ u) \leq \ow{Q}(u)$$
holds. If, furthermore, $u \in \ell^2(V,m)$ the function  $F\circ u$ also belongs to $\ell^2(V,m)$. This proves the Markov property of $\QN$.

For the statement on the associated operator let $u \in D(\LN)$. Using the notation $\hat{\delta}_x = m(x)^{-1}\delta_x$ and Lemma~\ref{green} we obtain
$$(\LN u)(x) = \as{\LN u,\hat{\delta}_x} = \QN(u,\hat{\delta}_x) = \as{\ow{L}u,\hat{\delta}_x} = (\ow{L}u)(x).
$$
This shows that $\LN$ is a restriction of $\ow{L}$ and also implies 
$$D(\LN) \subseteq \{u \in \ow{D}\cap\ell^2(V,m) \,| \, \, \ow{L}u \in \ell^2(V,m)\}.  $$ 
This finishes the proof.
\end{proof}

The second important choice for a domain is given by
$$D(\QD) = \overline{C_c(V)}^{\aV{\cdot}_Q},$$
where the closure is taken with respect to the form norm 
$$\|\cdot\|_Q := \sqrt{\ow{Q}(\cdot) + \aV{\cdot}_2^2},$$
in $\ell^2(V,m)\cap \ow{D}$. We  call $\ow{Q}|_{D(\QD)}$ the {\em regular Dirichlet form associated to $(b,c)$} and denote it by $\QD$. It might be thought of being the minimal closed $\ell^2$-restriction of $\ow{Q}$ containing $C_c(V)$. 
\begin{prop} \label{QD}
$\QD$ is a regular Dirichlet form. Its associated operator $\LD$ is a restriction of $\ow{L}$ with domain $D(\LD)$ satisfying
$$D(\LD) = \{u \in D(\QD) \,| \, \ow{L}u \in \ell^2(V,m)\}.$$
\end{prop}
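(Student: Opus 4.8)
The plan is to prove the two assertions of Proposition \ref{QD} in turn: first that $\QD$ is a regular Dirichlet form, then the identification of its generator $\LD$ as a restriction of $\ow{L}$ with the claimed domain.

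\textbf{Regularity and the Dirichlet property.} First I would observe that $\QD$ is closed by construction, being defined as the closure of $C_c(V)$ in $\ell^2(V,m) \cap \ow{D}$ with respect to the form norm $\aV{\cdot}_Q$; one should just check that $\aV{\cdot}_Q$ is genuinely a norm on $\ell^2(V,m)\cap\ow{D}$ and that this space is complete with respect to it, so that taking the closure makes sense and yields a closed form. Regularity in the sense of Dirichlet form theory requires that $C_c(V)$ be dense both in $D(\QD)$ with respect to $\aV{\cdot}_Q$ and in $C_0(V)$ (or rather, in the discrete setting, in the closure of $C_c(V)$ in the sup norm — but on a discrete space $C_c(V)$ is automatically a core for itself in the relevant topology, so this is essentially immediate). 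The first density is true by definition of $D(\QD)$. The Markov property is the real content: I would show that if $C:\RR\to\RR$ is a normal contraction and $u \in D(\QD)$, then $C\circ u \in D(\QD)$ with $\QD(C\circ u)\le\QD(u)$. By Lemma \ref{Markov property} this inequality holds at the level of $\ow{Q}$, and since $\ell^2$-membership is preserved by normal contractions, it remains to check that $C\circ u$ lies in the closure $D(\QD)$ and not merely in $\ow{D}\cap\ell^2(V,m)$. For this I would take $u_n \in C_c(V)$ with $u_n \to u$ in $\aV{\cdot}_Q$; then $C\circ u_n \in C_c(V)$, and one shows $C\circ u_n \to C\circ u$, using that the sequence $(C\circ u_n)$ is $\aV{\cdot}_Q$-bounded (by the contraction inequality) and converges pointwise to $C\circ u$, then invoking a lower-semicontinuity / weak-compactness argument analogous to Theorem \ref{convergence in D} to upgrade pointwise convergence to $\aV{\cdot}_Q$-convergence along a subsequence, which suffices. (In practice it is standard that restrictions of the maximal form to closed Markovian subspaces are again Dirichlet forms; I would cite the appendix.)

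\textbf{Identification of the operator.} For the inclusion $D(\LD) \subseteq \{u \in D(\QD)\mid \ow{L}u \in \ell^2(V,m)\}$, let $u \in D(\LD)$. Exactly as in the proof of Proposition \ref{QN}, for each $x$ we have $(\LD u)(x) = \as{\LD u, \hat\delta_x} = \QD(u,\hat\delta_x) = \ow{Q}(u,\hat\delta_x) = \as{\ow{L}u,\hat\delta_x} = (\ow{L}u)(x)$, where $\hat\delta_x = m(x)^{-1}\delta_x \in C_c(V) \subseteq D(\QD)$ and the middle-to-last equality is Green's formula (Lemma \ref{green}). Hence $\ow{L}u = \LD u \in \ell^2(V,m)$, proving the inclusion and that $\LD$ is a restriction of $\ow{L}$.

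\textbf{The reverse inclusion.} The converse $\{u\in D(\QD)\mid \ow{L}u\in\ell^2(V,m)\}\subseteq D(\LD)$ is the step I expect to be the main obstacle, because it requires showing that for such a $u$, the functional $v\mapsto \ow{Q}(u,v)$ is bounded on $D(\QD)$ with respect to $\aV{\cdot}_2$, i.e. that the Green's formula identity $\QD(u,v) = \as{\ow{L}u,v}$ persists not just for $v\in C_c(V)$ but for all $v \in D(\QD)$. By Lemma \ref{green} the identity holds for $v \in C_c(V)$; to pass to general $v \in D(\QD)$ I would take $v_n \in C_c(V)$ with $v_n\to v$ in $\aV{\cdot}_Q$, so $\QD(u,v_n)\to\QD(u,v)$ by continuity of the form, while $\as{\ow{L}u,v_n}\to\as{\ow{L}u,v}$ because $\ow{L}u\in\ell^2(V,m)$ and $v_n\to v$ in $\ell^2$ (note $\aV{\cdot}_Q$-convergence dominates $\aV{\cdot}_2$-convergence). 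Therefore $\QD(u,v)=\as{\ow{L}u,v}$ for all $v\in D(\QD)$, which says precisely that $u$ is in the domain of the generator and $\LD u = \ow{L}u$. The one subtlety to handle with care is that the approximants $v_n$ must converge to $v$ in the full form norm (which includes both the $\ow{Q}$-part and the $\ell^2$-part), but this is exactly the meaning of $v\in D(\QD) = \overline{C_c(V)}^{\aV{\cdot}_Q}$, so no extra work is needed there.
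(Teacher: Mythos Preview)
Your approach matches the paper's almost exactly: the same weak-compactness idea for the Markov property, the same computation (via Green's formula against $\hat\delta_x$) for $\LD\subseteq\ow{L}$, and the same density argument extending $\QD(u,v)=\as{\ow{L}u,v}$ from $v\in C_c(V)$ to all $v\in D(\QD)$ for the reverse domain inclusion. One small correction on the Markov step: boundedness of $(C\circ u_n)$ together with pointwise (or $\ell^2$) convergence does \emph{not} yield $\aV{\cdot}_Q$-convergence along a subsequence---the direct analogue of Theorem~\ref{convergence in D} would require $\limsup\ow{Q}(C\circ u_n)\le\ow{Q}(C\circ u)$, which you do not have---but it does yield weak convergence to $C\circ u$ in $(\ow{D}\cap\ell^2,\aV{\cdot}_Q)$, and the paper then concludes $C\circ u\in D(\QD)$ because the weak and norm closures of the convex set $C_c(V)$ coincide.
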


\begin{proof}
The closedness of $\QD$  follows from the closedness of $\QN$ and the fact that $D(\QD)$ is a closed subspace of $D(\QN)$ with respect to the form norm $\|\cdot\|_Q$. 

For showing the Markov property of $\QD$ we let $F$ be a normal contraction and $u \in D(\QD)$. It suffices to show $F \circ u \in D(\QD)$. By the definition of $D(\QD)$ there exists a sequence $(u_n)$ in $C_c(V)$ converging towards $u$ with respect to $\aV{\cdot}_Q$. Obviously, we have 
$$\aV{F \circ u_n - F\circ u}_2 \to 0\text{ as } n \to \infty.$$

Furthermore,
$$\limsup _{n \to \infty} \ow{Q}(F \circ u_n) \leq \limsup _{n \to \infty} \ow{Q}(u_n) = \ow{Q}(u).$$
Therefore, the sequence $(F \circ u_n)$ is  bounded within the Hilbert space $(D(\QN), \aV{\cdot}_Q)$. By the weak compactness of balls in Hilbert spaces we conclude that any of its subsequences has a weakly convergent subsequence. Because of the $\ell^2$-convergence of $F \circ u_n$ all the occurring limits must coincide with $F \circ u$. We infer $F\circ u_n \to F\circ u$ weakly in $(D(\QN), \aV{\cdot}_Q)$. This shows that $F \circ u$ belongs to the weak closure of $C_c(V)$ in $(D(\QN), \aV{\cdot}_Q)$. Because $C_c(V)$ is convex, this weak closure coincides with the closure of $C_c(V)$ with respect to $\aV{\cdot}_Q$ and we obtain $F \circ u \in D(\QD)$.

We now come to the statement on the operator $\LD$. With the same arguments as in the proof if Proposition \ref{QN} we can show that $\LD$ is a restriction of $\ow{L}$ and  that its domain satisfies
$$D(\LD) \subseteq \{u \in D(\QD)\,| \, \ow{L}u \in \ell^2(V,m)\}.$$
It remains to prove the opposite inclusion. To this end, we let $v \in \{u \in D(\QD) \,| \, \ow{L}u \in \ell^2(V,m)\}$. By the correspondence of $\QD$ and $\LD$ (see Appendix~\ref{appendix:dirichlet forms}) we need to confirm the validity of 
$$\QD(v,w) = \as{\ow{L}v,w}$$
for all $w \in D(\QD)$. From Lemma \ref{green} we infer that the above equality holds true for all $w \in C_c(V)$. Since $C_c(V)$ is dense in $D(\QD)$ with respect to $\aV{\cdot}_Q$,  it extends to all $w \in D(\QD)$. This finishes the proof.
\end{proof}

\begin{remark}
\begin{itemize}
\item The previous proof and the one of Theorem \ref{convergence in D} use a similar argument. They show that a bounded sequence in a Hilbert space that is convergent in some 'weak' sense (pointwise, in $\ell^2$, ...) is already weakly convergent in that Hilbert space.
\item There seems to be no explicit proof for $\QD$ being a Dirichlet form in the literature.  Usually this property is deduced from \cite[Theorem~3.1.1]{FOT} which uses general principles.  
\item The given characterization of the domain of $\LD$ seems to be new. 
\end{itemize}
\end{remark}

We now come to the main objects of our studies.

\begin{definition}[Dirichlet form associated with $(b,c)$]
Let $(b,c)$ be a graph over $V$. A Dirichlet form $Q$ is called associated to $(b,c)$ if it is a restriction of $\ow{Q}_{b,c}$ and its domain $D(Q)$ satisfies
$$ D(\QD) \subseteq D(Q) \subseteq D(\QN).$$
\end{definition}

\begin{remark} The Dirichlet forms considered above seem to be a special class of examples of Dirichlet forms on $\ell^2(V,m)$. However, as seen in \cite{KL}, it turns out that every regular Dirichlet form on a discrete space coincides with a regular Dirichlet form associated to a graph $(b,c)$ (see also Appendix~\ref{subsection:dirichlet forms of graphs}). In this sense the forms $\QD$ are exactly the regular Dirichlet forms on discrete spaces.
\end{remark}




\section{General theory} \label{general theory}

In this section we study recurrence and transience of Dirichlet forms which are associated to a graph $(b,c)$ by using the theory presented in \cite[Section~1.5]{FOT}. The discrete structure of the underlying $\ell^2$-space allows us to simplify many technical details. Therefore, some definitions and statements differ slightly from the ones found in \cite{FOT}. 


Let $Q$ be a Dirichlet form associated to a graph $(b,c)$ and let $e^{-tL}$ be its associated semigroup. Recall that $e^{-tL}$ is positivity preserving, i.e., it maps positive functions to positive functions (see Appendix~\ref{appendix:dirichlet forms}).

\begin{definition}[Transient semigroup]
The semigroup $e^{-tL}$ is called \emph{transient} if 
$$G(x,y) := \int_{0}^{\infty} e^{-tL}\delta_x (y) dt < \infty, \label{Definition transient semigroup}$$ 
for every $x,y \in V$. It is called \emph{recurrent} if $G(x,y) = \infty$ for all $x,y \in V$. The function $G:V \times V \to [0,\infty]$ is the {\em Green's function} of $Q$. 
\end{definition}

The next proposition shows the dichotomy of the above concepts whenever the graph $(b,c)$ is connected.

\begin{prop}\label{dichotomy}
	Let $(b,c)$ be connected. The semigroup $e^{-tL}$ is transient if and only if there exist some $x,y \in V$ such that $G(x,y) < \infty$. In particular, $e^{-tL}$ is either recurrent or transient. 
\end{prop}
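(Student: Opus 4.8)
The plan is to show that the finiteness of $G(x,y)$ for a single pair of vertices propagates along edges, and then use connectedness to conclude it holds for all pairs. The key technical input is the resolvent identity together with the fact that for a fixed $t$, the heat semigroup $e^{-tL}$ is a bounded, positivity-preserving, sub-Markovian operator on $\ell^2(V,m)$, so that $G$ (the $0$-th order resolvent) inherits monotonicity and can be manipulated via the formal operator $\ow{L}$. Concretely, I would first record that $G(x,y)m(y) = G(y,x)m(x)$ by symmetry of the semigroup with respect to $\langle\cdot,\cdot\rangle$, so finiteness is a symmetric relation in the two arguments; this already gives that the set of ``good'' pairs is symmetric.

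Next I would argue the crucial step: if $G(x_0,x_0) < \infty$ for some $x_0$, then $G(x,x) < \infty$ for every neighbour $x \sim x_0$, and hence — since $(b,c)$ is connected — for every $x \in V$. To see the neighbour step, I would use that $Gf$ solves $\ow{L}(Gf) = f$ in a suitable sense for nice $f$ (this is the defining property of the $0$-order resolvent on the transient part, or can be obtained by a cutoff/approximation argument using the $\alpha$-resolvents $G_\alpha$ and letting $\alpha \downarrow 0$). Applying Green's formula (Lemma \ref{green}) to $u = G\delta_{x_0}$ and test functions supported near $x_0$, together with the explicit form of $\ow{L}$, one gets a linear relation expressing $G(x,x_0)$ in terms of $G(x_0,x_0)$ and the values $G(y,x_0)$ for the other neighbours $y$ of $x_0$; all coefficients are controlled by $b$ and $c$, which are finite by (b2). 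Since all the $G(\cdot,x_0)$ are nonnegative, finiteness of $G(x_0,x_0)$ forces finiteness of $G(x,x_0)$ for each neighbour, and then finiteness of $G(x_0,x_0)$ again by symmetry combined with iterating along a path.

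The main obstacle I anticipate is making the manipulation with $\ow{L}$ rigorous: one must be careful that $G\delta_{x_0}$ actually lies in a domain where $\ow{L}(G\delta_{x_0})$ makes sense and equals $\delta_{x_0}/m(x_0)$ pointwise, and that the series defining $\ow{L}$ at the relevant vertices converges — this is where (b2) and the definition of $\ow{F}$ enter. An alternative, perhaps cleaner route that avoids $\ow{L}$ entirely is to work directly with the semigroup: use $e^{-tL}\delta_{x_0}(y) = e^{-tL}\delta_y(x_0)$ and the pointwise bound coming from comparing $\frac{d}{dt}e^{-tL}\delta_{x_0}(x_0)$ with the off-diagonal values via the ODE $\frac{d}{dt}(e^{-tL}\delta_{x_0})(x) = -(\ow{L}\,e^{-tL}\delta_{x_0})(x)$, then integrate in $t$. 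Either way the arithmetic is routine once the domain issues are dispatched.

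Finally, for the ``In particular'' clause I would simply note that the contrapositive of what was just proved says: if $G(x,y) = \infty$ for some (equivalently, by the connectedness propagation, for all) pairs fails, then $G(x,y) < \infty$ for all pairs, which is exactly the dichotomy between recurrence and transience as defined in Definition \ref{Definition transient semigroup}. I would present the propagation lemma as the heart of the argument and keep the symmetry observation and the final dichotomy as short remarks.
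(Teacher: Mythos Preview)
Your route is genuinely different from the paper's. The paper does not touch $\ow{L}$ or propagate along edges at all: it uses the semigroup property together with the fact that $e^{-tL}$ is \emph{positivity improving} on a connected graph (Theorem~\ref{positivity improving}). Concretely, for $t>1$ one writes
\[
(e^{-tL}\delta_x)(y)\;\ge\;\tfrac{1}{m(y)}\,\langle e^{-L}\delta_x,\ow{\delta}_w\rangle\,\langle e^{-(t-1)L}\ow{\delta}_w,\delta_y\rangle
\]
by expanding $e^{-L}\delta_x$ in the orthonormal basis $\ow{\delta}_v$ and dropping all but one nonnegative term. Since $\langle e^{-L}\delta_x,\ow{\delta}_w\rangle>0$ by positivity improving, integrating in $t$ shows $G(x,y)<\infty\Rightarrow G(w,y)<\infty$; a second application gives $G(w,z)<\infty$. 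This is a two-line Chapman--Kolmogorov argument with no domain issues to chase.

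Your edge-propagation idea can be made to work, and your suggested fix via the $\alpha$-resolvents is the right way to make $\ow{L}(G\delta_{x_0})=\delta_{x_0}$ rigorous (pass to the limit in $(\ow{L}+\alpha)G_\alpha\delta_{x_0}=\delta_{x_0}$ using monotone convergence, which is legitimate because $G_\alpha\delta_{x_0}\in D(L)$ and $L$ restricts $\ow{L}$). But two points in your sketch need repair. First, the hypothesis is $G(x,y)<\infty$ for \emph{some} pair, not $G(x_0,x_0)<\infty$; you never reduce to the diagonal case. Second, your iteration ``$G(x_0,x_0)<\infty\Rightarrow G(x,x_0)<\infty$ for $x\sim x_0\Rightarrow G(x,x)<\infty$'' does not follow: the equation $\ow{L}(G\delta_{x_0})=\delta_{x_0}$ only relates values of $G(\,\cdot\,,x_0)$, so it lets you propagate finiteness in the \emph{second} variable with the first fixed, not diagonal to diagonal. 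The clean statement is: if $G(x,y)<\infty$, evaluate $\ow{L}(G\delta_x)$ at $y$ (the equation there reads $\sum_z b(y,z)G(x,z)=\deg(y)\,G(x,y)$ when $y\neq x$, with an extra $-m(x)$ when $y=x$), conclude $G(x,z)<\infty$ for all $z\sim y$, and iterate along a path to get $G(x,w)<\infty$ for every $w$; then symmetry plus a second run of the same argument gives all pairs. Once you organise it this way the proof is fine, but it is longer and more delicate than the paper's, which bypasses all of this by invoking positivity improving once.
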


\begin{proof}
Let $x,y \in V$ such that $G(x,y) < \infty$ and let $w,z\in V$ be  arbitrary. We need to show $G(w,z) < \infty$. The functions $\ow{\delta}_v = m(v)^{-1/2}\delta_v$ form an orthonormal basis in $\ell^2(V,m)$. Using the semigroup property and that $e^{-tL}$ is positivity preserving (see Appendix~\ref{appendix:dirichlet forms}) we then obtain for $t > 1$
\begin{align}
	(e^{-tL}\delta_x)(y) &= (e^{-(t-1) L}e^{- L}\delta_x)(y) \notag\\
	&= \left[e^{-(t-1) L}\left(\sum_{v \in V} \as{e^{- L}\delta_x, \ow{\delta}_v}\ow{\delta}_v\right)\right](y)\notag\\
	&=\frac{1}{m(y)}\sum_{v \in V} \as{e^{- L}\delta_x, \ow{\delta}_v}\as{e^{-(t-1) L}\ow{\delta}_v,\delta_y}\notag\\
	&\geq \frac{1}{m(y)}\as{e^{- L}\delta_x, \ow{\delta}_w}\as{e^{-(t-1) L}\ow{\delta}_w,\delta_y}. \label{inequality 1}
\end{align}

Since $(b,c)$ is connected, Theorem \ref{positivity improving} shows that $e^{-tL}$ is positivity improving, hence
$$\as{e^{- L}\delta_x, \ow{\delta}_w} > 0.$$
We can now conclude the finiteness of $G(w,y)$ by integrating both sides of inequality \eqref{inequality 1} from $1$ to $\infty$. A similar computation shows the finiteness of $G(w,z)$. Now the 'In particular' part is an immediate consequence of the previous. 
\end{proof}
The next proposition shows how transience of the semigroup is related to the resolvent $(L+\alpha)^{-1}$ associated with $Q$.
\begin{prop} \label{correspondence resolvent G}
For all $x,y \in V$ the equality
$$G(x,y) = \int_{0}^{\infty} e^{-tL}\delta_x (y) dt = \lim_{\alpha \to 0+} (L+\alpha)^{-1}\delta_x (y) $$
holds. 
\end{prop}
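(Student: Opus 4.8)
The plan is to express both sides via the spectral theorem applied to the non-negative self-adjoint operator $L$ associated with $Q$. For a fixed $x \in V$, write $f = \delta_x \in \ell^2(V,m)$, and let $E_\lambda$ denote the spectral resolution of $L$. Then for each $\alpha > 0$ one has the operator identity $(L+\alpha)^{-1} = \int_{[0,\infty)} (\lambda+\alpha)^{-1} \dd E_\lambda$, and for the semigroup $e^{-tL} = \int_{[0,\infty)} e^{-t\lambda} \dd E_\lambda$. I would first establish, using Tonelli's theorem for the (positive) measure $\dd\langle E_\lambda f, f\rangle$ together with the elementary identity $\int_0^\infty e^{-t\lambda}\dd t = 1/\lambda$ (valid for $\lambda > 0$, and equal to $+\infty$ for $\lambda = 0$), that
$$\int_0^\infty \langle e^{-tL} f, g\rangle \dd t = \lim_{\alpha \to 0+} \langle (L+\alpha)^{-1} f, g\rangle$$
for non-negative $f,g$ — in particular for $f = \delta_x$, $g = \delta_y$ — where both sides may a priori be $+\infty$. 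This is just monotone convergence: $(\lambda+\alpha)^{-1} \uparrow 1/\lambda$ as $\alpha \downarrow 0$.

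The point that requires a little care is moving the $t$-integral inside the inner product, i.e. justifying $\int_0^\infty \langle e^{-tL}\delta_x, \delta_y\rangle \dd t = \langle \int_0^\infty e^{-tL}\delta_x \dd t, \delta_y\rangle$ when the left-hand side is finite, and more basically making sense of the pointwise integrand $e^{-tL}\delta_x(y) = m(y)^{-1}\langle e^{-tL}\delta_x, \delta_y\rangle$. Since $e^{-tL}$ is positivity preserving (property (S5) in the appendix), the integrand $t \mapsto e^{-tL}\delta_x(y)$ is non-negative, so $\int_0^\infty e^{-tL}\delta_x(y)\dd t$ is a well-defined element of $[0,\infty]$ with no convergence subtleties; this is exactly $G(x,y)$ as defined. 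Then Tonelli applied to the positive measure $\dd t \otimes \dd\langle E_\lambda \delta_x, \delta_x\rangle$ gives
$$\int_0^\infty e^{-tL}\delta_x(y)\dd t = \frac{1}{m(y)}\int_0^\infty \int_{[0,\infty)} e^{-t\lambda}\dd\langle E_\lambda \delta_x,\delta_y\rangle \dd t,$$
and one would split $\langle E_\lambda\delta_x,\delta_y\rangle$ into its positive and negative parts (or simply note that the double integral of $e^{-t\lambda}$ against any finite signed measure is absolutely convergent once we know the positive-part integral is finite, handling the $+\infty$ case separately) before swapping the order of integration.

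After the swap, the inner $t$-integral produces $1/\lambda$, so $\int_0^\infty e^{-tL}\delta_x(y)\dd t = m(y)^{-1}\int_{[0,\infty)} \lambda^{-1}\dd\langle E_\lambda\delta_x,\delta_y\rangle$, with the convention that the integrand is $+\infty$ at $\lambda = 0$; when $E_{\{0\}}\delta_x \neq 0$ and $\langle E_{\{0\}}\delta_x, \delta_y\rangle \neq 0$ both sides are $+\infty$ and the identity holds trivially (here positivity of the semigroup and connectedness, via Proposition \ref{dichotomy}, keep the dichotomy clean, though strictly we don't need it). On the other hand, $\lim_{\alpha\to 0+}(L+\alpha)^{-1}\delta_x(y) = m(y)^{-1}\lim_{\alpha\to 0+}\int_{[0,\infty)}(\lambda+\alpha)^{-1}\dd\langle E_\lambda\delta_x,\delta_y\rangle$, and the monotone convergence theorem (again after a positive/negative decomposition, using that $(\lambda+\alpha)^{-1}$ increases to $\lambda^{-1}$) identifies this with the same expression. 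Comparing the two gives the claimed equality.

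The main obstacle is purely the bookkeeping around $+\infty$-valued integrals and the non-positivity of the signed spectral measure $\langle E_\lambda\delta_x,\delta_y\rangle$ when $x \neq y$: one must be careful to apply Tonelli only to genuinely positive integrands and to phrase the final identity as one between elements of $[0,\infty]$. A clean way to sidestep the sign issue entirely is to first prove the identity for $x = y$ (where the spectral measure $\langle E_\lambda\delta_x,\delta_x\rangle$ is positive, so Tonelli and monotone convergence apply directly), and then transfer to general $x,y$ using positivity of $e^{-tL}$ and of the resolvent together with the dichotomy in Proposition \ref{dichotomy} — exactly the argument already used there — so that both sides are simultaneously finite or infinite and, when finite, the swap of $\int_0^\infty \dd t$ and $\langle \cdot,\delta_y\rangle$ is justified by dominated convergence against the integrable majorant coming from the $x=x$ case.
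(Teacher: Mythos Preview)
Your argument is correct and uses the same spectral-theoretic ingredients as the paper, but the order of operations differs in a way worth noting. The paper first applies monotone convergence to the \emph{positive} integrand $e^{-tL}\delta_x(y)$ to write
\[
\int_0^\infty e^{-tL}\delta_x(y)\dd t \;=\; \lim_{\alpha\to 0+}\int_0^\infty e^{-t\alpha}\,e^{-tL}\delta_x(y)\dd t,
\]
and only then, with $\alpha>0$ fixed, invokes Fubini for the signed spectral measure $\mu = \as{E_\lambda\delta_x,\delta_y}$: since $\mu$ has bounded variation and $e^{-t(\lambda+\alpha)}$ is integrable in $t$ uniformly in $\lambda\geq 0$, the double integral is absolutely convergent and the swap is immediate, yielding $(L+\alpha)^{-1}\delta_x(y)$ directly. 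Your route attempts the swap at $\alpha=0$ and then has to manage the possible divergence together with the sign of $\as{E_\lambda\delta_x,\delta_y}$, which forces the positive/negative decomposition and the detour through the diagonal case $x=y$ plus Proposition~\ref{dichotomy}. Both work, but the paper's ``damp first, swap second'' trick buys a one-line Fubini with no sign bookkeeping and no appeal to the dichotomy.
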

\begin{proof}
By monotone convergence we infer
$$\int_{0}^{\infty} e^{-tL}\delta_x (y) dt = \lim_{\alpha \to 0+}\int_{0}^{\infty} e^{-t\alpha}e^{-tL}\delta_x (y) dt.$$
Let $\mu_{x,y}$ be the spectral measure associated with $L$ such that 
$$\frac{1}{m(y)}\as{e^{-tL}\delta_x,\delta_y} = \int_0^\infty e^{-t\lambda}d\mu_{x,y} (\lambda).$$
Since $e^{-t(\alpha + \lambda)}$ is integrable in $t$ on $[0,\infty)$ and $\mu_{x,y}$ is of bounded total variation, we can use Fubini's theorem to obtain
\begin{align*}
\int_{0}^{\infty} e^{-t\alpha}e^{-tL}\delta_x (y) dt &= \int_{0}^{\infty}\int_0^\infty e^{-t\alpha} e^{-t\lambda}d\mu_{x,y} (\lambda) dt\\
&=  \int_{0}^{\infty}\int_0^\infty e^{-t(\alpha+\lambda)} dt d\mu_{x,y} (\lambda) \\
&=  \int_{0}^{\infty} \frac{1}{\lambda + \alpha} d\mu_{x,y}(\lambda)\\
&= (L + \alpha)^{-1}\delta_x (y).
\end{align*}
This finishes the proof.
\end{proof}

There is also a notion of recurrence and transience for Dirichlet forms. This is discussed next. 

\begin{definition}[Transient Dirichlet form]
The Dirichlet form $Q$ is called \emph{transient} if there exists a strictly positive $g \in \ell^1(V,m)\cap \ell^{\infty}(V)$ such that the inequality
\[ \sum_{x\in V} |u(x)|g(x)m(x) \leq \sqrt{Q(u)}, \label{Definition transient space} \] 
holds for all $u\in D(Q)$. Such a function $g$ is called \emph{reference function} for $Q$.  
\end{definition}
The semigroup $(e^{-tL})$ is a strongly continuous family of bounded operators on $\ell^2(V,m)$. Thus, the real function $t \mapsto e^{-tL}f(x)$ is continuous for any $f\in \ell^2(V,m)$ and any $x \in V$. We abuse notation and introduce the {\em 0-th order resolvent operator} 
$$G: \ell^{2}_+ (V,m) \to \{u:V \to [0,\infty] \}$$
acting by
$$ (Gf)(x) =  \int_{0}^{\infty} e^{-tL}f (x) dt.$$
Here $\ell^2_+(V,m)$ denotes the set of all non-negative $\ell^2$-functions. Note that $(Gf)(x)$ may take the value $\infty$. 
\begin{remark} \label{remark:green functiona and resolvent}
 It is immediate from the definition that the Green's function of $Q$ satisfies $G(x,y) = (G\delta_x)(y)$. In this sense it is the integral kernel of the 0-th order resolvent operator with respect to the counting measure.  
\end{remark}
 The following lemma will allow us to prove the equivalence of Definition~\ref{Definition transient semigroup} and of Definition~\ref{Definition transient space}, it is a variant of \cite[Lemma~1.5.3]{FOT}. 
\begin{lemma} \label{Main} Let $g\in \ell^1(V,m)\cap \ell^2(V,m)$ be nonnegative. Then,
$$\sup_{u\in D(Q)} \frac{\as{|u|,g }}{\sqrt{Q(u)}} =  \sqrt{\as{g,Gg}}.$$
In particular,
$$\sup_{u\in D(Q)} \frac{\as{|u|,g }}{\sqrt{Q(u)}} < \infty \text{ if and only if }  \as{g,Gg} < \infty.$$ 
\end{lemma}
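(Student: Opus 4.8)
The plan is to recognize $\as{|u|,g}$ as a linear functional on the Hilbert space $(D(Q), \aV{\cdot}_Q)$ (or rather its extended/transient version) and compute its norm via duality, with the Riesz representative turning out to be essentially $Gg$. Concretely, first I would observe that $u \mapsto \as{|u|,g}$ is finite for $u \in C_c(V)$, and since every normal contraction decreases $Q$ and $|u| \le |v|$ pointwise for $v = |u|$ with $Q(|u|) \le Q(u)$, it suffices (after replacing $u$ by $|u|$) to take the supremum over nonnegative $u \in D(Q)$; so the left-hand side equals $\sup\{\as{u,g} : u \in D(Q),\, u \ge 0,\, Q(u) \le 1\}$, and by scaling this is the operator norm of the functional $u \mapsto \as{u,g}$ restricted to the positive cone. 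The key computation is then to identify, for $\alpha > 0$, the function $G_\alpha g := (L+\alpha)^{-1}g$ as the element of $D(Q)$ satisfying $Q_\alpha(G_\alpha g, u) = \as{g,u}$ for all $u \in D(Q)$, where $Q_\alpha = Q + \alpha\as{\cdot,\cdot}$; this is the standard resolvent identity for Dirichlet forms (see appendix). From this, Cauchy–Schwarz in the $Q_\alpha$-inner product gives $\as{g,u} \le \sqrt{Q_\alpha(G_\alpha g)}\sqrt{Q_\alpha(u)}$, and $Q_\alpha(G_\alpha g) = \as{g, G_\alpha g}$, so $\as{g,u} \le \sqrt{\as{g,G_\alpha g}}\,\sqrt{Q(u) + \alpha\aV{u}_2^2}$.

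Next I would let $\alpha \to 0+$. By Proposition \ref{correspondence resolvent G} and monotone convergence, $\as{g, G_\alpha g} = \int_0^\infty e^{-t\alpha}\as{g, e^{-tL}g}\,\dd t \nearrow \as{g,Gg}$ as $\alpha \downarrow 0$ (using that $\as{g,Gg} = \int_0^\infty \as{g,e^{-tL}g}\dd t$, which is legitimate since $g \in \ell^2$ and everything is nonnegative, $e^{-tL}$ being positivity preserving). Meanwhile $\alpha \aV{u}_2^2 \to 0$ for fixed $u \in D(Q) \subseteq \ell^2(V,m)$. Hence $\as{g,u} \le \sqrt{\as{g,Gg}}\,\sqrt{Q(u)}$ for every $u \in D(Q)$, which after replacing $u$ by $|u|$ (and using $\as{|u|,g} = \as{\,|u|,g\,}$, $Q(|u|) \le Q(u)$) gives the inequality ``$\le$'' in the lemma. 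This direction also covers the degenerate case: if $\as{g,Gg} = \infty$ there is nothing to prove for ``$\le$''.

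For the reverse inequality ``$\ge$'', I would test with the resolvent functions themselves: plug $u = G_\alpha g \in D(Q)$ into the left-hand supremum. Since $g \ge 0$ and the resolvent is positivity preserving, $G_\alpha g \ge 0$, so $\as{|G_\alpha g|, g} = \as{G_\alpha g, g} = Q_\alpha(G_\alpha g) \ge Q(G_\alpha g)$, while $Q(G_\alpha g) \le Q_\alpha(G_\alpha g) = \as{g, G_\alpha g}$. Therefore
\[
\frac{\as{|G_\alpha g|,g}}{\sqrt{Q(G_\alpha g)}} \ge \frac{\as{g,G_\alpha g}}{\sqrt{\as{g, G_\alpha g}}} = \sqrt{\as{g,G_\alpha g}},
\]
and letting $\alpha \to 0+$ the right-hand side increases to $\sqrt{\as{g,Gg}}$, giving $\sup_{u \in D(Q)} \as{|u|,g}/\sqrt{Q(u)} \ge \sqrt{\as{g,Gg}}$ — including the case where this is $+\infty$. (A small point: one must check $Q(G_\alpha g) > 0$ so the quotient makes sense; if $Q(G_\alpha g) = 0$ then by connectedness $G_\alpha g$ is constant, and being in $\ell^2$ it is $0$, forcing $g = \alpha^{-1}(L+\alpha)(G_\alpha g) $... so $g=0$ and the whole identity is trivial.)

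The main obstacle I anticipate is the careful justification of the interchange of limit and integral in $\as{g, G_\alpha g} \to \as{g,Gg}$ and the identification $\as{g,Gg} = \int_0^\infty \as{g,e^{-tL}g}\dd t$ when this quantity may be infinite — this is where one leans on positivity preservation of $e^{-tL}$ and Tonelli/monotone convergence rather than any integrability. Everything else is a clean Cauchy–Schwarz argument in the form $Q_\alpha$, combined with the resolvent characterization of $G_\alpha$ from the appendix.
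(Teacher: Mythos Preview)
Your argument is correct and takes a genuinely different route from the paper. The paper works with the truncated time integrals $S_t g = \int_0^t e^{-sL}g\,ds$: in Step~1 it establishes the identity $Q(S_t g, u) = \as{g - e^{-tL}g, u}$ by showing $LS_tg = g - e^{-tL}g$ directly, then in Steps~2 and~3 it tests with $S_t g$ and lets $t\to\infty$, which in the ``$\le$'' direction also requires proving that $\as{e^{-tL}g,g}\to 0$ when $\as{g,Gg}<\infty$. You instead use the resolvents $G_\alpha g = (L+\alpha)^{-1}g$ and the ready-made identity $Q_\alpha(G_\alpha g, \cdot) = \as{g,\cdot}$ from the appendix, then let $\alpha\downarrow 0$ via the Laplace transform and monotone convergence. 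Your route is shorter: it avoids building the $S_t$-calculus from scratch and bypasses the decay argument for $\as{e^{-tL}g,g}$, at the cost of invoking Proposition~\ref{correspondence resolvent G} (or the analogous Tonelli computation) to identify $\lim_{\alpha\to 0}\as{g,G_\alpha g}$ with $\as{g,Gg}$.

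One small correction: your parenthetical handling of the degenerate case $Q(G_\alpha g)=0$ appeals to connectedness, which is neither assumed in the lemma nor the right mechanism. The clean fix is: if $g\neq 0$ then $G_\alpha g\neq 0$ (else $g=(L+\alpha)0=0$), hence $\as{g,G_\alpha g}=Q_\alpha(G_\alpha g)\ge \alpha\aV{G_\alpha g}_2^2>0$; so if $Q(G_\alpha g)=0$ the test quotient is $+\infty$, and your already-proved ``$\le$'' direction then forces $\as{g,Gg}=\infty$ as well.
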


\begin{proof}
For $f \in \ell^2(V,m)$ and $x \in V$ we use the notation
$$(S_tf)(x) := \int_0^t e^{-sL}f(x)ds.$$
With the help of Theorem \ref{integral inequality} we conclude 
$$\aV{S_tf}_2 \leq \int_0^t \aV{e^{-sL}f}_2ds \leq t \aV{f}_2.$$
Therefore, $S_tf$ is a bounded linear operator on $\ell^2(V,m)$. The proof will be done in three steps.

\emph{Step 1}: For $f \in \ell^2(V,m)$ and $h \in D(Q)$ we prove the identity
\begin{align} 
	Q(S_tf, h) = \as{f-e^{-tL}f,h} \label{Formeq}
\end{align}
by showing that $S_tf \in D(L)$ and $LS_tf = f-e^{-tL}f$. Using the correspondence of $L$ and  $e^{-tL}$ we need to compute the derivative of the function $[0,\infty) \to \ell^2(V,m)$, $s \mapsto -e^{-sL}S_tf$ at 0 (see Appendix~\ref{appendix:dirichlet forms}). Theorem \ref{vector integration bounded operator} yields
\begin{align*}
 \frac{S_tf-e^{-sL}S_tf}{s} &= \frac{S_tf-S_te^{-sL}f}{s}\\
 &= \frac{1}{s}\left(\int_0^s e^{-uL}f  du - \int_{t}^{s+t}e^{-uL}fdu\right).
\end{align*}
With the help of Theorem \ref{integral inequality} and a mean value theorem for Riemann integrals we compute
\begin{align*}
\aV{\frac{1}{s}\int_0^s e^{-uL}f  du - f}_2 &= \aV{\frac{1}{s}\int_0^s e^{-uL}f-f  du }_2\\
&\leq \frac{1}{s}\int_0^s \aV{e^{-uL}f-f}_2  du \\
&= \aV{e^{-\theta L}f-f}_2
\end{align*}
where $\theta \in (0,s).$ Taking the limit $s \to 0$  yields
$$\frac{1}{s}\int_0^s e^{-uL}f  du \to  f.$$
An analogous computation shows 
$$\frac{1}{s}\int_{t}^{s+t}e^{-uL}fdu \to e^{-tL}f \text{ as } s \to 0.$$
This implies Equation~\eqref{Formeq}.

\emph{Step 2}: We show that
$$c:= \sup_{u\in D(Q)} \frac{\as{|u|,g}}{\sqrt{Q(u)}} < \infty.$$ 
implies $\sqrt{\as{g,Gg}} \leq c$.  Since $e^{-tL}g\geq 0$ and $S_tg \geq 0$ we obtain
$$
\as{g,S_tg} \leq c\sqrt{Q(S_tg)} \overset{\eqref{Formeq}}{=} c \sqrt{\as{g-e^{-tL}g,S_tg}} \leq c\sqrt{\as{g,S_tg}}.$$
We conclude $\sqrt{\as{S_t g,g}} \leq c$. Letting $t \to \infty$ we obtain $\sqrt{\as{g,Gg}} \leq c$ by the monotone convergence theorem.

\emph{Step 3}: Suppose that $\as{g,Gg} < \infty$. By monotone convergence and semigroup properties, we conclude 
\begin{align*}
\as{g,Gg}	&= \int_0^{\infty} \as{e^{-t L}g,g} dt\\
	&= \int_0^{\infty} \as{e^{-t/2 L}g,e^{-t/2 L}g} dt.
\end{align*}
The function  $ t \mapsto \as{e^{-t/2 L}g,e^{-t/2 L}g}$ is nonincreasing in $t$ (use the semigroup property and  $\aV{e^{-sL}}\leq 1$). Thus, the finiteness of $\as{g,Gg}$ yields 
 $$ \lim_{t\to \infty} \as{e^{-t L}g,g} =  \lim_{t\to \infty} \as{e^{-t/2 L}g,e^{-t/2 L}g}= 0.$$
On account of \eqref{Formeq} for $u \in D(Q)$ we obtain 
\begin{align*}
	\as{|u|,g} &\overset{\eqref{Formeq}}{=} Q(|u|,S_tg) + \as{|u|,e^{-tL}g} \\
	&\leq \sqrt{Q(S_tg)}\sqrt{Q(|u|)} + \sqrt{\as{e^{-tL}g,e^{-tL}g}} \sqrt{\as{u,u}} \\
	&\overset{\eqref{Formeq}}{=} \sqrt{\as{g-e^{-tL}g,S_tg}}\sqrt{Q(|u|)} + \sqrt{\as{e^{-tL}g,e^{-tL}g}} \sqrt{\as{u,u}}\\
	&\leq \sqrt{\as{S_tg,g}}\sqrt{Q(u)} + \sqrt{\as{e^{-2tL}g,g}} \sqrt{\as{u,u}}.
\end{align*}
The inequality $\as{|u|,g} \leq \sqrt{\as{g,Gg}}\sqrt{Q(u)}$ follows by taking the limit $t\to \infty$ in the above computations. 
\end{proof}

The next theorem shows that transience of the Dirichlet form $Q$ and of its associated semigroup coincide. It is a variant of \cite[Theorem 1.5.1]{FOT}.

\begin{theorem} \label{Transience}
The Dirichlet form $Q$ is transient if and only if its associated semigroup $e^{-tL}$ is transient.
\end{theorem}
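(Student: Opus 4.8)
The plan is to use Lemma \ref{Main} as a bridge between the two notions of transience. That lemma identifies the optimal constant in the inequality of Definition \ref{Definition transient space} with $\sqrt{\as{g,Gg}}$, and finiteness of $\as{g,Gg}$ is in turn controlled by finiteness of the Green function values $G(x,y) = (G\delta_x)(y)$ from Definition \ref{Definition transient semigroup}.

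\emph{$Q$ transient $\Rightarrow$ $e^{-tL}$ transient}. Let $g$ be a reference function for $Q$. From $g \in \ell^1(V,m)\cap\ell^\infty(V)$ we get $\aV{g}_2^2 \le \aV{g}_\infty \aV{g}_1 < \infty$, so $g \in \ell^1(V,m)\cap\ell^2(V,m)$ and Lemma \ref{Main} applies: $\sqrt{\as{g,Gg}} = \sup_{u \in D(Q)} \as{|u|,g}/\sqrt{Q(u)} \le 1$. In particular $\as{g,Gg} = \sumv{x} g(x)(Gg)(x)m(x) < \infty$, and since $g(x)m(x) > 0$ for all $x$, this forces $(Gg)(x) < \infty$ for every $x \in V$. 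Now fix $x,y \in V$. The function $g - g(x)\delta_x$ is nonnegative and lies in $\ell^2(V,m)$, so positivity preservation of $e^{-tL}$ (property (S5)) gives $e^{-tL}g \ge g(x)\,e^{-tL}\delta_x$ pointwise for every $t \ge 0$; integrating over $t \in (0,\infty)$ yields $(Gg)(y) \ge g(x)\,G(x,y)$, whence $G(x,y) \le (Gg)(y)/g(x) < \infty$. Hence $e^{-tL}$ is transient.

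\emph{$e^{-tL}$ transient $\Rightarrow$ $Q$ transient}. Here the task is to build a single strictly positive reference function from the separate finiteness statements $G(x,y)<\infty$. Fix an enumeration $V = \{x_1,x_2,\dots\}$ and set $g = \sum_{n\ge 1} a_n \delta_{x_n}$, choosing the coefficients $a_n > 0$ recursively so that, given $a_1,\dots,a_{n-1}$, the number $a_n$ is small enough to satisfy $a_n \le 1$, $a_n m(x_n) \le 2^{-n}$, and
$$a_n^2\, m(x_n) G(x_n,x_n) + a_n \sum_{k<n} a_k\bigl(m(x_n) G(x_k,x_n) + m(x_k) G(x_n,x_k)\bigr) \le 2^{-n}.$$
This is possible because all the $G(x_i,x_j)$ and $m(x_i)$ occurring are finite. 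Then $g$ is strictly positive with $\aV{g}_1 \le 1$, $\aV{g}_\infty \le 1$ and $\aV{g}_2^2 \le \aV{g}_1 \le 1$. Moreover, writing $\as{g,Gg} = \sumv{x} g(x)(Gg)(x) m(x) = \sum_{i,j} a_i a_j m(x_i) G(x_j,x_i)$ (the identity $(Gg)(x) = \sum_j a_j G(x_j,x)$ following from monotone convergence and positivity preservation) and grouping the pairs $(i,j)$ by $\max\{i,j\}$, the displayed quantity above is precisely the contribution of the block $\max\{i,j\}=n$, so $\as{g,Gg} \le \sum_{n\ge 1} 2^{-n} = 1 < \infty$. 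By Lemma \ref{Main}, $\as{|u|,g} \le \sqrt{\as{g,Gg}}\,\sqrt{Q(u)} \le \sqrt{Q(u)}$ for all $u \in D(Q)$, so $g$ is a reference function and $Q$ is transient.

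The easy direction is the first one; the main obstacle is the construction in the second direction, namely passing from the pointwise finiteness of $G$ to one function $g$ lying in $\ell^1(V,m)\cap\ell^\infty(V)$, strictly positive, with $\as{g,Gg}<\infty$. The recursive choice above resolves this: at stage $n$ only finitely many already-determined quantities constrain $a_n$, so it can be taken as small as needed, while the geometric budget $2^{-n}$ keeps the full double sum summable.
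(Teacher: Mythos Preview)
Your proof is correct and, like the paper's, hinges entirely on Lemma \ref{Main}. The first direction is essentially identical to the paper's argument (you compare $g$ with $g(x)\delta_x$ pointwise where the paper uses self-adjointness of $e^{-tL}$, but the effect is the same). The second direction differs in how the reference function is built: the paper applies Lemma \ref{Main} to each $\delta_x$ separately, obtaining $|u(x)|m(x) \le \sqrt{G(x,x)m(x)}\,\sqrt{Q(u)}$, and then takes $g(x) = a_x/(G(x,x)m(x))$ with $\sum a_x = 1$ so that the individual bounds combine into a single reference inequality---only the diagonal values $G(x,x)$ are needed, and no recursion. Your construction instead forces $\as{g,Gg}\le 1$ directly by controlling the full double sum $\sum_{i,j} a_i a_j m(x_i)G(x_j,x_i)$ block by block. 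Both are valid; the paper's route is shorter and uses less information about $G$, while yours shows more explicitly that one can always manufacture a $g$ with finite $\as{g,Gg}$ from pointwise finiteness of the Green function.
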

\begin{proof}
Assume $Q$ is transient and let $g \in \ell^1(V,m) \cap \ell^\infty(V) \subseteq \ell^2(V,m)$ be a reference function of $Q$. On account of 
$$\sum_{x \in V} |u(x)|g(x)m(x) \leq \sqrt{Q(u)} \text{ for } u \in D(Q),$$
Lemma \ref{Main} shows that $\as{g,Gg} \leq 1$ must hold. Since reference functions are strictly positive this implies $Gg(x) < \infty$ for all $x\in V$. Therefore, the non-negativity of $e^{-tL}\delta_x$ and the self-adjointness of $e^{-tL}$ yield
\begin{align*}
	\int_0^\infty e^{-tL}\delta_x (y) dt &= \frac{1}{g(y)m(y) }\int_0^\infty   \as{e^{-tL}\delta_x, g(y)\delta_y} dt \\
	&\leq \frac{1}{g(y)m(y) }\int_0^\infty   \as{e^{-tL}\delta_x, g} dt\\
	&= \frac{m(x)}{g(y)m(y) }\int_0^\infty   e^{-tL} g(x) dt < \infty.
\end{align*}

Now assume that $e^{-tL}$ is transient, i.e., its Green's function satisfies $G(x,y) < \infty$ for all $x,y \in V$. By Remark~\ref{remark:green functiona and resolvent} the equality $$G(x,y) = G\delta_x (y)$$ holds for all $x,y \in V$.  Hence, an application of Lemma \ref{Main} to $\delta_x$ yields
$$
	\sup_{u\in D(Q)} \frac{|u(x)|m(x)}{\sqrt{Q(u)}} = G(x,x)m(x). 
$$
Since $D(Q)$ is dense in $\ell^2(V,m)$, for every $x \in V$ there exists a function $v_x \in D(Q)$ such that $|v_x(x)| >0$. Therefore the above equation shows $G(x,x) > 0$ for every $x \in V$. We need to find a reference function $g$ as in Definition \ref{Definition transient space}. Let us define $g$ as
$$
	g(x) = \frac{a_x}{G(x,x)m(x)},
$$
where $(a_x)$ is a sequence of strictly positive numbers, chosen such that $g$ belongs to $\ell^1(V,m)\cap \ell^{\infty}(V)$ and \[\sum_{x\in V} a_x = 1.\]
We obtain for $u\in D(Q)$
\begin{align*}
\sum_{x \in V} |u(x)|g(x)m(x) &=  \sum_{x \in V}  \frac{|u(x)|a_x}{G(x,x)}\\
& \leq \sum_{x\in V} a_x \sqrt{Q(u)} \\
&=\sqrt{Q(u)},
\end{align*}
as was to be shown.
\end{proof}

As a next step we introduce the extended Dirichlet space $D(Q)_e$ associated with $Q$. It will turn out that properties of $D(Q)_e$ are related to transience of $Q$. 

\begin{definition}[Extended Dirichlet space] \label{extended space} The set $D(Q)_e$ 
\begin{align*}
D(Q)_e := \{u: V \to \RR\,|\, &\text{ there exists a } Q\text{-Cauchy sequence }\\ &(u_n) \text{ in } D(Q)   \text{ s.t. } u_n \to u \text{ pointwise} \},
\end{align*}
is called the \emph{extended Dirichlet space} of $Q$. A sequence $(u_n)$ as in the defintion of $D(Q)_e$ set is an \emph{approximating sequence} for $u$.
\end{definition}

We extend the Dirichlet form $Q$ to its extended space. This can be done via the next lemma.

\begin{lemma} \label{extended form}
Let $u \in D(Q)_e$ and let $(u_n)$ in $D(Q)$ be an approximating sequence for $u$. Then, $u \in \ow{D}$ and
$$\lim_{n \to \infty}\ow{Q}(u-u_n) = 0.$$
\end{lemma}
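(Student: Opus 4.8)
The plan is to exploit the lower semicontinuity of $\ow{Q}$ with respect to pointwise convergence, exactly in the spirit of the completeness argument in Proposition \ref{structure of D}(b).

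First I would record that, since $Q$ is by definition a restriction of $\ow{Q}$, the hypothesis that $(u_n)$ is $Q$-Cauchy means precisely $\ow{Q}(u_n - u_m) \to 0$ as $n,m \to \infty$. Now fix $m$. Since $u_n \to u$ pointwise, we also have $u_n - u_m \to u - u_m$ pointwise; because $\ow{Q}$ is a series of nonnegative terms, each continuous under pointwise convergence, Fatou's lemma (for series) yields
$$\ow{Q}(u - u_m) \le \liminf_{n\to\infty} \ow{Q}(u_n - u_m).$$
Given $\varepsilon > 0$, pick $N$ with $\ow{Q}(u_n - u_m) < \varepsilon$ whenever $n,m \ge N$; then the displayed inequality gives $\ow{Q}(u - u_m) \le \varepsilon$ for every $m \ge N$. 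Hence $\lim_{m\to\infty} \ow{Q}(u - u_m) = 0$, which is the second assertion, and in particular $\ow{Q}(u - u_m) < \infty$ for $m$ large.

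It then remains to check $u \in \ow{D}$, i.e. $\ow{Q}(u) < \infty$. Fixing such an $m$, we have $u_m \in D(Q) \subseteq \ow{D}$ and $u - u_m \in \ow{D}$; since $\sqrt{\ow{Q}(\cdot)}$ obeys the triangle inequality (so that $\ow{D}$ is a vector space, as already used when extending $\ow{Q}$ to a bilinear form on $\ow{D}\times\ow{D}$), we get $\ow{Q}(u)^{1/2} \le \ow{Q}(u_m)^{1/2} + \ow{Q}(u - u_m)^{1/2} < \infty$, so $u \in \ow{D}$. The only point requiring a word of justification is the lower semicontinuity step, but it is elementary: for any finite $W \subseteq V$ the partial sum $\frac12\sum_{x,y\in W} b(x,y)(v(x)-v(y))^2 + \sum_{x\in W} c(x)v(x)^2$ is continuous in $v$ under pointwise convergence, hence bounded above by $\liminf_n \ow{Q}(u_n - u_m)$, and taking the supremum over all finite $W$ gives the claim. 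I do not expect any genuine obstacle here.
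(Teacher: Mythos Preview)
Your argument is correct and follows essentially the same route as the paper: apply Fatou's lemma (lower semicontinuity of $\ow{Q}$ under pointwise limits) to $u_l - u_n \to u - u_n$ and then use the $Q$-Cauchy property. The paper compresses the $\varepsilon$-argument and the check that $u\in\ow{D}$ into a single line, but the substance is identical.
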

\begin{proof}
By Fatou's lemma we obtain
\begin{align*}
	\ow{Q}(u-u_n)^{1/2} 	&\leq \liminf_{l\to \infty} \ow{Q}(u_l - u_n)^{1/2}\\
	& = \liminf_{l\to \infty} Q(u_l - u_n)^{1/2}. 
\end{align*}
Since $(u_n)$ is a $Q$-Cauchy sequence, this yields $u \in \ow{D}$ and $\lim\ow{Q}(u-u_n) = 0.$
\end{proof}
For $u,v \in D(Q)_e$ we set 
$$Q(u,v) := \ow{Q}(u,v) $$
to extend $Q$ to $D(Q)_e$. The previous lemma shows that $D(Q)$ is dense in $D(Q)_e$ with respect to the pseudometric induced by $Q$ on $D(Q)_e$.  Whenever the underlying graph $(b,c)$ is connected $D(Q)_e$ can be computed as follows. 

\begin{prop} \label{char extended space}
Let $(b,c)$ be connected. The extended Dirichlet space of a Dirichlet form $Q$  associated to $(b,c)$ is given by the closure of $D(Q)$ in $\mathbf{D}$, i.e.,
$$D(Q)_e  = \overline{D(Q)}^{\aV{\cdot}_o}.$$ 
\end{prop}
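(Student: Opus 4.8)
The plan is to prove the two inclusions $D(Q)_e \subseteq \overline{D(Q)}^{\aV{\cdot}_o}$ and $\overline{D(Q)}^{\aV{\cdot}_o} \subseteq D(Q)_e$ separately, using Lemma \ref{extended form} and Proposition \ref{structure of D} as the main tools, together with the characterization of $\aV{\cdot}_o$-convergence supplied by Theorem \ref{convergence in D}.

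For the inclusion $D(Q)_e \subseteq \overline{D(Q)}^{\aV{\cdot}_o}$, let $u \in D(Q)_e$ and let $(u_n) \subseteq D(Q)$ be an approximating sequence, so $u_n \to u$ pointwise and $(u_n)$ is $Q$-Cauchy. By Lemma \ref{extended form} we already know $u \in \ow{D}$ and $\ow{Q}(u-u_n) \to 0$. It then remains to upgrade $\ow{Q}$-convergence to $\aV{\cdot}_o$-convergence; since $\aV{u-u_n}_o^2 = \ow{Q}(u-u_n) + (u(o)-u_n(o))^2$ and $u_n(o) \to u(o)$ by pointwise convergence, both terms vanish, so $u_n \overset{\aV{\cdot}_o}{\to} u$ and hence $u \in \overline{D(Q)}^{\aV{\cdot}_o}$.

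For the reverse inclusion, let $u \in \overline{D(Q)}^{\aV{\cdot}_o}$ and pick $(u_n) \subseteq D(Q)$ with $\aV{u-u_n}_o \to 0$. Then $(u_n)$ is $\aV{\cdot}_o$-Cauchy, hence $\ow{Q}$-Cauchy, and since each $u_n \in D(Q)$ we have $\ow{Q}(u_n-u_m) = Q(u_n-u_m)$, so $(u_n)$ is a $Q$-Cauchy sequence in $D(Q)$. Moreover, by Proposition \ref{structure of D}(a) the point evaluations $F_x$ are $\aV{\cdot}_o$-continuous, so $u_n \to u$ pointwise. Thus $(u_n)$ witnesses $u \in D(Q)_e$ by definition of the extended Dirichlet space.

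The only place connectedness is genuinely used is to ensure that $\aV{\cdot}_o$ is a norm (non-degenerate) and that Proposition \ref{structure of D}(a) applies, giving continuity of point evaluations; both directions then reduce to bookkeeping with the identity $\aV{\cdot}_o^2 = \ow{Q}(\cdot) + F_o(\cdot)^2$ and the fact that on $D(Q)$ the forms $Q$ and $\ow{Q}$ agree. I do not expect a serious obstacle here: the substance has already been isolated in Lemma \ref{extended form} (which handles the delicate Fatou argument showing an approximating sequence lands in $\ow{D}$ with $\ow{Q}$-convergence), so the proposition is essentially a repackaging of that lemma together with the elementary relation between the $Q$-pseudometric and the Yamasaki norm. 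The mildest subtlety worth a sentence is noting that a $\aV{\cdot}_o$-Cauchy sequence in $D(Q)$ need not be $Q$-Cauchy a priori unless one observes $Q = \ow{Q}$ on $D(Q)$, which is immediate from $D(Q) \subseteq D(\QN) \subseteq \ow{D}$.
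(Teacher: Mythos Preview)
Your proof is correct and follows essentially the same route as the paper. The only cosmetic difference is in the inclusion $D(Q)_e \subseteq \overline{D(Q)}^{\aV{\cdot}_o}$: the paper observes that an approximating sequence is $\aV{\cdot}_o$-Cauchy, invokes completeness of $\mathbf{D}$, and then identifies the limit via pointwise convergence, whereas you go straight to $\aV{u-u_n}_o \to 0$ via Lemma~\ref{extended form}; both arguments are equivalent, and the reverse inclusion is handled identically in substance.
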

\begin{proof}
Let $u \in D(Q)_e$ be given and $(u_n)\subseteq D(Q)$ be an approximating sequence for $u$. Then $(u_n)$ is a Cauchy sequence with respect to $\aV{\cdot}_o$. Since $\mathbf{D}$ is complete $(u_n)$ converges in $\mathbf{D}$ to some function $v$. By the pointwise convergence of the $u_n$ to $u$ we infer $u = v$ . This shows $u \in \overline{D(Q)}^{\aV{\cdot}_o}$. The other inclusion follows from Proposition \ref{structure of D}.
\end{proof}
\begin{coro} \label{extended Dirichlet space regular DF}
 Let $(b,c)$ be connected and $\QD$ be the regular Dirichlet form associated with $(b,c)$ on $\ell^2(X,m)$. Its extended Dirichlet space is given by
 $$D(\QD)_e = \mathbf{D}_0.$$
\end{coro}
\begin{proof}
This is a consequence of the following equation
$$D(Q)_e = \overline{D(\QD)}^{\aV{\cdot}_o} = \overline{\overline{C_c(V)}^{\aV{\cdot}_Q}}^{\aV{\cdot}_o} =  \overline{C_c(V)}^{\aV{\cdot}_o},$$
where the first equality follows from the previous proposition and the last equality follows from the fact that the norm $\aV{\cdot}_o$ is continuous with respect to $\aV{\cdot}_Q = (\ow{Q}(\cdot) + \aV{\cdot}^2)^{1/2}$. 
\end{proof}

\begin{remark}
That the extended Dirichlet space of $Q$ is given as the closure of $D(Q)$ in $\mathbf{D}$ seems to be a new observation. 
\end{remark}

We turn our studies towards criteria for transience. First let us observe a necessary condition. 

\begin{lemma} \label{lemma:etended space transient form} Suppose $Q$ is transient. Then, $(D(Q)_e,Q)$ is a Hilbert space.  
\end{lemma}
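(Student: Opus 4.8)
The plan is to show that $(D(Q)_e, Q)$ is complete as an inner product space, given that $Q$ is transient. The bilinear form $Q$ restricted to $D(Q)_e$ is already a genuine inner product (not merely a pseudo-inner product): indeed, if $Q(u) = 0$ for $u \in D(Q)_e$, then $\ow{Q}(u) = 0$, so $u$ is constant on the connected graph $(b,c)$; but a transient form admits a reference function $g \in \ell^1(V,m) \cap \ell^\infty(V)$, strictly positive, with $\sum_x |u(x)| g(x) m(x) \leq \sqrt{Q(u)} = 0$, which forces $u \equiv 0$. (If one does not wish to assume connectedness here, one can argue componentwise, or simply invoke that the reference-function inequality directly gives nondegeneracy.) So it remains to verify completeness.

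Next I would take a $Q$-Cauchy sequence $(u_n) \subseteq D(Q)_e$ and produce a limit in $D(Q)_e$. The reference function inequality, applied to differences, gives
\[
\sum_{x \in V} |u_n(x) - u_m(x)|\, g(x) m(x) \leq \sqrt{Q(u_n - u_m)} \longrightarrow 0,
\]
so $(u_n)$ is Cauchy in $\ell^1(V, gm)$, hence converges in that space, and after passing to a subsequence it converges pointwise to some function $u: V \to \RR$ (using that $g$ is strictly positive, so $gm$ gives positive mass to each vertex). By Lemma \ref{extended form} each $u_n$ lies in $\ow{D}$; moreover, since $(u_n)$ is $Q$-Cauchy and $\ow{Q}(u_n - u_m) = Q(u_n - u_m)$, Fatou's lemma applied along the subsequence yields $\ow{Q}(u - u_n) \leq \liminf_m \ow{Q}(u_m - u_n) \to 0$ as $n \to \infty$, so in particular $\ow{Q}(u) < \infty$, i.e. $u \in \ow{D}$, and $Q(u - u_n) \to 0$.

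Finally I must check $u \in D(Q)_e$, i.e. that $u$ is approximable by a $Q$-Cauchy sequence from $D(Q)$ converging pointwise. For each $n$, pick an approximating sequence $(u_n^{(k)})_k \subseteq D(Q)$ for $u_n \in D(Q)_e$, so $u_n^{(k)} \to u_n$ pointwise and $\ow{Q}(u_n - u_n^{(k)}) \to 0$ as $k \to \infty$. A diagonal choice $v_n := u_n^{(k_n)}$ with $k_n$ large enough that $\ow{Q}(u_n - v_n) \leq 1/n$ and $|u_n^{(k_n)}(x) - u_n(x)| \leq 1/n$ for finitely many prescribed vertices (exhausting $V$ as $n \to \infty$) gives $v_n \in D(Q)$ with $Q(v_n - v_m) \leq 3(Q(u - v_n) + Q(u - u_n) + \ldots) \to 0$ — more cleanly, $\sqrt{Q(v_n - v_m)} \leq \sqrt{Q(v_n - u_n)} + \sqrt{Q(u_n - u)} + \sqrt{Q(u - u_m)} + \sqrt{Q(u_m - v_m)} \to 0$ — so $(v_n)$ is $Q$-Cauchy, and $v_n \to u$ pointwise by construction. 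Hence $u \in D(Q)_e$ and $Q(u - u_n) \to 0$, establishing completeness. The main obstacle is the last diagonal-sequence step: one must be careful that pointwise convergence of the $v_n$ to $u$ can be arranged simultaneously with the $Q$-Cauchy property, which is why I would fix an exhaustion $V = \bigcup_N F_N$ of finite sets and require the pointwise closeness of $v_n$ to $u_n$ only on $F_n$; combined with $u_n \to u$ pointwise this is enough. (Alternatively, one can bypass the diagonal argument by noting that $D(Q)$ is $Q$-dense in $D(Q)_e$ and that the reference-function estimate makes $Q$-convergence control $\ell^1(V,gm)$-convergence, hence a subsequence of any $Q$-approximating net converges pointwise — the cleanest route, which I would adopt if space permits.)
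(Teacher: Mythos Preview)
Your proof is correct and follows essentially the same route as the paper: extend the reference-function inequality to $D(Q)_e$ to obtain nondegeneracy and pointwise convergence of a $Q$-Cauchy sequence, then use Fatou to get $Q(u-u_n)\to 0$. The paper's argument is terser---it simply asserts that the pointwise limit $u$ lies in $D(Q)_e$ ``with approximating sequence $(u_n)$'' without spelling out the diagonal step you carry out (since the $u_n$ lie only in $D(Q)_e$, not $D(Q)$); your extra care here is justified, and note that in the discrete setting convergence in $\ell^1(V,gm)$ already gives pointwise convergence without passing to a subsequence.
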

\begin{proof}
By transience there exists a strictly positive $g \in \ell^1(V,m)\cap \ell^\infty(V)$ such that
\begin{gather}
	 \as{|u|,g} \leq \sqrt{Q(u)}, \label{inequality 2}
\end{gather}
for all $u \in D(Q)$. By the definition of $D(Q)_e$ and Lemma \ref{extended form} this inequality extends to all $u \in D(Q)_e$, thus $Q$  is non-degenerate on $D(Q)_e$. For proving completeness let $(u_n)$ be a Cauchy sequence in $(D(Q)_e, Q)$. Inequality \eqref{inequality 2} implies the pointwise convergence of $(u_n)$ to a function $u$. This yields $u \in D(Q)_e$ with approximating sequence $(u_n)$. We then infer from Lemma \ref{extended form}
\begin{align*}
Q(u_n - u) \to 0 \text{, as } n \to \infty.
\end{align*} 
This finishes the proof.
\end{proof}

Below we prove characterizations for recurrence and transience. In the recurrent case an important step will be to construct a sequence of functions in $D(Q)$ converging to $1$ with respect to $Q$. To obtain such a sequence we will need to study a transient Dirichlet form $Q^g$, which is defined by perturbing $Q$ in the following way. Let $g \in \ell^1(V,m)\cap \ell^{\infty}(V)$ be strictly positive. We then set
 \begin{gather*}
 Q^g: D(Q) \times D(Q) \to \RR \\
 Q^g(u,v) = Q(u,v) + \as{gu,v}. 
 \end{gather*}
The following proposition shows that $Q^g$ is indeed a transient Dirichlet form. 

\begin{prop}
$Q^g$ is a transient Dirichlet form.
\end{prop}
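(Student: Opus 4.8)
The plan is to exhibit an explicit reference function for $Q^g$ in the sense of Definition \ref{Definition transient space}, thereby invoking nothing beyond the defining inequality. The natural candidate is $g$ itself: since $g \in \ell^1(V,m) \cap \ell^\infty(V)$ is strictly positive by hypothesis, it is a legitimate candidate for a reference function, and what must be checked is the estimate
$$\sum_{x \in V} |u(x)| g(x) m(x) \leq \sqrt{Q^g(u)}$$
for all $u \in D(Q^g) = D(Q)$.

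First I would observe that, because $g > 0$ and $g \in \ell^1(V,m) \cap \ell^\infty(V)$, the function $g^{1/2}$ lies in $\ell^2(V,m)$ (indeed $\sum_x g(x) m(x) < \infty$). Then, by the Cauchy--Schwarz inequality on $\ell^2(V,m)$,
$$\sum_{x \in V} |u(x)| g(x) m(x) = \sum_{x \in V} \bigl(|u(x)| g(x)^{1/2}\bigr)\bigl(g(x)^{1/2}\bigr) m(x) \leq \Bigl(\sum_{x \in V} u(x)^2 g(x) m(x)\Bigr)^{1/2}\Bigl(\sum_{x \in V} g(x) m(x)\Bigr)^{1/2}.$$
The first factor is exactly $\as{gu,u}^{1/2} \leq \sqrt{Q^g(u)}$ by the definition of $Q^g$ and non-negativity of $Q$. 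So up to the harmless constant $\bigl(\sum_x g(x)m(x)\bigr)^{1/2}$ we already have the required bound; rescaling $g$ (replacing $g$ by $g / \sum_x g(x)m(x)$, which stays strictly positive and in $\ell^1 \cap \ell^\infty$, and changes $Q^g$ only by a constant factor in the perturbation, hence does not affect transience) gives a genuine reference function. Alternatively, and more cleanly, I would simply note that transience is insensitive to such a normalization, or present the reference function as $\widetilde g = g/\|g\|$ with $\|g\| = \sum_x g(x)m(x)$ and verify Definition \ref{Definition transient space} directly for $Q^{\widetilde g}$, remarking that $Q^{\widetilde g}$ and $Q^g$ have the same transience status since $\widetilde g \le \text{(const)}\, g$ and $\ge \text{(const)}\, g$ pointwise on any finite... actually the simplest phrasing is to just carry the constant and note it is irrelevant.

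There is essentially no obstacle here: the only mild point to get right is that $g \in \ell^1(V,m)$ forces $\sum_x g(x) m(x) < \infty$, which is precisely what makes $g^{1/2} \in \ell^2(V,m)$ and lets Cauchy--Schwarz close the argument. One should also remark at the outset that $Q^g$ being a Dirichlet form has already been noted in the text, so only transience (existence of a reference function) remains to be verified. I would write the proof in three lines: state that $g$ (suitably normalized) is the reference function, apply Cauchy--Schwarz, and identify the resulting factor with $\sqrt{Q^g(u)}$ via $Q^g(u) = Q(u) + \as{gu,u} \geq \as{gu,u}$.
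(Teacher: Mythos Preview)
Your proposal is correct and is essentially the paper's proof: Cauchy--Schwarz gives $\sum_x |u(x)|g(x)m(x) \le \sqrt{\|g\|_1}\sqrt{\as{gu,u}} \le \sqrt{\|g\|_1}\sqrt{Q^g(u)}$, and then one normalizes. The only place you get tangled is the normalization step --- there is no need to change the form $Q^g$; simply take $\tilde g = g/\sqrt{\|g\|_1}$ as the reference function for the \emph{fixed} form $Q^g$, which is exactly what the paper does.
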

\begin{proof}
Obviously the form $Q^g$ has the Markov property. As the sum of a closed and a continuous form it is also closed. 
For showing its transience we compute 
\[ \sum_{x\in V} |u(x)|g(x)m(x) \leq \sqrt{\|g\|_1}\sqrt{\as{gu,u}} \leq \sqrt{\|g\|_1} \sqrt{Q^g(u)}. \] 
Thus, we can choose $\tilde{g} = g/\sqrt{\|g\|_1}$ as a reference function.
\end{proof}

Let $L^g$ denote the self-adjoint operator which is associated with $Q^g$.  The following lemma is the key ingredient for characterizing recurrence. It is a variant of \cite[Lemma 1.6.6]{FOT}.

\begin{lemma} \label{main help lemma for recurrence}
 Let $Q$ be recurrent and let $g$ be as above. For any $x \in V$
 $$\lim_{\alpha \to 0+} (L^g + \alpha)^{-1}g(x) = 1.$$
\end{lemma}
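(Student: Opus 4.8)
The plan is to analyse the functions $w_\alpha:=(L^g+\alpha)^{-1}g$ for $\alpha>0$ and to show that they increase pointwise to the constant function $1$ as $\alpha\to 0+$. Since $g\in\ell^1(V,m)\cap\ell^\infty(V)\subseteq\ell^2(V,m)$, each $w_\alpha$ is a well-defined element of $D(L^g)=D(L)$ (the domains agree because multiplication by $g$ is a bounded operator on $\ell^2(V,m)$). I would organise the argument in three steps: (i) trap $w_\alpha$ between $0$ and $1$ and establish monotonicity in $\alpha$; (ii) rewrite the defining equation in terms of the unperturbed operator $L$; (iii) use recurrence to identify the pointwise limit.

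For step (i), $w_\alpha\ge 0$ because $g\ge 0$ and $(L^g+\alpha)^{-1}=\int_0^\infty e^{-\alpha t}e^{-tL^g}\dd t$ is positivity preserving (as $Q^g$ is a Dirichlet form); the same representation shows $w_\alpha$ increases pointwise as $\alpha\downarrow 0$, so $w_\alpha\uparrow w_0$ for some $w_0:V\to[0,\infty]$. For the bound $w_\alpha\le 1$ I would test the identity $Q^g(w_\alpha,v)+\alpha\as{w_\alpha,v}=\as{g,v}$, valid for all $v\in D(Q^g)=D(Q)$, against $v_\alpha:=(w_\alpha-1)^+=w_\alpha-\bigl((0\vee w_\alpha)\wedge 1\bigr)$, which lies in $D(Q)$ by the Markov property and in $\ell^2(V,m)$ since $0\le v_\alpha\le w_\alpha$. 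Expanding with the pointwise identity $w_\alpha v_\alpha=(1+v_\alpha)v_\alpha$ (check it according to whether $w_\alpha\le 1$ or $w_\alpha>1$) and using the elementary inequality $\ow{Q}(w_\alpha,v_\alpha)\ge 0$ — which follows from a term-by-term check that $\bigl(w_\alpha(x)-w_\alpha(y)\bigr)\bigl(v_\alpha(x)-v_\alpha(y)\bigr)\ge 0$ and $c(x)w_\alpha(x)v_\alpha(x)\ge 0$ for all $x,y$ — the identity collapses to
\[
0=\ow{Q}(w_\alpha,v_\alpha)+\as{gv_\alpha,v_\alpha}+\alpha\sum_{x\in V}v_\alpha(x)m(x)+\alpha\aV{v_\alpha}_2^2 ,
\]
a sum of nonnegative terms. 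Hence $\as{gv_\alpha,v_\alpha}=0$, and strict positivity of $g$ forces $v_\alpha\equiv 0$; thus $w_\alpha\le 1$ for every $\alpha>0$, and so $w_0\le 1$.

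For step (ii), write $L^g=L+M_g$ with $M_g$ multiplication by $g$. Then $(L^g+\alpha)w_\alpha=g$ turns into $(L+\alpha)w_\alpha=g(1-w_\alpha)=:\phi_\alpha$, and $0\le\phi_\alpha\le g$, so $\phi_\alpha\in\ell^2(V,m)$ and $w_\alpha=(L+\alpha)^{-1}\phi_\alpha$. Setting $\phi_0:=g(1-w_0)\ge 0$, monotonicity of $(w_\alpha)$ gives $\phi_\alpha\ge\phi_0$, whence $w_\alpha=(L+\alpha)^{-1}\phi_\alpha\ge(L+\alpha)^{-1}\phi_0=\int_0^\infty e^{-\alpha t}e^{-tL}\phi_0\dd t$; letting $\alpha\to 0+$ and using monotone convergence yields $w_0\ge G\phi_0$ pointwise, where $G$ is the $0$-th order resolvent of $L$. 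Finally, for step (iii), suppose $\phi_0(y_0)>0$ for some $y_0\in V$. Then $\phi_0\ge\phi_0(y_0)\delta_{y_0}$, hence $G\phi_0\ge\phi_0(y_0)\,G\delta_{y_0}$, and recurrence of $Q$ means $(G\delta_{y_0})(x)=G(y_0,x)=\infty$ for every $x\in V$ (Definition \ref{Definition transient semigroup}); this contradicts $w_0\le 1$. Therefore $\phi_0\equiv 0$, and since $g>0$ this gives $w_0\equiv 1$, i.e. $\lim_{\alpha\to 0+}(L^g+\alpha)^{-1}g(x)=1$ for every $x\in V$.

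I expect step (i) to be the main obstacle: one must verify carefully that the tested identity really does reduce to a sum of nonnegative quantities, which requires checking that all the inner products appearing are finite (they are, since $w_\alpha,v_\alpha\in\ell^2(V,m)$ and $g\in\ell^\infty(V)$), that $(w_\alpha-1)^+$ genuinely lies in $D(Q)$, and that the ``first-order'' positivity $\ow{Q}\bigl(w_\alpha,(w_\alpha-1)^+\bigr)\ge 0$ holds for the nonnegative function $w_\alpha$. The remaining steps are then comparatively soft.
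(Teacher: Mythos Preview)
Your proof is correct and follows the same three-part architecture as the paper's argument: (a) establish the upper bound $w_\alpha:=(L^g+\alpha)^{-1}g\le 1$; (b) link the perturbed and unperturbed resolvents via $(L+\alpha)w_\alpha=g(1-w_\alpha)$; (c) invoke recurrence to force the limit to equal~$1$. The genuine difference lies in step~(a). The paper proves $w_\alpha\le 1$ by a change-of-measure trick: it recognises $(L^g+\alpha)^{-1}[(g+\alpha)f]$ as the $1$-resolvent of $Q$ viewed as a Dirichlet form on $\ell^2\bigl(V,(g+\alpha)m\bigr)$, applies the Markov property~(R5) of that resolvent, and then lets $f\uparrow 1$. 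Your route is a direct maximum-principle computation: you test the resolvent identity against $(w_\alpha-1)^+$ and observe that every resulting term is nonnegative, so they all vanish. Your argument is more elementary and self-contained; the paper's device, on the other hand, yields the bound as a one-line consequence of an abstract resolvent property and transfers without change to non-discrete state spaces, where your pointwise verification of $\ow{Q}\bigl(w_\alpha,(w_\alpha-1)^+\bigr)\ge 0$ would have to be replaced by a more careful argument. In step~(c) you exploit the monotonicity of $w_\alpha$ (from the Laplace representation) and compare globally with $G\phi_0$, whereas the paper estimates at a single point via the diagonal term $(L+\alpha)^{-1}\delta_x(x)$; these are equivalent uses of recurrence.
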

\begin{proof}
 We write $Q_\alpha$ for the perturbed form
$$Q_\alpha(u,v) := Q(u,v) + \alpha \as{u,v}.$$
Recall that  
$$Q_\alpha(w,u) = \as{f,u}$$
holds for all $u \in D(Q)$ if and only if  $w = (L+\alpha)^{-1}f$ (see Appendix~\ref{appendix:dirichlet forms}). The proof will be done in three steps.

\emph{Step 1}: We compute the resolvent of $Q^g$ in terms of the resolvent of $Q$. Using the above characterization of the resolvent with respect to  $Q_\alpha$ let us observe that for any $f\in \ell^2(V,m)$ and $u \in D(Q)$
\begin{align*}
	 Q_\alpha \left((L^g + \alpha)^{-1}f,u \right) &= Q_\alpha^g \left((L^g + \alpha)^{-1}f,u\right) - \as{g(L^g + \alpha)^{-1}f,u}\\
	 &= \as{f - g(L^g + \alpha)^{-1}f,u}.
\end{align*}
This implies 
\begin{gather} (L^g + \alpha)^{-1}f = (L + \alpha)^{-1}\left[f-g(L^g + \alpha)^{-1}f \right]. \label{resolventeq}
\end{gather}

\emph{Step 2}: We show $(L^g+\alpha)^{-1}g \leq 1$ by considering $Q$ as a Dirichlet form on the space $\ell^2(V, (g+\alpha)m)$. 

Since we assumed $g$ to be positive and bounded the measures $(g+\alpha)m$ and $m$ are equivalent. Thus, $Q$ is closed on $\ell^2(V, (g+\alpha)m)$.  For $f \in \ell^2(V, (g+\alpha)m)$and $v \in D(Q)$ we compute
\begin{align*}
	&Q\left((L^g+ \alpha)^{-1}[\alpha f + fg],v\right) + \as{(L^g+ \alpha)^{-1}[\alpha f + fg],v}_{(g+\alpha)m}\\
	&= Q^g\left((L^g+ \alpha)^{-1}[\alpha f + fg],v\right) + \alpha \as{(L^g+ \alpha)^{-1}[\alpha f + fg],v}\\	
	&=  Q^g_\alpha \left((L^g+ \alpha)^{-1}[\alpha f + fg],v\right)\\
	&= \as{\alpha f + gf, v} = \as{f,v}_{(\alpha+g)m}.
\end{align*}
From this calculation we infer that $(L^g+ \alpha)^{-1}[\alpha f + fg]$ is the 1st-order resolvent of $f$ associated to $Q$ as a Dirichlet form on  $\ell^2(V, (g+\alpha)m)$. The Markov property of this resolvent (see Appendix~\ref{appendix:dirichlet forms}) implies that for any $f \in \ell^2(V,(g+\alpha)m)$ with $0 \leq f \leq 1$ the inequality
\begin{gather}(L^g+ \alpha)^{-1}[\alpha f + fg] \leq 1 \label{inequality3}\end{gather}
holds. Pick a sequence $(f_n)$ in $\ell^2(V,(g+\alpha)m)$ such that $ 0\leq f_n \leq f_{n+1}\leq 1$ and $\lim f_n(x) = 1$ for all $x \in V$. Then $f_n g \to g$ in $\ell^2(V,m)$. Inequality \eqref{inequality3} together with the non-negativity of $(L^g+ \alpha)^{-1}(\alpha f_n)$ yields
\begin{align*}
	(L^g+\alpha)^{-1}g(x) &=  \lim_{n \to \infty} (L^g+\alpha)^{-1}(f_ng)(x)\\
	& \leq  \limsup_{n\to \infty} [1 - (L^g+ \alpha)^{-1}(\alpha f_n)(x)] \\ 
	&\leq 1.
\end{align*}

\emph{Step 3}: Equation \eqref{resolventeq} applied to $g$ and  Step 2 imply
\begin{align*}
	1 &\geq \limsup_{\alpha \to 0+} (L^g+\alpha)^{-1}g(x) \\
	&=  \limsup_{\alpha \to 0+} (L + \alpha)^{-1}\left[g-g(L^g + \alpha)^{-1}g  \right](x) \\
	&= \limsup_{\alpha \to 0+}\frac{1}{m(x)} \as{ g(1-(L^g + \alpha)^{-1}g) ,(L + \alpha)^{-1} \delta_x } \\
	&\geq \limsup_{\alpha \to 0+} \frac{g(x)(1-(L^g + \alpha)^{-1}g(x))}{m(x)} \as{ \delta_x ,(L + \alpha)^{-1} \delta_x } \geq 0.
\end{align*} 
Since $(L + \alpha)^{-1} \delta_x(x)$ converges to infinity by our assumptions ($Q$ was supposed to be recurrent), $(L^g + \alpha)^{-1}g(x)$ must tend to one. This finishes the proof.
\end{proof}

We can now prove the main theorems of this chapter. For general Dirichlet forms they can be found in \cite[Theorem~1.6.2 and Theorem~1.6.3]{FOT}.

\begin{theorem}[Abstract characterization of transience] \label{char transience}
Let $(b,c)$ be connected and $Q$ a Dirichlet form associated with $(b,c)$. Then, the following conditions are equivalent: 
\begin{itemize}
\setlength{\itemsep}{0pt}
\item[(i)]$e^{-tL}$ is transient.
\item[(ii)] $u = 0$ for every $u\in  D(Q)_e$ with $Q(u) = 0$. 
\item[(iii)] $(D(Q)_e,Q)$ is a Hilbert space.
\end{itemize}
\end{theorem}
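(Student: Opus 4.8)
The plan is to prove the cycle of implications (i) $\Rightarrow$ (ii) $\Rightarrow$ (iii) $\Rightarrow$ (i), leaning on the machinery already set up. For (i) $\Rightarrow$ (ii): if $e^{-tL}$ is transient, then by Theorem \ref{Transience} the form $Q$ is transient, so there is a strictly positive reference function $g \in \ell^1(V,m)\cap\ell^\infty(V)$ with $\as{|u|,g} \leq \sqrt{Q(u)}$ for all $u \in D(Q)$. By Lemma \ref{extended form} this inequality extends verbatim to all $u \in D(Q)_e$ (approximate $u$ by $(u_n)\subseteq D(Q)$; use $\ow{Q}(u-u_n)\to 0$ together with Fatou to pass the inequality to the limit). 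Hence $Q(u)=0$ forces $\as{|u|,g}=0$, and since $g>0$ everywhere this gives $u\equiv 0$. For (ii) $\Rightarrow$ (iii): $(D(Q)_e,Q)$ is always an inner product space once $Q$ is nondegenerate, which is exactly (ii); completeness is the content of the lemma already proved above (``Suppose $Q$ is transient. Then $(D(Q)_e,Q)$ is a Hilbert space''), but one has to be a little careful — that lemma used transience, not just nondegeneracy. So for (iii) I would either re-run that completeness argument directly from (ii), or (cleaner) prove (iii) $\Rightarrow$ (i) and (i) $\Rightarrow$ (iii) separately so that the Hilbert space structure is always obtained from transience.

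Concretely I would do (ii) $\Rightarrow$ (i) and (i) $\Rightarrow$ (iii), together with the trivial (iii) $\Rightarrow$ (ii). The implication (i) $\Rightarrow$ (iii) is the lemma stated just above the theorem: transience of $e^{-tL}$ gives transience of $Q$ (Theorem \ref{Transience}), and then that lemma hands us the Hilbert space. The implication (iii) $\Rightarrow$ (ii) is immediate: a genuine Hilbert space inner product is positive definite, so $Q(u)=0 \Rightarrow u=0$. The real work is (ii) $\Rightarrow$ (i), and this is where I expect the main obstacle to be. The idea is the contrapositive via Proposition \ref{dichotomy}: if $e^{-tL}$ is \emph{not} transient then, by connectedness and the dichotomy, it is recurrent, and I must manufacture a nonzero $u \in D(Q)_e$ with $Q(u)=0$. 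The natural candidate is $u \equiv 1$, so the task reduces to showing $1 \in D(Q)_e$ when $Q$ is recurrent, i.e.\ producing a $Q$-Cauchy sequence $(e_n)\subseteq D(Q)$ with $Q(e_n)\to 0$ and $e_n \to 1$ pointwise.

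This is exactly what Lemma \ref{main help lemma for recurrence} is designed for. Fix a strictly positive $g\in\ell^1(V,m)\cap\ell^\infty(V)$ and set $e_\alpha = (L^g+\alpha)^{-1}g \in D(Q)$. Lemma \ref{main help lemma for recurrence} gives $e_\alpha(x)\to 1$ for every $x\in V$ as $\alpha\to 0+$. It remains to control $Q(e_\alpha)$: one computes, using the resolvent identity $Q^g_\alpha(e_\alpha,v)=\as{g,v}$ with $v=e_\alpha$, that
\begin{align*}
Q(e_\alpha) + \alpha\aV{e_\alpha}_2^2 + \as{g e_\alpha, e_\alpha} &= \as{g,e_\alpha},
\end{align*}
whence $Q(e_\alpha) \leq \as{g,e_\alpha} - \as{g e_\alpha,e_\alpha} = \as{g\hspace(1-e_\alpha),e_\alpha}$; since $0\leq e_\alpha\leq 1$ (Step 2 of Lemma \ref{main help lemma for recurrence}) this is $\leq \as{g,1-e_\alpha} = \aV{g}_1 - \as{g,e_\alpha} \to \aV{g}_1 - \aV{g}_1 = 0$ by dominated convergence ($g\in\ell^1$, $e_\alpha\to 1$ pointwise, $0\le e_\alpha\le 1$). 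Thus $Q(e_\alpha)\to 0$; picking $\alpha=1/n$ gives a sequence with $Q(e_n - e_m) \to 0$ (it is $Q$-Cauchy since $Q(e_n)\to 0$ and hence $\|e_n\|_Q$ is Cauchy — or rather, $Q(e_n)\to 0$ plus $\ell^2$-considerations; one argues $e_n \to 1$ pointwise and $Q(e_n)\to 0 = Q(1)$ make $(e_n)$ $Q$-Cauchy via $Q(e_n-e_m)^{1/2}\le Q(e_n)^{1/2}+Q(e_m)^{1/2}$), and $e_n\to 1$ pointwise. Hence $1 \in D(Q)_e$ with $Q(1)=0$ and $1\neq 0$, contradicting (ii). I'd flag that the slightly delicate point is verifying $(e_n)$ is genuinely $Q$-Cauchy rather than merely $Q$-null-ish, and that the subtraction $\as{g,e_\alpha}-\as{ge_\alpha,e_\alpha}$ is legitimate (all terms finite since $g\in\ell^1\cap\ell^\infty$ and $0\le e_\alpha\le 1$).
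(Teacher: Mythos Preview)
Your proposal is correct and follows essentially the same route as the paper: the same cycle $(i)\Rightarrow(iii)\Rightarrow(ii)\Rightarrow(i)$, with $(i)\Rightarrow(iii)$ via the preceding lemma, $(iii)\Rightarrow(ii)$ trivial, and $(ii)\Rightarrow(i)$ by the contrapositive using $e_n=(L^g+1/n)^{-1}g$ from Lemma~\ref{main help lemma for recurrence} to show $1\in D(Q)_e$ with $Q(1)=0$. Your self-flagged worry about $Q$-Cauchyness is harmless: the seminorm triangle inequality $Q(e_n-e_m)^{1/2}\le Q(e_n)^{1/2}+Q(e_m)^{1/2}$ is exactly what is needed, and the paper uses the same computation (noting $0\le e_n\le 1$ and $g\in\ell^1$ for the dominated convergence step).
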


\begin{proof}
$'(i) \Rightarrow (iii)'$: This has already been shown in Lemma~\ref{lemma:etended space transient form}.

$'(iii)\Rightarrow (ii)'$: This is trivial.

$'(ii) \Rightarrow (i)'$: Assume $e^{-tL}$ is recurrent. Then, for a strictly positive $g \in \ell^1(V,m) \cap \ell^\infty(V)$, Lemma \ref{main help lemma for recurrence} implies $ u_n:= (L^g+1/n)^{-1}g \to 1$ pointwise. Furthermore, by the correspondence $(L^g + \alpha)^{-1} \leftrightarrow Q^g$ we obtain
\begin{align*}
Q(u_n,u_n) &= \as{g,u_n} - \frac{1}{n}\as{u_n,u_n}- \as{gu_n,u_n}.
\end{align*}
Because $g \in \ell^1(V,m)$ and $u_n$ is uniformly bounded by $1$ (see step 2 of the previous proof), we infer with Lebesgue's theorem 
$$ \lim_{n\to \infty}\as{g,u_n} = \lim_{n\to \infty}\as{gu_n,u_n} = \sum_{x \in V}g(x)m(x).  $$
This shows 
$$ Q(u_n,u_n) \leq \as{g,u_n} - \as{gu_n,u_n} \to 0 \text{, as } n\to \infty.$$
The above computations imply $1 \in D(Q)_e$ and Lemma \ref{extended form} yields $Q(1) = 0$. This is a contradiction to $(ii)$ which shows that $e^{-tL}$ is not recurrent. Since $(b,c)$ is connected, this implies transience.
\end{proof}

\begin{theorem}[Abstract characterization of recurrence]\label{char recurrence}
Let $(b,c)$ be connected and $Q$ a Dirichlet form associated with $(b,c)$. Then, the following conditions are equivalent: 
\begin{itemize}
\setlength{\itemsep}{0pt}
\item[(i)]$ e^{-tL}$ is recurrent.
\item[(ii)] There exists a sequence $(u_n)$ in $D(Q)$ satisfying $\lim u_n = 1$ pointwise and $\lim Q(u_n) = 0$.
\item[(iii)] $1 \in D(Q)_e$ and $Q(1) = 0$.
\end{itemize}
\end{theorem}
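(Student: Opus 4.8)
The plan is to establish the cycle $(i) \Rightarrow (iii) \Rightarrow (ii) \Rightarrow (i)$, since the implication $(iii) \Rightarrow (ii)$ is essentially definitional once we know $1 \in D(Q)_e$. First I would prove $(i) \Rightarrow (iii)$: assuming $e^{-tL}$ is recurrent, pick a strictly positive $g \in \ell^1(V,m) \cap \ell^\infty(V)$ and run exactly the computation from the proof of Theorem \ref{char transience}. Namely, Lemma \ref{main help lemma for recurrence} gives $u_n := (L^g + 1/n)^{-1}g \to 1$ pointwise, the resolvent correspondence $(L^g+\alpha)^{-1} \leftrightarrow Q^g$ yields $Q(u_n,u_n) = \as{g,u_n} - \tfrac1n\as{u_n,u_n} - \as{gu_n,u_n}$, and since the $u_n$ are uniformly bounded by $1$ (Step 2 of that lemma) and $g \in \ell^1(V,m)$, Lebesgue's dominated convergence theorem forces $\as{g,u_n} \to \sum_x g(x)m(x)$ and $\as{gu_n,u_n} \to \sum_x g(x)m(x)$, whence $\limsup Q(u_n) \le 0$. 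Thus $(u_n)$ is an approximating sequence exhibiting $1 \in D(Q)_e$, and Lemma \ref{extended form} gives $Q(1) = \ow{Q}(1) = 0$.

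Next, $(iii) \Rightarrow (ii)$ is immediate: by the definition of $D(Q)_e$ there is a $Q$-Cauchy sequence $(u_n) \subseteq D(Q)$ with $u_n \to 1$ pointwise, and Lemma \ref{extended form} gives $\ow{Q}(1 - u_n) \to 0$; combined with $Q(1) = \ow{Q}(1) = 0$ and the bilinearity of $\ow{Q}$ on $\ow{D}$, one gets $Q(u_n) = \ow{Q}(u_n) \to \ow{Q}(1) = 0$. So $(u_n)$ is the desired sequence.

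The remaining and most substantial implication is $(ii) \Rightarrow (i)$. Suppose there is $(u_n) \subseteq D(Q)$ with $u_n \to 1$ pointwise and $Q(u_n) \to 0$; I want to conclude $e^{-tL}$ is recurrent, equivalently (by connectedness and Proposition \ref{dichotomy}) that $e^{-tL}$ is not transient. I would argue by contradiction using Theorem \ref{Transience}: if $Q$ were transient there would be a strictly positive reference function $g \in \ell^1(V,m) \cap \ell^\infty(V)$ with $\sum_x |u(x)|g(x)m(x) \le \sqrt{Q(u)}$ for all $u \in D(Q)$. Applying this to $u_n$ and using Fatou's lemma (or monotone convergence, since we may replace $u_n$ by suitable normal contractions to make things nonnegative and monotone, or simply take $|u_n| \wedge 1$, which still lies in $D(Q)$ by the Markov property and still converges pointwise to $1$ with $Q$ not increased by Lemma \ref{Markov property}), we get $\sum_x g(x)m(x) = \|g\|_1 \le \liminf \sum_x |u_n(x)| g(x) m(x) \le \liminf \sqrt{Q(u_n)} = 0$, contradicting strict positivity of $g$ and $m$. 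Hence $Q$ is not transient, so $e^{-tL}$ is not transient, so by the dichotomy it is recurrent.

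The main obstacle is the interchange of limit and summation in the $(ii) \Rightarrow (i)$ step: one must be careful that the pointwise convergence $u_n \to 1$ together with $Q(u_n) \to 0$ legitimately passes to the inequality against the reference function. Replacing $u_n$ by $0 \vee (u_n \wedge 1)$ (which stays in $D(Q)$ by the Markov property of $\QD \subseteq Q$, converges pointwise to $1$, and does not increase $\ow Q$ by Lemma \ref{Markov property}) reduces matters to a clean application of Fatou's lemma to the nonnegative integrands $u_n(x) g(x) m(x)$, which makes the argument airtight. Everything else is a bookkeeping assembly of results already proved in this chapter.
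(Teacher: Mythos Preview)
Your proposal is correct and uses the same core ingredient as the paper: the sequence $u_n = (L^g + 1/n)^{-1}g$ from Lemma~\ref{main help lemma for recurrence} to pass from recurrence to the approximating sequence, and Lemma~\ref{extended form} for the link between (ii) and (iii). The paper runs the cycle as $(i)\Rightarrow(ii)\Rightarrow(iii)\Rightarrow(i)$ and closes it by invoking Theorem~\ref{char transience} directly (condition (ii) there fails once $1\in D(Q)_e$ with $Q(1)=0$), whereas you close $(ii)\Rightarrow(i)$ by a hands-on Fatou argument against a reference function; both are fine, yours is a touch more self-contained while the paper's is shorter. One minor point: Fatou applies straight to the nonnegative integrands $|u_n(x)|g(x)m(x)$, so the preliminary truncation $0\vee(u_n\wedge 1)$ is not needed (and the Markov property you invoke holds for $Q$ itself, not merely via $\QD\subseteq Q$).
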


\begin{proof}
The implication '(i) $\Rightarrow$ (ii)' has already been shown in the proof of Theorem \ref{char transience}.

 '(ii) $\Rightarrow$ (iii)' follows from the definition of $D(Q)_e$ and Lemma \ref{extended form}.

 '(iii) $\Rightarrow$ (i)': By the second statement of Theorem \ref{char transience}, $(iii)$ implies non-transience of $e^{-tL}$. Connectedness of $(b,c)$ implies recurrence.    
\end{proof}

As a consequence of these theorems, let us note that a nonvanishing potential $c$ implies transience. 

\begin{coro} 
Let $(b,c)$ be a connected graph such that $c \not \equiv 0$. Assume $Q$ is a Dirichlet form associated with $(b,c)$. Then, $Q$ is transient. 
\end{coro}
\begin{proof}
If $1$ belonged to $D(Q)_e$, we would obtain 
$$Q(1) = \ow{Q}(1) = \sum_{x\in V} c(x) \neq 0.$$
Then, Theorem \ref{char recurrence} implies that $Q$ is not recurrent since condition $(iii)$ would fail. Because $(b,c)$ is connected, we infer the transience of $Q$.
\end{proof}

\begin{remark}
The previous result is certainly well-known to experts. Because of it will assume $c \equiv 0$ in the subsequent text whenever we deal with recurrence/transience.
\end{remark}

When dealing with the regular Dirichlet form associated to a graph the previous theorems show that recurrence does not depend on the underlying measure $m$. Namely, the following holds.

\begin{coro} \label{Independence of underlying measure}
 Let $(b,c)$ be connected. Let $m_1$ and $m_2$ be two measures of full support on $V$. Let $\QD_{m_i}$ be the regular Dirichlet form associated with $(b,c)$ on $\ell^2(V,m_i)$, $i = 1,2$. Then, $\QD_{m_1}$ is recurrent if and only if $\QD_{m_2}$ is recurrent.  
\end{coro}
\begin{proof}
 We have seen in Corollary~\ref{extended Dirichlet space regular DF} that the extended Dirichlet space of $\QD_{m_1}$ and $\QD_{m_2}$ is given by $\mathbf{D}_0$. As $\mathbf{D}_0$ does not depend on the underlying measure and the previous theorems show that recurrence can be characterized in terms of the extended Dirichlet space the claim follows. 
\end{proof}

\begin{remark} 
 The previous corollary is certainly well-knwon to experts. Nevertheless, it is quite remarkable as the domain of $\QD$ heavily depends on the measure $m$. In fact, the global properties studied in Section \ref{further global properties} also on the underlying measure.
\end{remark}


\section{Discrete time v.s. continuous time} \label{discrete time versus continuous time}

In this section we are going to compare the notion of recurrence of a discrete time Markov chain which is associated with a graph with the one developed in the previous chapter. This notion is usually used in textbooks for the definition of a recurrent graph, see e.g. \cite{Soa,Woe}.


Let $(\Omega, \mathcal{A}, \mathbb{P})$ be a probability space. A sequence of random variables $(X_n)_{n\geq 0}$ on $\Omega$ with values in $V$ will be called a random walk associated with $(b,0)$ if the following conditions are satisfied:
$$
\mathbb{P}(X_{n+1} = y \, | \, X_n = x) = \frac{b(x,y)}{\text{deg}(x)} \text{ for all } x,y \in V, n\geq 0 
$$
and 
\begin{align*}
\mathbb{P}(X_{n+1} = x_{n+1}\,|\, X_n = x_n, X_{n-1} &= x_{n-1},\ldots,X_0 = x_0) \\ &= \mathbb{P}(X_{n+1} = x_{n+1}\,|\, X_n = x_n)
\end{align*}
for all $n \geq 0$ and all $x_0,\ldots,x_{n+1} \in V.$ We skip the proof of the existence of such a Markov chain since this is standard.

Given a random walk associated with $(b,0)$ one might ask about the long time behavior of $(X_n)_{n\geq 0}$. In particular the question whether $(X_n)_{n\geq 0}$ returns to a particular point infinitely often is of interest. 

\begin{definition}[Recurrence random walk] \label{recurrence random walk}
A random walk $(X_n)_{n \geq 0}$ associated to $(b,0)$ is called \emph{recurrent} if  
$$\mathbb{P}(X_n = y \text{ infinitely often } |X_0 = x) = 1$$
for all $x,y \in V$. It is called \emph{transient} if for all $x,y \in V$ the above probability is strictly less than $1$.
\end{definition}

\begin{remark}
 Usually one calls a graph $(b,0)$ recurrent whenever the random walk associated with it is recurrent, see e.g. \cite{Soa,Woe}. The known criteria for recurrence and transience are usually proven in this context.
\end{remark}

Let $P$ be the transition matrix of the random walk associated with $(b,0)$, i.e. the infinite matrix with entries 
$$P(x,y) = \frac{b(x,y)}{\text{deg}(x)}.$$
The following theorem relates recurrence of the random walk of a graph to the matrix $P$. It is a standard exercise in Markov chain theory, see e.g. \cite{FE}.
\begin{theorem}\label{char recurrene random walk}                                                                                                                                                                      
	Let $(X_n)_{n \geq 0}$ be a random walk associated with a connected graph $(b,0)$. Then, $(X_n)_{n\geq 0}$ is recurrent if and only if 
	$$\sum_{n = 0}^{\infty} P^{(n)}(x,y) = \infty$$
	for all $x,y \in V$. Here $ P^{(n)}$ denotes powers of the matrix $P$.
\end{theorem}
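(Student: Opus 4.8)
The plan is to prove the classical equivalence between recurrence of the discrete-time random walk and divergence of the Green series $\sum_n P^{(n)}(x,y)$ by a standard first-return/renewal argument, which is self-contained and does not rely on the continuous-time theory of the previous chapters. Throughout fix a connected graph $(b,0)$ and write $P_x$ for the law of the chain started at $x$. First I would introduce the first-return time $\tau_y = \inf\{n \geq 1 \mid X_n = y\}$ and the return probability $f(x,y) = P_x(\tau_y < \infty)$, together with the expected number of visits $G^{\mathrm{disc}}(x,y) = \mathbb{E}_x\bigl[\sum_{n\geq 0}\mathbbm{1}_{\{X_n = y\}}\bigr] = \sum_{n=0}^\infty P^{(n)}(x,y)$.

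The core step is the renewal identity. Decomposing a visit to $y$ at time $n$ according to the last visit is the usual trick, but it is cleaner to decompose according to successive returns: using the strong Markov property at $\tau_y$, one shows $P_x(X \text{ visits } y \text{ at least } k \text{ times}) = f(x,y) f(y,y)^{k-1}$, hence $G^{\mathrm{disc}}(x,y) = f(x,y)\sum_{k\geq 0} f(y,y)^k$. From this geometric-series formula two facts drop out: (1) $G^{\mathrm{disc}}(y,y) = \infty$ iff $f(y,y) = 1$; (2) if $f(y,y) = 1$ then, since connectedness gives $f(x,y) > 0$ for every $x$ (there is a path of positive $b$-weight, so some power $P^{(k)}(x,y) > 0$), we get $G^{\mathrm{disc}}(x,y) = \infty$ for all $x$. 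The event $\{X_n = y \text{ infinitely often}\}$ has, again by the strong Markov property applied repeatedly at the return times, probability $f(x,y)\cdot\lim_k f(y,y)^k$, which is $1$ for all $x,y$ exactly when $f(y,y) = 1$ (using $f(x,y) = 1$ in the recurrent case, which itself follows from $f(y,y)=1$ by a connectedness/Borel–Cantelli-type argument: the chain visits $y$ i.o., and from $y$ reaches $x$ with positive probability, so by recurrence at $y$ it does so i.o., forcing $f(x,y) = 1$). Chaining these equivalences yields: $(X_n)$ recurrent $\iff$ $f(y,y) = 1$ $\iff$ $G^{\mathrm{disc}}(x,y) = \sum_n P^{(n)}(x,y) = \infty$ for all $x,y$, and I would note that the negations line up to give the transient case simultaneously, so the dichotomy is genuine.

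The main obstacle is bookkeeping with the strong Markov property: one must justify that conditioning on $\{X_m = y\}$ and shifting gives an independent fresh copy of the chain, and that the i.o. event factors through the return times. This is entirely standard for countable-state Markov chains, so I would keep it brief and, if desired, simply cite a textbook (e.g. \cite{Woe}, from which the theorem is taken) for the renewal identity and the equivalence of pointwise recurrence with $f(y,y)=1$. The only graph-specific input beyond the generic Markov chain theory is that connectedness of $(b,0)$ upgrades $f(x,y) > 0$ to the ``for all $x,y$'' form of the statement, which follows because a $b$-path from $x$ to $y$ witnesses $P^{(k)}(x,y) > 0$ for some $k$, hence $f(x,y) \geq P^{(k)}(x,y) > 0$.
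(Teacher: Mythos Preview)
Your argument is correct and is the standard renewal/first-return proof of this classical fact. The paper's own proof is much terser: it observes that connectedness gives $P^{(n)}(x,y)>0$ for some $n$, uses this (via an argument parallel to the proof of Proposition~\ref{dichotomy}) to reduce ``for all $x,y$'' to ``for some $x,y$'', and then simply cites Proposition~1.17 of \cite{Woe} for the equivalence with recurrence. In other words, you have unpacked precisely the content that the paper outsources to \cite{Woe}; your closing remark that one could ``simply cite a textbook (e.g.\ \cite{Woe})'' is in fact all the paper does. The only graph-specific input---connectedness giving $P^{(k)}(x,y)>0$---is used in both proofs for the same purpose.
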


We will now discuss the relation of recurrence of Dirichlet forms and recurrence of the random walk associated to a weighted graph $(b,0)$. Assume $m \equiv 1$ and let us write $\widetilde{L} = D - A$, where $D$ and $A$ are the two infinite matrices with entries given by 
$$D(x,y) = \begin{cases}  \text{deg}(x) &\text{if } x = y \\ 0 &\text{if } x \neq y  \end{cases},$$ 
and
$$A(x,y) = \begin{cases}  0 &\text{if } x = y \\ b(x,y) &\text{if } x \neq y  \end{cases}.$$
 Then $P = D^{-1}A$ is the transition matrix of the random walk associated to $(b,0)$ (here $D^{-1}$ is the formal inverse of $D$, i.e., an infinite matrix having $\text{deg}^{-1}$ on its diagonal). 
 
\begin{theorem} \label{discrete time v.s. continuous time}
Let $(b,0)$ be connected and let $e^{-t\LD}$ be the semigroup associated with $\QD$ on $\ell^2(V,1)$. For any $x,y \in V$ the equality
$$G(x,y) = \int_0^\infty e^{-t\LD}\delta_x(y)dt = \frac{1}{\deg(x)}\sum_{n=0}^{\infty}P^{(n)}(x,y)$$
holds.
\end{theorem}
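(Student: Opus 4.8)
The plan is to obtain the identity by exhausting $V$ with finite connected sets $V_1\subseteq V_2\subseteq\cdots$, $\bigcup_kV_k=V$, computing both sides for the Dirichlet truncation to $V_k$ via a Neumann series, and then letting $k\to\infty$. For each $k$ let $Q_k$ be the closure of $C_c(V_k)$ in $\ell^2(V,1)$ with respect to the form norm (a Dirichlet form on $\ell^2(V_k,1)$), and let $L_k$ be its self-adjoint operator. Green's formula (Lemma~\ref{green}), applied to functions supported in $V_k$, identifies $L_k$ with the finite matrix $L_k=D_k-A_k$, where $D_k=\operatorname{diag}(\deg(x))_{x\in V_k}$ carries the \emph{full} vertex degrees and $A_k=(b(x,y))_{x,y\in V_k}$; the edges leaving $V_k$ contribute to $D_k$, not to $A_k$. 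Since $(b,0)$ is connected and $V$ is infinite, the only $u\in C_c(V_k)$ with $\ow{Q}(u)=0$ is $u=0$, so $L_k$ is strictly positive definite; for the same reason the substochastic matrix $P_k:=D_k^{-1}A_k$ — whose entries are $b(x,y)/\deg(x)=P(x,y)$, i.e. $P_k$ is the restriction of $P$ to $V_k$ — has no closed class and hence spectral radius $<1$. Therefore $L_k^{-1}=(I-P_k)^{-1}D_k^{-1}=\bigl(\sum_{n\ge0}P_k^{\,n}\bigr)D_k^{-1}$, and using $\int_0^\infty e^{-tL_k}\,dt=L_k^{-1}$ together with the symmetry of $L_k$ one gets, for $x,y\in V_k$,
\begin{equation*}
\int_0^\infty e^{-tL_k}\delta_x(y)\,dt=(L_k^{-1})(x,y)=\frac{1}{\deg(y)}\sum_{n=0}^{\infty}P_k^{(n)}(x,y).
\end{equation*}

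Next I would let $k\to\infty$. The domains of the $Q_k$ increase and $Q_k=\ow{Q}$ throughout, so the associated Markovian semigroups increase to that of $\QD$: $e^{-tL_k}\delta_x(y)\uparrow e^{-t\LD}\delta_x(y)$ for all $t\ge0$ and $x,y\in V$, whence $\int_0^\infty e^{-tL_k}\delta_x(y)\,dt\uparrow\int_0^\infty e^{-t\LD}\delta_x(y)\,dt$ by monotone convergence. On the other side, for fixed $n$ the quantity $P_k^{(n)}(x,y)$ is the sum of $\prod_jP(z_{j-1},z_j)$ over length-$n$ paths $x=z_0,\dots,z_n=y$ lying inside $V_k$, so it increases to $P^{(n)}(x,y)$; hence $\sum_nP_k^{(n)}(x,y)\uparrow\sum_nP^{(n)}(x,y)$. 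Combining the two limits gives $\int_0^\infty e^{-t\LD}\delta_x(y)\,dt=\frac{1}{\deg(y)}\sum_{n\ge0}P^{(n)}(x,y)$, and reversibility of $P$ with respect to the degree, $\deg(x)P^{(n)}(x,y)=\deg(y)P^{(n)}(y,x)$, rewrites the right-hand side in the symmetric form $\frac1{\deg(x)}\sum_{n\ge0}P^{(n)}(y,x)$ of the asserted identity. (Both sides may equal $+\infty$, which by the earlier results happens precisely in the recurrent case.)

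The step that needs the most care is the monotone convergence $e^{-tL_k}\delta_x(y)\uparrow e^{-t\LD}\delta_x(y)$, i.e. the identification of $e^{-t\LD}$ with the minimal semigroup obtained by exhaustion; this rests on the monotone convergence theorem for closed (Markovian) quadratic forms, an enlargement of the form domain corresponding to an enlargement of the semigroup. If one prefers to stay strictly within the machinery already set up, the same truncation argument can be run at the level of resolvents through Proposition~\ref{correspondence resolvent G}: one computes $(L_k+\alpha)^{-1}(x,y)=\tfrac{1}{\deg(y)+\alpha}\sum_n(\widetilde P_k^{(\alpha)})^{(n)}(x,y)$ with $\widetilde P_k^{(\alpha)}=(D_k+\alpha)^{-1}A_k$, and lets first $k\to\infty$ and then $\alpha\to0^+$, all limits being monotone. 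The remaining ingredients — positive definiteness of $L_k$, convergence of the Neumann series, and the interchange of $\sum_n$ with $\lim_k$ — are routine.
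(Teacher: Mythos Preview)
Your approach is essentially the paper's: exhaust $V$ by finite sets, expand the truncated Green operator in a Neumann series in the substochastic matrix $P_k$, and pass to the limit by monotonicity. In fact, the alternative you sketch at the end---route through $(L_k+\alpha)^{-1}$ via Proposition~\ref{correspondence resolvent G}, take $k\to\infty$ and then $\alpha\to 0^+$, all monotone---is exactly what the paper does, invoking Theorem~\ref{approximation resolvent} for the first limit. Your main version differs only in that you set $\alpha=0$ from the start and appeal to semigroup monotone convergence $e^{-tL_k}\uparrow e^{-t\LD}$ instead; this trades one limit for a fact lying just outside the paper's toolbox (though it follows easily from Theorem~\ref{approximation resolvent} plus the resolvent--semigroup correspondence). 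Either route is fine.

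One remark on the indices: your derivation correctly produces
\[
\int_0^\infty e^{-t\LD}\delta_x(y)\,dt=\frac{1}{\deg(y)}\sum_{n\ge0}P^{(n)}(x,y)=\frac{1}{\deg(x)}\sum_{n\ge0}P^{(n)}(y,x),
\]
and the paper's own computation yields the same thing, since $[(D_{K_i}+\alpha)^{-1}A_{K_i}]^n\delta_x(y)$ is the $(y,x)$ entry of the $n$-th power. The displayed identity in the theorem has $P^{(n)}(x,y)$ with prefactor $1/\deg(x)$, which is not symmetric in $x,y$ and does not match what either argument proves; this appears to be a typo in the statement. Since the only use downstream (Corollary~\ref{equ recurrent graph recurrent df}) concerns simultaneous finiteness of both sides, the slip is harmless, but you were right to invoke reversibility rather than claim the formula as written.
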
 
\begin{proof}
Let $x,y \in V$. By Proposition \ref{correspondence resolvent G} we  obtain
$$\int_0^\infty e^{-t\LD}\delta_x(y)dt = \lim_{\alpha \to 0+} (\LD+\alpha)^{-1}\delta_x(y).$$
Pick an exhaustion $(K_i)$ of $V$ (i.e. $K_i \subseteq K_{i+1}$ and $\cup K_i = V$) with finite $K_i$  and  $x \in K_1$.  Theorem \ref{approximation resolvent} shows that the resolvent of $\QD$ can be approximated by 
$$(\LD+\alpha)^{-1}\delta_x = \lim_{i\to \infty} (L_{K_i}+\alpha)^{-1}\delta_x, $$
where $L_{K_i}$ is the restriction of $\widetilde{L}$ to $C(K_i)$ (in the sense described in Theorem~\ref{approximation resolvent}). We now compute the right-hand side of the previous equation. 

By  $A_{K_i}, D_{K_i}$ we  denote the restrictions of $A, D$ to $C(K_i)$ in the sense used in Theorem \ref{approximation resolvent}, i.e., $A_{K_i} := p_{K_i} A i_{K_i}$ and $D_{K_i} :=  p_{K_i} A i_{K_i}$, where $i_{K_i}: C(K_i) \to C(V)$ is the canonical inclusion and $p_{K_i}: C(V) \to C(K_i)$ is the restriction of a function to $K_i$. With this notation we obtain the matrix identity
\begin{align}
L_{K_i} + \alpha = D_{K_i}  - A_{K_i} + \alpha =  (D_{K_i} + \alpha)(I - (D_{K_i}+\alpha)^{-1}A_{K_i}) \label{equation:Matrix identity}
\end{align}
on the finite-dimensional space $C(K_i)$. The matrix $(I - (D_{K_i}+\alpha)^{-1}A_{K_i})$ is invertible since the operator norm of $(D_{K_i}+\alpha)^{-1}A_{K_i}$ considered as an operator on $\ell^{\infty} (K_i)$ is strictly less than $1$. To see this we let $f \in \ell^{\infty} (K_i)$ with $\|f\|_\infty \leq 1$ and observe
\begin{align*}
\|(D_{K_i}+\alpha)^{-1}A_{K_i}f\|_\infty &\leq \max_{x \in K_i} \frac{1}{\text{deg}(x)+\alpha} \sum_{y\in K_i} b(x,y)|f(y)| \\
& \leq \max_{x \in K_i} \frac{\text{deg}(x)}{\text{deg}(x)+\alpha} < 1.
\end{align*}

Inverting both sides of Equation~\eqref{equation:Matrix identity} and using the von Neumann series expansion for the inverse of $(I - (D_{K_i}+\alpha)^{-1}A_{K_i})$ yields for $y \in K_i$ 
\begin{align}
(L_{K_i}+\alpha)^{-1}\delta_x(y) &= (I - (D_{K_i}+\alpha)^{-1}A_{K_i})^{-1} (D_{K_i}+\alpha)^{-1}\delta_x (y) \notag \\
&= \frac{1}{\text{deg}(x)+\alpha}(I - (D_{K_i}+\alpha)^{-1}A_{K_i})^{-1}\delta_x (y)\notag\\
&= \frac{1}{\text{deg}(x)+\alpha} \sum_{n=0}^{\infty} \left[(D_{K_i}+\alpha)^{-1}A_{K_i}\right] ^n \delta_x (y). \label{finite approx of resolvent}
\end{align}
Furthermore, a direct calculation implies
$$ \lim_{\alpha \to 0+} \lim_{i\to \infty}\left[(D_{K_i}+\alpha)^{-1}A_{K_i}\right] ^n \delta_x (y)  = P^{(n)}(x,y).$$
Hence, in order to obtain the desired formula we need to pass to the limits under the sum in equation \eqref{finite approx of resolvent}. For this it suffices to show that  convergence in $i$ and afterwards convergence in $\alpha$ is monotone. We show by induction over $n$ that for $y \in K_i$ the inequality
$$\left[(D_{K_i}+\alpha)^{-1}A_{K_i}\right] ^n \delta_x (y) \leq \left[(D_{K_{i+1}}+\alpha)^{-1}A_{K_{i+1}}\right] ^n \delta_x (y) $$
holds. The case $n = 0$ is clear. Now assume we have shown the statement for all indices up to $n - 1$. Using the non-negativity of the quantity $\left[(D_{K_{i+1}}+\alpha)^{-1}A_{K_{i+1}}\right] ^{n-1} \delta_x (z)$ for $z \in K_{i+1}$, we obtain 
\begin{align*}
[(D_{K_i}  +\alpha &)^{-1}A_{K_i}] ^n \delta_x (y) =  \\ &= \frac{1}{\text{deg}(y)+\alpha}\sum_{z \in K_i} b(y,z)\left[(D_{K_i}+\alpha)^{-1}A_{K_i}\right] ^{n-1} \delta_x (z) \\
&\leq  \frac{1}{\text{deg}(y)+\alpha}\sum_{z \in K_i} b(y,z)\left[(D_{K_{i+1}}+\alpha)^{-1}A_{K_{i+1}}\right] ^{n-1} \delta_x (z) \\
&\leq \frac{1}{\text{deg}(y)+\alpha}\sum_{z \in K_{i+1}} b(y,z)\left[(D_{K_{i+1}}+\alpha)^{-1}A_{K_{i+1}}\right] ^{n-1} \delta_x (z)\\
&= \left[(D_{K_{i+1}}+\alpha)^{-1}A_{K_{i+1}}\right] ^n \delta_x (y).
\end{align*}
The above implies monotone convergence of the summands in $i$. A similar computation shows monotone convergence in $\alpha$. This finishes the proof.
\end{proof}

\begin{remark}
\begin{itemize}
\item The previous theorem is a version of in \cite[Theorem~4.34]{Che}. However, our proof uses a different approach. 
\item  If $(b,0)$ is a graph and $\LD$ the operator associated with $\QD$ on $\ell^2(V,1)$ then $\LD$ is a restriction of $\ow{L} = D - A$. In this sense $\LD$ equals the difference of certain $\ell^2$-restrictions of $D$ and $A$. As $\LD$, $D$ and $A$ can be unbounded on $\ell^2(V,1)$ the relation of their domains is not so clear. This is the reason why we had to use finite dimensional approximations in the previous proof. 
\item The proof of the previous theorem provides a rigorous version of the computation suggested in the discussion preceding \cite[Theorem 4.7]{JP}. Therefore, it might be considered as an $\ell^2$-analogue to \cite[Theorem 4.7]{JP}.
\end{itemize}
\end{remark}

\begin{coro} \label{equ recurrent graph recurrent df}
Let $(b,0)$ be connected and let $m$ be an arbitrary measure of full support. Let $\QD$ be the regular Dirichlet form associated with $(b,0)$ on $\ell^2(V,m)$. The random walk associated with $(b,0)$ is recurrent if and only if $\QD$ is recurrent.
\end{coro}
\begin{proof}
This is an immediate consequence of the previous two theorems and the fact that recurrence of $\QD$ does not depend on the underlying measure, see Corollary~\ref{Independence of underlying measure}.
\end{proof}

\begin{remark}
 The previous corollary seems to be widely believed among experts. However, we could not find a reference which gives the result in its full generality. A weak form of it can be found in \cite{Kal}.
\end{remark}

\begin{remark}
Let us briefly discuss the stochastic interpretation  of the formula in Theorem \ref{discrete time v.s. continuous time} by computing the involved quantities from a probabilistic view. We will omit technical details (such as construction of the related processes and measurability issues) and do computations on a formal level.

At first suppose we are given a Markov process $(X_t)_{t \geq 0}$ in continuous time with values in $V \cup \{\infty\}$ satisfying 
$$\mathbb{P}_x(X_t = y) = e^{-t\LD}\delta_y (x).$$
Here $\mathbb{P}_x$ denotes the probability under the condition that $X_0 = x$ (for a detailed discussion of the relation  of Markov processes and regular Dirichlet forms see \cite{FOT}). Let $\lambda$ be the Lebesgue measure on $\RR$. We then obtain 
\begin{align*}
\mathbb{E}_x [\lambda \{t > 0|\, X_t = y\}] &= \int_\Omega \int _0 ^\infty 1_{\{t > 0 | \, X_t(\omega) = y\}}(s) ds d\mathbb{P}_x(\omega)\\
&= \int_0 ^\infty \int _\Omega  1_{\{t > 0 | \, X_t(\omega) = y\}}(s)  d\mathbb{P}_x(\omega)ds\\
&= \int_0^\infty \mathbb{P}_x(X_s = y) ds\\
&= \int_0^\infty  e^{-s\LD}\delta_y (x) ds.
\end{align*}
Therefore, the integral $\int_0^\infty  e^{-t\LD}\delta_y (x) dt$ is equal to the expected time that $(X_t)_{t\geq 0}$ spends in $y$ provided that it started at $x$.

Now let us assume $(X_n)_{n \geq 0}$ is a random walk associated with $(b,0)$. Let $\sharp$ denote the counting measure on $\NN$. Then,
\begin{align*}
\mathbb{E}_x[\, \sharp \{n \geq 0 | \, X_n = y \}] &= \int_\Omega \sum_{n = 0} ^\infty 1_{\{\omega \in \Omega | X_n(\omega) = y \}} d\mathbb{P}_x (\omega)\\
&= \sum_{n = 0} ^\infty \mathbb{P}_x(X_n = y)\\
&=  \sum_{n = 0} ^\infty P^{(n)}(x,y).
\end{align*}
This shows that $\sum_{n = 0} ^\infty P^{(n)}(x,y)$ coincides with the expected number of visits of $(X_n)_{n\geq 0}$ to $y$ whenever it started at $x$.

\end{remark}

\section{Characterizations of Recurrence and Transience} \label{recurrence and transience}

\subsection{Classical characterizations of recurrence}

In this section we show continuous time analogues to characterizations of recurrence and transience which are known in the discrete time setting. In contrast to the discrete time theory we use the Dirichlet form methods developed above to deduce them and point out where the 'classical' results may be found. 

First we show that $\QD$ is transient whenever the graph $(b,0)$ supports a monopole of finite energy (Theorem \ref{monopol}). Afterwards, we characterize recurrence in terms of properties of the Yamasaki space $\mathbf{D}$ and the capacity of points (Theorem \ref{potential}). As a last classical characterization of recurrence, we show that it is equivalent to each superharmonic function of finite energy being constant (Theorem \ref{superharmonic}). The proofs of  Theorem \ref{monopol} and Theorem \ref{superharmonic} seem to be new while the one of Theorem \ref{potential} was suggested by Daniel Lenz.

As seen in Corollary~\ref{Independence of underlying measure}, recurrence of $\QD$ is independent of the underlying measure. Therefore, we will not indicate the $\ell^2$-space on which $\QD$ is considered in the statements of the theorems in this section.  
\begin{definition}[Monopole] A function $u \in \ow{D}$ is called a \emph{monopole of finite energy} if there exists some $x \in V$ such that $u$ satisfies
$$ \ow{L}u = \delta_x.$$
\end{definition}

The first theorem we  prove deals with the existence of such monopoles. Its discrete-time analogue is due to Lyons  \cite{Lyo} (see \cite[Theorem~3.33]{Soa} for related material).

\begin{theorem} \label{monopol}
Let $(b,0)$ be connected. The Dirichlet form $\QD$ is transient if and only if there exists a monopole of finite energy.
\end{theorem}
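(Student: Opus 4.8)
The plan is to prove both implications using the characterization of transience via the extended Dirichlet space (Theorem~\ref{char transience}) and the abstract characterization of recurrence (Theorem~\ref{char recurrence}), together with Green's formula (Lemma~\ref{green}).

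\emph{From a monopole to transience.} Suppose $u\in\ow{D}$ satisfies $\ow{L}u=\delta_x$ for some $x\in V$. I would argue by contradiction: assume $\QD$ is not transient, so by connectedness it is recurrent, and Theorem~\ref{char recurrence} gives a sequence $(u_n)\subseteq C_c(V)$ with $u_n\to 1$ pointwise and $\ow{Q}(u_n)\to 0$. Since $u\in\ow{D}$ and $u_n\in C_c(V)$, Green's formula applies:
\[
\ow{Q}(u,u_n)=\sum_{z\in V}(\ow{L}u)(z)u_n(z)m(z)=u_n(x)m(x)\to m(x)>0 .
\]
On the other hand, $|\ow{Q}(u,u_n)|\le \ow{Q}(u)^{1/2}\ow{Q}(u_n)^{1/2}\to 0$, a contradiction. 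Hence $\QD$ is transient. (One should note $\ow{Q}(u)<\infty$ is part of the definition of a monopole of finite energy, so Cauchy--Schwarz is legitimate.)

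\emph{From transience to a monopole.} Assume $\QD$ is transient; I want to produce $u\in\ow{D}$ with $\ow{L}u=\delta_x$. The natural candidate is $u=G\delta_x$, the $0$-th order resolvent applied to $\delta_x$, which is finite everywhere by Theorem~\ref{Transience} (transience of $e^{-t\LD}$). The first task is to show $u=G\delta_x\in D(\QD)_e=\overline{D(\QD)}^{\aV{\cdot}_o}$ (Proposition~\ref{char extended space}): I would take $u_\alpha=(\LD+\alpha)^{-1}\delta_x\in D(\LD)\subseteq D(\QD)$ and show $u_\alpha\to u$ pointwise as $\alpha\to 0+$ (Proposition~\ref{correspondence resolvent G}) while $\ow{Q}(u_\alpha)=\QD(u_\alpha)$ stays bounded; indeed $\QD(u_\alpha)\le\QD(u_\alpha)+\alpha\aV{u_\alpha}_2^2=\as{\delta_x,u_\alpha}=u_\alpha(x)m(x)\le G(x,x)m(x)<\infty$, so $(u_\alpha)$ is $\ow{Q}$-bounded, pointwise convergent, hence a $\QD$-Cauchy net approximating $u$ in the extended space; in particular $u\in\ow{D}$ with $\ow{Q}(u-u_{\alpha_n})\to 0$ along a sequence (Lemma~\ref{extended form}). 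Then for any $w\in C_c(V)$, using $\QD(u_\alpha,w)+\alpha\as{u_\alpha,w}=\as{\delta_x,w}$, Green's formula on the right-hand side and passing to the limit,
\[
\ow{Q}(u,w)=\lim_{\alpha\to 0+}\QD(u_\alpha,w)=\as{\delta_x,w}=\sum_{z\in V}\delta_x(z)w(z)m(z).
\]
Comparing with Green's formula $\ow{Q}(u,w)=\sum_z(\ow{L}u)(z)w(z)m(z)$ and letting $w=\delta_y$ range over $V$ yields $(\ow{L}u)(y)m(y)=\delta_x(y)m(y)$, i.e.\ $\ow{L}u=\delta_x$. Thus $u$ is a monopole of finite energy.

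\emph{Main obstacle.} The delicate point is the second direction: justifying that $G\delta_x$ lies in the extended Dirichlet space and that one may interchange $\lim_{\alpha\to 0+}$ with $\ow{Q}(\cdot,w)$. The $\ow{Q}$-boundedness of the resolvent net is the key estimate that makes this work, and it relies on transience precisely through $G(x,x)<\infty$. I also need $u\in\ow{F}$ so that $\ow{L}u$ is defined, but this follows from $u\in\ow{D}$ via Step~1 of Lemma~\ref{green}. A clean alternative, if one prefers to avoid nets, is to replace $\alpha\to 0+$ by a sequence $\alpha=1/n$ throughout and invoke Lemma~\ref{extended form} directly for the approximating sequence $u_{1/n}$.
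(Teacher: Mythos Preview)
Your first direction is sound. The contradiction argument via Theorem~\ref{char recurrence} works, though note that Theorem~\ref{char recurrence} only produces $u_n\in D(\QD)$, not $u_n\in C_c(V)$; you need either a further density step or to observe that Green's formula extends to $u_n\in D(\QD)$ here because $\ow{L}u=\delta_x\in\ell^2(V,m)$. The paper instead argues directly: from $|v(w)|m(w)=|\ow{Q}(v,u)|\le\ow{Q}(u)^{1/2}\QD(v)^{1/2}$ for $v\in C_c(V)$, combined with Lemma~\ref{difference functional}, one builds a reference function explicitly without invoking recurrence.

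The second direction has a genuine gap. You assert that ``$(u_\alpha)$ is $\ow{Q}$-bounded, pointwise convergent, hence a $\QD$-Cauchy net'', but boundedness plus pointwise convergence does \emph{not} imply $\QD$-Cauchy (think of an orthonormal sequence). Two repairs are available. First, you can verify the Cauchy property directly: for $\alpha<\beta$ the resolvent identity gives
\[
\QD(u_\alpha-u_\beta)+\alpha\aV{u_\alpha-u_\beta}_2^2
=(u_\alpha(x)-u_\beta(x))m(x)+(\alpha-\beta)\aV{u_\beta}_2^2
\le (u_\alpha(x)-u_\beta(x))m(x),
\]
and the right side tends to $0$ since $u_\alpha(x)\uparrow G(x,x)<\infty$. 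Second, you can bypass the Cauchy issue: by Theorem~\ref{char transience} the space $(D(\QD)_e,\QD)$ is a Hilbert space in which point evaluations are continuous (via the reference function inequality), so your bounded sequence $(u_{1/n})$ has a weak limit in $D(\QD)_e$ which must equal $u=G\delta_x$. Weak convergence already gives $\ow{Q}(u,w)=\lim_n\QD(u_{1/n},w)$ for $w\in C_c(V)$, which is all the final computation needs.

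The paper's own argument for this direction is shorter and avoids the resolvent limit entirely: since $(D(\QD)_e,\QD)$ is a Hilbert space and the evaluation $F_w\colon u\mapsto u(w)$ is continuous on it, the Riesz representation theorem yields $v\in D(\QD)_e\subseteq\ow{D}$ with $\QD(v,\varphi)=\varphi(w)$ for all $\varphi\in D(\QD)_e$; testing against $\varphi=\delta_y$ and using Green's formula gives $\ow{L}(m(w)v)=\delta_w$. Your constructive approach has the merit of identifying the monopole explicitly as $G\delta_x$, but the Riesz route sidesteps precisely the convergence issue you flagged as the main obstacle.
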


\begin{proof}
Assume $u \in \ow{D}$ satisfies $\ow{L}u = \delta_w$. We show the transience of $\QD$ by constructing a reference function $g$ as in Definition \ref{Definition transient space}. Let $v \in  C_c(V)$ be given. By Lemma \ref{green} and the Cauchy-Schwarz inequality we obtain
$$|v(w)|m(w) = \as{v,\delta_w} = \as{v,\ow{L}u} = \ow{Q}(v,u) \leq \QD(v)^{1/2}\ow{Q}(u)^{1/2}.$$
Since $C_c(V)$ is dense in $D(\QD)$ with respect to the form norm $\aV{\cdot}_Q$, this inequality extends to all $v \in D(\QD)$. Furthermore, Lemma \ref{difference functional} shows that for every $x\in V$ there exists a constant $K_x > 0$ with 
$$|v(w)-v(x)| \leq K_x \QD(v)^{1/2}$$
for every $v \in D(\QD)$. Combining these two inequalities we infer the existence of $C_x > 0$ such that for every $v \in D(\QD)$ the inequality
$$|v(x)| \leq C_x \QD(v)^{1/2}$$
holds. Now we define a reference function $g$ by setting
$$g(x) = \frac{a_x}{C_x m(x)},$$
where the $a_x$ are chosen strictly positive such that $g \in \ell^1(V,m) \cap \ell^{\infty}(V)$ and $\sum a_x = 1$. This finishes the first part of the proof.

On the contrary, let us assume $\QD$ is transient. By Definition \ref{Definition transient space} there exists a strictly positive $g \in \ell^{\infty}(V) \cap \ell^1(V,m)$ such that 
$$ \as{|u|,g} \leq \QD(u)^{1/2}, \text{ for every } u\in D(\QD).$$ 
By the definition of $D(\QD)_e$ and Lemma \ref{extended form}, the above inequality extends to all $u \in D(\QD)_e$. For a fixed $w \in V$, this implies the continuity of the linear functional 
$$F_w: D(\QD)_e \to \RR, \, u \mapsto u(w),$$  
with respect to the inner product $\QD$. Theorem \ref{char transience} yields that  $(D(\QD)_e, \QD)$ is a Hilbert space. Thus, by Riesz representation theorem there exists a function $v \in D(\QD)_e$ such that 
$$ u(w) = F_w(u) =  \QD(u,v)$$
for all $u \in D(\QD)_e$. By Lemma \ref{extended form} the inclusion $D(\QD)_e \subseteq \ow{D}$ holds. With the help of Lemma \ref{green} we  compute
$$(\ow{L}v)(x) = \frac{1}{m(x)}\as{\ow{L}v,\delta_x} = \frac{1}{m(x)}\ow{Q}(v,\delta_x) = \frac{1}{m(x)}\QD(v,\delta_x) = \frac{\delta_x(w)}{m(x)}.$$
This shows $\ow{L}(m(w)v) = \delta_w$ and finishes the proof. 
\end{proof}

For locally finite graphs the next classical characterization of recurrence is due to Yamasaki \cite{Yam1}.  It deals with the structure of the space $\mathbf{D}$ and shows that recurrence is equivalent to points having capacity zero, where the \emph{capacity} of $x \in V$ is defined by 
$$\text{cap}(x) =\inf \{\QD(v)\mid \, v \in C_c(V), \,  v(x)= 1\}.$$
\begin{theorem}\label{potential} Let $(b,0)$ be connected. The following assertions are equivalent:
\begin{itemize}
\setlength{\itemsep}{0pt}
\item[(i)] $\QD$ is recurrent.
\item[(ii)] $C_c(V)$ is dense in $\mathbf{D}$, i.e., $\mathbf{D} = \mathbf{D}_0$.
\item[(iii)] The constant function $1$ can be approximated in $\mathbf{D}$ by functions of $C_c(V)$. In this case, the approximating functions $e_n$ can be chosen to satisfy $0 \leq e_n \leq 1$. 
\item[(iv)] $\text{\emph{cap}}(o) = \inf \{\QD(v)\mid \, v \in C_c(V), \,  v(o)= 1\} = 0$.
\end{itemize}
\end{theorem}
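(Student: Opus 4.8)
The plan is to reduce everything to the abstract recurrence criterion of Theorem \ref{char recurrence} and the convergence criterion of Theorem \ref{convergence in D}, using the two approximation corollaries of the Yamasaki space as the workhorses. Throughout we have $c\equiv 0$, so $\ow{Q}(1)=0$ and in particular $1\in\mathbf{D}$. One device will be used repeatedly: if $(e_n)\subseteq C_c(V)$ and $e_n\to 1$ in $\mathbf{D}$, then the normal contraction $C(t)=\min\{\max\{t,0\},1\}$ yields $C\circ e_n\in C_c(V)$ with $0\le C\circ e_n\le 1$, $C\circ e_n\to 1$ pointwise, and $\ow{Q}(C\circ e_n)\le\ow{Q}(e_n)\to\ow{Q}(1)=0$ by Lemma \ref{Markov property}, so Theorem \ref{convergence in D} gives $C\circ e_n\to 1$ in $\mathbf{D}$; this already disposes of the last sentence of $(iii)$. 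For $(i)\Leftrightarrow(iii)$: if $\QD$ is recurrent, Theorem \ref{char recurrence} provides $(u_n)\subseteq D(\QD)$ with $u_n\to 1$ pointwise and $\QD(u_n)\to 0$; picking $w_n\in C_c(V)$ with $\aV{w_n-u_n}_Q\le 1/n$ (legitimate since $C_c(V)$ is $\aV{\cdot}_Q$-dense in $D(\QD)$) gives $w_n(x)\to 1$ for each $x$ and $\ow{Q}(w_n)^{1/2}\le\aV{w_n-u_n}_Q+\QD(u_n)^{1/2}\to 0$, whence $w_n\to 1$ in $\mathbf{D}$ by Theorem \ref{convergence in D}; this is $(iii)$. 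Conversely an approximating sequence $(e_n)\subseteq C_c(V)$ for $1$ in $\mathbf{D}$ lies in $D(\QD)$, converges to $1$ pointwise by Proposition \ref{structure of D}(a), and has $\QD(e_n)=\ow{Q}(e_n)\to 0$, so $\QD$ is recurrent by Theorem \ref{char recurrence}.

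The implication $(ii)\Rightarrow(iii)$ is trivial since $1\in\mathbf{D}$, and $(iii)\Rightarrow(ii)$ is the substantive step. Fix $u\in\mathbf{D}$; we must put $u$ in the $\aV{\cdot}_o$-closure of $C_c(V)$. By Corollary \ref{bounded convergence} it suffices to handle bounded $u$. A bounded $u$ decomposes as $u=u^{+}-u^{-}$ with $u^{\pm}=\max\{\pm u,0\}$, and $u^{\pm}\in\mathbf{D}$ by Lemma \ref{Markov property}; since the $\aV{\cdot}_o$-closure of $C_c(V)$ is a linear subspace, rescaling shows it suffices to handle $u$ with $0\le u\le 1$. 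For such $u$, take $(e_n)\subseteq C_c(V)$ with $0\le e_n\le 1$ and $e_n\to 1$ in $\mathbf{D}$ (available from $(iii)$ via the truncation device above); Corollary \ref{small convergence} then gives $e_n\wedge u\to u$ in $\mathbf{D}$, and the pointwise minimum $e_n\wedge u$ vanishes wherever $e_n$ does because $u\ge 0$, hence has finite support. Thus $u$ is approximable in $\mathbf{D}$ by elements of $C_c(V)$.

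For $(iii)\Leftrightarrow(iv)$: assuming $(iii)$, take $(e_n)\subseteq C_c(V)$ with $0\le e_n\le 1$ and $e_n\to 1$ in $\mathbf{D}$; then $e_n(o)\to 1$, so for large $n$ the functions $v_n=e_n/e_n(o)\in C_c(V)$ satisfy $v_n(o)=1$ and $\QD(v_n)=\ow{Q}(e_n)/e_n(o)^2\to 0$, giving $\text{cap}(o)=0$. Conversely, if $\text{cap}(o)=0$, choose $v_n\in C_c(V)$ with $v_n(o)=1$ and $\ow{Q}(v_n)\to 0$; Lemma \ref{difference functional} yields $|v_n(x)-v_n(o)|\le K_{o,x}\,\ow{Q}(v_n)^{1/2}\to 0$, hence $v_n\to 1$ pointwise, and since $\limsup\ow{Q}(v_n)=0=\ow{Q}(1)$, Theorem \ref{convergence in D} gives $v_n\to 1$ in $\mathbf{D}$, which is $(iii)$. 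The only step that is not pure bookkeeping is $(iii)\Rightarrow(ii)$: the crux is to chain the three reductions (truncation, positive/negative splitting, rescaling) so as to reach the regime $0\le u\le 1$ in which Corollary \ref{small convergence} applies, and to observe that intersecting with $e_n$, rather than using any other cutoff, is exactly what makes the approximants finitely supported while preserving $\mathbf{D}$-convergence.
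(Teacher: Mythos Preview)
Your proof is correct and follows essentially the same route as the paper for the core implications: both of you push $(i)\Leftrightarrow(iii)$ through Theorem~\ref{char recurrence} together with Theorem~\ref{convergence in D}, and both handle $(iii)\Rightarrow(ii)$ by the same three-step reduction (truncate via Corollary~\ref{bounded convergence}, split into positive and negative parts, rescale into $[0,1]$, then apply Corollary~\ref{small convergence} with the cut-off $e_n\wedge u$).

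The one genuine difference is in closing the loop through $(iv)$. The paper argues $(iv)\Rightarrow(i)$ by contraposition: if $\QD$ is transient, the reference function of Definition~\ref{Definition transient space} yields a constant $C>0$ with $\QD(v)^{1/2}\ge C|v(o)|$ for all $v\in D(\QD)$, forcing $\text{cap}(o)\ge C^2>0$. You instead give a direct proof of $(iv)\Rightarrow(iii)$: a minimizing sequence $v_n\in C_c(V)$ with $v_n(o)=1$ and $\ow{Q}(v_n)\to 0$ is forced by Lemma~\ref{difference functional} to converge pointwise to $1$, and Theorem~\ref{convergence in D} upgrades this to $\mathbf{D}$-convergence. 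Your argument is more self-contained (it stays entirely within the Yamasaki-space toolkit and never invokes the transience machinery of Chapter~3), while the paper's contrapositive makes the link to the reference-function definition of transience explicit. Both are short and clean; yours is arguably the more elementary.
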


\begin{proof}
'(i) $\Rightarrow$ (iii)':  Theorem \ref{char recurrence} yields the existence of a sequence $(f_n) \subseteq D(\QD)$, such that $f_n \to 1$ with respect to $\aV{\cdot}_o$. Since $C_c(V)$ is dense in $D(\QD)$ with respect to $\aV{\cdot}_Q$ there exist $\tilde{e}_n \in C_c(V)$ such that 
$$\aV{\tilde{e}_n - f_n}_Q \to 0 \text{ as } n \to \infty.$$ 
Let $e_n = (0 \vee \tilde{e}_n)\wedge 1$. By the Markov property of $\QD$ we obtain
$$\QD(e_n)^{1/2} \leq \QD(\tilde{e}_n)^{1/2} \leq \QD(\tilde{e}_n- f_n)^{1/2} +  \QD(f_n)^{1/2}.$$
Because $c \equiv 0$, the right side of the above inequality needs to converges to zero as $n\to \infty$. It is straightforward that $e_n \to 1$ pointwise. This implies 
$$\aV{1-e_n}_o \to 0 \text{ as } n\to \infty,$$
and shows (iii).

'(iii) $\Rightarrow$(ii)': This proof will be done in two steps.

\emph{Step 1}:  Let $u \in \mathbf{D}$, such that $0 \leq u \leq 1$ and let $(e_n) \subseteq C_c(V)$ be a sequence approximating $1$ in $\mathbf{D}$, such that $0 \leq e_n \leq 1$. Then, by Corollary \ref{small convergence} the sequence $u \wedge e_n$ converges to $u$ with respect to $\aV{\cdot}_o$. Furthermore, $u \wedge e_n \in C_c(V)$ showing that
$$u \in \overline{C_c(V)}^{\aV{\cdot}_o}.$$

\emph{Step 2}: Let $u \in \mathbf{D}$ such that $u \geq 0$. Then, Corollary \ref{bounded convergence} yields the convergence of $u\wedge N$ to $u$ with respect to $\aV{\cdot}_o$ as $N \to \infty$. Step 1 allows us to approximate $ u \wedge N$ by functions of $C_c(V)$. For general $u \in \mathbf{D}$ we can split $u$ in its positive and negative part which both belong to $\mathbf{D}$.  This shows $(ii)$.

'(ii) $\Rightarrow$ (i)': Proposition \ref{char extended space} shows that $D(\QD)_e$ is the closure of $D(\QD)$ in $\mathbf{D}$. Since $C_c(V) \subseteq D(\QD)$, condition (ii) implies the equality $D(\QD)_e = \mathbf{D}$.  From this and $c \equiv 0$ we infer $1 \in D(\QD)_e $ and  $\QD(1) = 0$. Now Theorem \ref{char recurrence} yields (i).

'(iii) $\Rightarrow$ (iv)': This is obvious noting that the sequence in (iii) can be chosen to satisfy $e_n(o) = 1$.

'(iv) $\Rightarrow$ (i)': Assume $\QD$ is transient. By Definition \ref{Definition transient space} there exists a constant $C > 0$ such that for any $v \in D(\QD)$ the inequality  $\QD(v)^{1/2} \geq C |v(o)|$ holds. In particular,
$$\inf \{\QD(v) \mid \, v \in C_c(V), \,  v(o)= 1\} \geq C^2 > 0. $$
This finishes the proof.
\end{proof}

\begin{remark}
  For further references on the history of the previous theorem and related results see \cite[Chapter~3.7]{Soa}.
\end{remark}

The last classical which we prove deals with \emph{superharmonic functions of finite energy}, i.e., functions $u \in \ow{D}$ satisfying 
$$\ow{L}u \geq 0.$$
It is an  analogous to Theorem 3.34 of \cite{Soa}.

\begin{theorem} \label{superharmonic}
Let $(b,0)$ be connected. The Dirichlet form $\QD$ is recurrent if and only if any superharmonic function of finite energy is constant.
\end{theorem}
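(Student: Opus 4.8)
The statement is an equivalence, so I would prove both directions separately. For the easy direction, suppose $\QD$ is recurrent and let $u \in \ow{D}$ satisfy $\ow{L}u \geq 0$. By Corollary \ref{bounded convergence} I may truncate and assume $u$ is bounded (and after adding a constant and scaling, say $0 \leq u \leq 1$); I should check that truncation preserves superharmonicity for bounded $u$, or more carefully pass through the truncations at the end. Theorem \ref{potential} gives a sequence $(e_n) \subseteq C_c(V)$ with $0 \leq e_n \leq 1$ and $\aV{e_n - 1}_o \to 0$. The idea is to test the superharmonicity of $u$ against $e_n$ via Green's formula (Lemma \ref{green}): $0 \leq \sum_x (\ow{L}u)(x) e_n(x) m(x) = \ow{Q}(u, e_n)$. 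Since $e_n \to 1$ in $\mathbf{D}$ and $\ow{Q}(u,\cdot)$ is continuous on $\mathbf{D}$ (it is $\aV{\cdot}_o$-bounded because $\ow{Q}(u,v)^2 \leq \ow{Q}(u)\ow{Q}(v) \leq \ow{Q}(u)\aV{v}_o^2$), we get $\ow{Q}(u,e_n) \to \ow{Q}(u,1) = 0$ — here I use that $1 \in \ow{D}$ with $\ow{Q}(1) = 0$ since $c \equiv 0$. Hence $\lim_n \sum_x (\ow{L}u)(x)e_n(x)m(x) = 0$. But $(\ow{L}u)(x)e_n(x)m(x) \geq 0$ increases pointwise (after passing to a subsequence, or using that one can choose $e_n$ increasing on any fixed exhaustion) to $(\ow{L}u)(x)m(x)$, so by Fatou/monotone convergence $\sum_x (\ow{L}u)(x)m(x) = 0$, forcing $\ow{L}u \equiv 0$, i.e. $u$ is harmonic. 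A harmonic function of finite energy: I then want it constant. One clean way is to test $\ow{Q}(u, u e_n) $ — or better, use $\ow{Q}(u,v) = 0$ for all $v \in C_c(V)$ and the density/approximation of $u$ by $C_c(V)$-functions provided by Theorem \ref{potential}(ii): since $\QD$ recurrent means $C_c(V)$ dense in $\mathbf{D} \supseteq D(\QD)_e$, and $u \in \mathbf{D}$, pick $v_n \to u$ in $\mathbf{D}$ with $v_n \in C_c(V)$; then $\ow{Q}(u) = \lim_n \ow{Q}(u,v_n) = \lim_n \sum_x (\ow{L}u)(x) v_n(x) m(x) = 0$. So $\ow{Q}(u) = 0$, and since $(b,0)$ is connected this means $u$ is constant.

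\textbf{The converse.} Suppose every superharmonic function of finite energy is constant; I want $\QD$ recurrent. I would argue by contraposition: assume $\QD$ is transient and produce a non-constant superharmonic function of finite energy — in fact a monopole will do. By Theorem \ref{monopol}, transience of $\QD$ gives a function $u \in \ow{D}$ with $\ow{L}u = \delta_w$ for some $w \in V$. Then $\ow{L}u = \delta_w \geq 0$, so $u$ is superharmonic of finite energy; it cannot be constant, since a nonzero constant $c$ has $\ow{L}c = 0 \neq \delta_w$ (as $c \equiv 0$ in the potential, $\ow{L}$ of a constant vanishes), and $0$ is not a monopole either. This contradicts the hypothesis, so $\QD$ must be recurrent.

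\textbf{Main obstacle.} The delicate point is the first direction: rigorously justifying the interchange of limit and sum in $\lim_n \sum_x (\ow{L}u)(x) e_n(x) m(x) = \sum_x (\ow{L}u)(x) m(x)$ when $\ow{L}u$ need not be $\ell^1$, and handling the case of unbounded or sign-changing $u$ via truncation while preserving the superharmonicity inequality under the nonlinear cutoff $C(t) = (0 \vee t) \wedge N$. A normal contraction does not in general preserve $\ow{L}u \geq 0$, so I would instead keep $u$ general, use that $e_n$ can be taken to increase to $1$ along a fixed exhaustion (so $(\ow{L}u)_+ e_n m$ and $(\ow{L}u)_- e_n m$ each converge monotonically), split $\ow{L}u = (\ow{L}u)_+ - (\ow{L}u)_- = (\ow{L}u)_+$, and apply monotone convergence directly — this sidesteps truncation entirely. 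The remaining care is simply to confirm $\ow{Q}(u,e_n) \to 0$, which follows from $|\ow{Q}(u,e_n) - \ow{Q}(u,1)| = |\ow{Q}(u, e_n - 1)| \leq \ow{Q}(u)^{1/2}\ow{Q}(e_n-1)^{1/2} \leq \ow{Q}(u)^{1/2}\aV{e_n-1}_o \to 0$.
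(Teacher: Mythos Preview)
Your proposal is correct and follows essentially the same overall architecture as the paper: for the forward direction, first show that a superharmonic $u \in \ow{D}$ must in fact be harmonic, then show $\ow{Q}(u)=0$ by approximating $u$ with $C_c(V)$-functions (via Theorem~\ref{potential}(ii)) and using Green's formula; the converse via the monopole from Theorem~\ref{monopol} is identical to the paper's.

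The one genuine difference is in how you establish $\ow{L}u = 0$. The paper argues by contradiction: if $(\ow{L}u)(w)>0$ at some vertex, then for every $v\in C_c(V)$ one has $|v(w)|(\ow{L}u)(w)m(w)\le \as{|v|,\ow{L}u}=\ow{Q}(|v|,u)\le \QD(v)^{1/2}\ow{Q}(u)^{1/2}$, which is exactly the kind of pointwise bound that, as in the proof of Theorem~\ref{monopol}, forces transience. You instead test against the sequence $(e_n)$ from Theorem~\ref{potential}(iii), use $\ow{Q}(u,e_n)\to 0$, and apply Fatou to the nonnegative summands $(\ow{L}u)(x)e_n(x)m(x)$ to conclude $\sum_x(\ow{L}u)(x)m(x)=0$ and hence $\ow{L}u\equiv 0$. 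Both arguments are short; the paper's has the advantage of recycling the transience machinery already in place, while yours is self-contained and avoids the detour through a reference function. Note also that your worry about monotonicity of $(e_n)$ is unnecessary: since $\ow{L}u\ge 0$ and $e_n\to 1$ pointwise, plain Fatou gives $\sum_x(\ow{L}u)(x)m(x)\le \liminf_n \sum_x(\ow{L}u)(x)e_n(x)m(x)=0$ directly, so no subsequence or truncation is needed.
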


\begin{proof}
Assume $\QD$ is recurrent and let $u \in \ow{D}$ with $\ow{L}u \geq 0$ be given. As a first step we show that $u$ is harmonic, i.e., $  \ow{L}u = 0$.  Assume that there exists a $w \in V$ such that $\ow{L}u(w) > 0$. By Lemma \ref{green} we obtain for all $v \in C_c(V)$
$$|v(w)|(\ow{L}u)(w)m(w) \leq \as{|v|,\ow{L}u} = \ow{Q}(|v|,u) \leq \QD(v)^{1/2} \ow{Q}(u)^{1/2}.$$
Following the first part of the proof of Theorem \ref{monopol}, such an inequality implies transience of $\QD$. Hence, we conclude $\ow{L}u = 0$. 

Next, we show $|u(x)-u(y)| = 0$ for all $x,y \in V$. Since $\QD$ is recurrent part $(ii)$ of Theorem \ref{potential} implies the existence of a sequence $(u_n) \subseteq C_c(V)$ with $\aV{u-u_n}_o \to 0$. Furthermore, by Lemma \ref{difference functional} for each $x,y \in V$ there exists a constant $K_{x,y} > 0,$ such that 
$$|u(x)-u(y)| \leq K_{x,y}\ow{Q}(u)^{1/2}.$$ 
Combining these obervations and Lemma \ref{green} we obtain
\begin{align*}
|u(x)-u(y)| &\leq  K_{x,y}\ow{Q}(u)^{1/2} = K_{x,y}\lim_{n \to \infty}\ow{Q}(u,u_n)^{1/2} \\&= K_{x,y}\lim_{n \to \infty}\as{\ow{L}u,u_n}^{1/2}=0.
\end{align*}
This proves one implication.

On the contrary assume $\QD$ is transient. By Theorem \ref{monopol} there exists a monopole of finite energy. This monopole clearly is superharmonic and nonconstant.
\end{proof}

\begin{remark}
In the literature the normalized Laplacian, i.e., the operator $\ow{L}_{b,0,\deg}$ is used to state analogous theorems to Theorem \ref{monopol} and Theorem \ref{superharmonic} for the discrete time case. 
\end{remark}

\subsection{New criteria for recurrence}

In this section we provide two more criteria for recurrence which seem to be new. The first one asks whether certain integrals vanish (Theorem \ref{integral recurrence}), while the second one deals with the validity of Green's formula for a different situation than in Lemma \ref{green} (Theorem \ref{boundary recurrence}). Both criteria were motivated by recent works. The first one is an analogue to a result of \cite{GM}, while the second one is a version of Theorem 4.6 in \cite{JP} for not necessarily locally finite graphs.  

\begin{theorem} \label{integral recurrence}
Let $(b,0)$ be connected. The form $\QD$ is recurrent if and only if 
$$\sum_{x \in V} \ow{L}u(x)m(x) = 0$$
for all $u \in \ow{D}$ with $\ow{L}u \in \ell^1(V,m)$.
\end{theorem}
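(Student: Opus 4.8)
The plan is to prove both implications separately, using the characterization of recurrence from Theorem \ref{char recurrence} together with Green's formula (Lemma \ref{green}) and the approximation results for $\mathbf{D}$.

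\textbf{The forward direction.} Assume $\QD$ is recurrent and let $u \in \ow{D}$ with $\ow{L}u \in \ell^1(V,m)$. By Theorem \ref{potential}(iii), there is a sequence $(e_n) \subseteq C_c(V)$ with $0 \leq e_n \leq 1$ and $\aV{e_n - 1}_o \to 0$; in particular $e_n \to 1$ pointwise and $\ow{Q}(e_n) \to 0$ (since $c \equiv 0$). By Green's formula applied to $u \in \ow{D}$ and $e_n \in C_c(V)$,
$$\sum_{x \in V} \ow{L}u(x) e_n(x) m(x) = \ow{Q}(u,e_n) \leq \ow{Q}(u)^{1/2}\ow{Q}(e_n)^{1/2} \to 0.$$
On the left-hand side, since $\ow{L}u \in \ell^1(V,m)$, $\aV{e_n}_\infty \leq 1$, and $e_n \to 1$ pointwise, dominated convergence gives $\sum_x \ow{L}u(x) e_n(x) m(x) \to \sum_x \ow{L}u(x) m(x)$. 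Hence $\sum_x \ow{L}u(x) m(x) = 0$.

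\textbf{The converse direction.} Assume $\QD$ is not recurrent, hence (by connectedness and Theorem \ref{char recurrence}) transient. Then by Theorem \ref{monopol} there is a monopole of finite energy: some $u \in \ow{D}$ and $w \in V$ with $\ow{L}u = \delta_w$. This $u$ satisfies $\ow{L}u = \delta_w \in \ell^1(V,m)$ but $\sum_{x \in V} \ow{L}u(x) m(x) = m(w) \neq 0$, contradicting the right-hand condition. So the vanishing-integral property forces recurrence.

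\textbf{Main obstacle.} The only delicate point is justifying the interchange of limit and sum on the left-hand side of the Green's formula identity; the domination is immediate from $\ow{L}u \in \ell^1(V,m)$ and $|e_n| \leq 1$, so this is routine. The structural content is entirely supplied by Theorem \ref{potential} (to get the approximating sequence bounded by $1$) and Theorem \ref{monopol} (to get a witness in the transient case), so once those are in hand the proof is short. One should double-check that $c \equiv 0$ is being used consistently (it is, both to ensure $\ow{Q}(e_n) \to 0$ from $\aV{e_n-1}_o \to 0$ and because Theorem \ref{monopol} is stated for graphs $(b,0)$), which matches the standing assumption in this part of the text.
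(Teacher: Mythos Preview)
Your proof is correct and follows essentially the same route as the paper's: the forward direction uses the approximating sequence $(e_n)\subseteq C_c(V)$ from Theorem~\ref{potential}(iii) together with Green's formula and dominated convergence, and the converse invokes the monopole from Theorem~\ref{monopol}. The only cosmetic slip is that the Cauchy--Schwarz bound should read $|\ow{Q}(u,e_n)|\le \ow{Q}(u)^{1/2}\ow{Q}(e_n)^{1/2}$ (with absolute value), but this does not affect the argument.
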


\begin{proof}
Let $\QD$ be recurrent and $u  \in \ow{D}$, such that $\ow{L}u \in \ell^1(V,m)$. By Theorem \ref{potential} there exists a sequence $e_n$ in $C_c(V)$ satisfying $\aV{e_n - 1}_o \to 0$ and $0 \leq e_n \leq 1$. We infer by Lebesgue's theorem and Lemma \ref{green} 
\begin{align*}
 \sum_{x \in V} \ow{L}u(x)m(x) & = \lim_{n \to \infty}\sum_{x \in V} e_n(x) \ow{L}u(x)m(x) \\
 &= \lim_{n \to \infty} \ow{Q}(e_n, u) = 0.
\end{align*}
On the contrary assume $\QD$ is transient. Then, Theorem \ref{monopol} yields the existence of a function $v \in \ow{D}$ satisfying $\ow{L}v = \delta _w \in \ell^1(V,m)$ for some $w \in V$. Obviously 
$$\sum_{x \in V} \ow{L}v(x)m(x) = m(w) \neq 0.$$
This finishes the proof.
\end{proof}

For proving Green's formula in Lemma \ref{green} we needed that one of the functions had compact support. As a last characterization for recurrence we show that recurrence is equivalent to the validity of Green's formula for a different class of functions. We introduce the \emph{boundary term} 
$$R:D_\infty \times D^1 \to \RR$$
 by
$$R(u,v) = \ow{Q}(u,v) - \as{u,\ow{L}v}.$$
Here we used the notation $D_\infty = \ow{D} \cap \ell^{\infty}(V)$ and $D^1 = \{v \in \ow{D}\,| \, \ow{L}v \in \ell^1(V,m)\}$.

\begin{theorem} \label{boundary recurrence} Let $(b,0)$ be connected. The Dirichlet form $\QD$ is recurrent if and only if $R \equiv 0$.  
\end{theorem}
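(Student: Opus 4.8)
The plan is to prove both implications by exploiting the characterizations of recurrence already established, especially Theorem \ref{integral recurrence} and Theorem \ref{potential}, together with Green's formula (Lemma \ref{green}). The two directions are quite different in character: one is an approximation argument, the other a construction of a counterexample out of a monopole.

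First I would prove that recurrence of $\QD$ implies $R \equiv 0$. Fix $u \in D_\infty$ and $v \in D^1$; I must show $\ow{Q}(u,v) = \as{u,\ow{L}v}$. By Theorem \ref{potential}, recurrence gives a sequence $(e_n) \subseteq C_c(V)$ with $0 \le e_n \le 1$ and $\aV{e_n - 1}_o \to 0$; in particular $e_n \to 1$ pointwise and $\ow{Q}(e_n) \to 0$. The idea is to insert the cut-offs $e_n u \in C_c(V)$ into Green's formula, writing $\ow{Q}(e_n u, v) = \as{e_n u, \ow{L}v}$, and then pass to the limit on both sides. The right-hand side converges to $\as{u,\ow{L}v}$ by dominated convergence, since $u \in \ell^\infty(V)$, $\ow{L}v \in \ell^1(V,m)$, and $0 \le e_n \le 1$. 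For the left-hand side I would split $\ow{Q}(e_n u, v)$ using the product-rule-type expansion of the difference $(e_n u)(x) - (e_n u)(y)$, obtaining roughly $\ow{Q}(e_n u, v) = \sum_{x,y} b(x,y)\, e_n(x)(u(x)-u(y))(v(x)-v(y))/2 + \sum_{x,y} b(x,y)\, u(y)(e_n(x)-e_n(y))(v(x)-v(y))/2$. The first sum tends to $\ow{Q}(u,v)$ by dominated convergence (dominating function $b(x,y)|u(x)-u(y)||v(x)-v(y)|$, which is summable by Cauchy--Schwarz since $u,v \in \ow{D}$), while the second is bounded by $\aV{u}_\infty\, \ow{Q}(e_n)^{1/2}\ow{Q}(v)^{1/2} \to 0$. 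This yields $\ow{Q}(u,v) = \as{u,\ow{L}v}$, i.e. $R(u,v) = 0$.

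For the converse I would argue by contraposition: if $\QD$ is transient, then $R \not\equiv 0$. By Theorem \ref{monopol}, transience yields a monopole $u \in \ow{D}$ with $\ow{L}u = \delta_w$ for some $w \in V$. This $u$ lies in $D^1$ since $\delta_w \in \ell^1(V,m)$. It need not be bounded, but I can truncate: set $u_N = ((-N)\vee u)\wedge N \in D_\infty$, and by Corollary \ref{bounded convergence} we have $u_N \to u$ in $\mathbf{D}$, hence $\ow{Q}(u_N, u) \to \ow{Q}(u)$. If $R$ were identically zero, then $R(u_N, u) = 0$ for all $N$, i.e. $\ow{Q}(u_N,u) = \as{u_N, \ow{L}u} = \as{u_N,\delta_w} = u_N(w) m(w)$. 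Letting $N \to \infty$ gives $\ow{Q}(u) = u(w)m(w)$. On the other hand, Green's formula (Lemma \ref{green}) applied with a cutoff, or the defining property, gives $\ow{Q}(u) = \as{\ow{L}u, u}$ heuristically $= u(w)m(w)$ as well, so this alone is not yet a contradiction; instead I would test $R$ against $v = u \in D^1$ and $u_N \in D_\infty$ and compare with a direct computation, or more cleanly, observe that $R$ being zero would force $\ow{Q}(u, \mathbf{1}) $ to make sense and vanish, contradicting $\ow{L}u = \delta_w$. The cleanest route: since $R\equiv 0$ would (by the first implication's mechanism read backwards, or directly) let us run the argument of Theorem \ref{integral recurrence} — namely $\sum_x \ow{L}u(x) m(x) = 0$ for all $u \in D^1$ — which fails for the monopole since $\sum_x \delta_w(x) m(x) = m(w) \ne 0$. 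So I would phrase the converse as: $R \equiv 0$ implies (taking $v \in D^1$ and bounded cut-offs $u_N \uparrow$, using that constants are approximated by the $u_N$ pointwise only in the recurrent case — here instead I pair with $u = $ a fixed element) the vanishing of $\sum_x \ow{L}v(x)m(x)$, whence Theorem \ref{integral recurrence} gives recurrence.

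\textbf{Main obstacle.} The delicate point is the converse direction: extracting a genuine contradiction from $R \equiv 0$ in the transient case. The subtlety is that in $R(u,v)$ the first argument must be bounded, so one cannot directly plug the (possibly unbounded) monopole into the first slot, and one must justify the limiting procedure $u_N \to u$ carefully — in particular that $\as{u_N, \ow{L}v} \to \sum_x \ow{L}v(x) m(x)$ requires $\ow{L}v \in \ell^1(V,m)$ and $u_N \to 1$... which is \emph{not} available off recurrence. The right fix is presumably to keep $v$ (the monopole) in the \emph{second} slot and let the first slot run over a sequence in $C_c(V)$ or over bounded approximants of $1$ that exist unconditionally only when things are recurrent — so the honest argument is the contrapositive via Theorem \ref{integral recurrence}, showing $R \equiv 0 \Rightarrow \sum_x \ow{L}v(x)m(x)=0$ for all $v\in D^1$ by approximating $v$ itself in $\mathbf{D}$ by bounded functions and using $R \equiv 0$ on each, then invoking Theorem \ref{integral recurrence}. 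Pinning down exactly which approximation is legitimate without presupposing recurrence is where the care is needed.
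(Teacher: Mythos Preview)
Your forward direction is correct and close in spirit to the paper's, though the paper takes a slightly shorter route: by Theorem~\ref{potential}(ii), $C_c(V)$ is dense in $\mathbf{D}$, so one can approximate $u$ itself (not just the constant $1$) by functions $u_n \in C_c(V)$ with $\aV{u_n - u}_o \to 0$ and $\aV{u_n}_\infty \le \aV{u}_\infty$; then Green's formula gives $\ow{Q}(u_n,v) = \as{u_n,\ow{L}v}$, and both sides pass to the limit directly (the left by $\aV{\cdot}_o$-continuity of $\ow{Q}(\cdot,v)$, the right by dominated convergence). Your product-rule argument with $e_n u$ also works, it is just a bit more laborious.

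The gap is in the converse. You circle around the right idea but never land on the one-line observation that settles it: since $c \equiv 0$, the constant function $1$ satisfies $\ow{Q}(1) = 0$, hence $1 \in \ow{D}$, and of course $1 \in \ell^\infty(V)$. Therefore $1 \in D_\infty$ \emph{unconditionally}, with no need for any approximation or any recurrence hypothesis. For any $v \in D^1$ one then has
\[
R(1,v) = \ow{Q}(1,v) - \as{1,\ow{L}v} = 0 - \sum_{x\in V} (\ow{L}v)(x)\,m(x).
\]
If $\QD$ is transient, Theorem~\ref{monopol} produces a monopole $v \in \ow{D}$ with $\ow{L}v = \delta_w$, so $v \in D^1$ and $R(1,v) = -m(w) \neq 0$. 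That is the entire argument.

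Your proposed route in the ``main obstacle'' paragraph --- approximating $v$ in $\mathbf{D}$ by bounded truncations $v_N$ and applying $R \equiv 0$ to each --- does not work as written: there is no reason $\ow{L}v_N \in \ell^1(V,m)$, so $v_N$ need not lie in $D^1$, and $R(\cdot,v_N)$ is not even defined. The approximation is on the wrong side. The point is that the first slot of $R$ already contains the function you want (namely $1$); no limiting procedure is needed there.
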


\begin{proof}
Assume $\QD$ is recurrent. Let $u \in D_\infty $ and $v \in D^1$ be given. Then, Theorem \ref{potential} yields the existence of a sequence $(u_n) \subseteq C_c(V)$ converging to $u$ with respect to $\aV{\cdot}_o$. Without loss of generality, this sequence can be chosen to be uniformly bounded by $\aV{u}_\infty$. Then, Lebesgue's theorem and Lemma \ref{green} yield
$$\ow{Q}(u,v) = \lim_{n\to \infty}\ow{Q}(u_n,v) = \lim_{n\to \infty}\as{u_n, \ow{L}v} = \as{u,\ow{L}v}.$$
This implies $R \equiv 0$.

On the contrary assume $\QD$ is transient. By Theorem \ref{monopol} there exists a function $v \in D^1$, such that 
$$ \sum_{x \in V} \ow{L}v(x)m(x) \neq 0.$$
Since $1 \in D_\infty$ and $\ow{Q}(1,v) = 0$, this implies $R(1,v) \neq 0$ which finishes the proof.

\end{proof}

\begin{remark} 
The previous theorem was motivated by \cite[Theorem~4.6]{JP}. This theorem deals with a boundary term pairing a space spanned by certain monopoles and dipoles and the functions of finite energy. However, the boundary representation that is used to deduce the result there seems to hold true only for locally finite graphs. We will explain some details of their computation below. 
\end{remark}

Let us now consider the case when $(b,0)$ is locally finite. We can then compute the boundary term $R$ by a limiting procedure. First we fix some notation. For a subgraph $W \subseteq V$ let 
$$\text{bd} \, W = \{x \in W| \text{ there exists }y \in V \setminus W \text{ such that } x \sim y\}$$
be the set of all vertices in $W$  which are connected with the complement of $W$. Furthermore, let 
$$\text{int}\, W = W\setminus \text{bd}W.$$
Note that $x \in \text{int}W$ and $y \sim x$ implies $y \in W$. For $u \in \ow{D}$ and $x \in \text{bd}W$ we let the outward normal derivative with respect to $W$ be defined by
$$(\partial_W u)(x) = \sum_{y \in W}b(x,y)(u(x)-u(y)). $$
With it we can compute the boundary term $R$ as in \cite{JP}. 

\begin{prop}
Let $(b,0)$ be locally finite. For $u \in D_\infty$ and $v \in D^1$ the boundary term $R$ is given by 
 $$R(u,v) = \lim_{n \to \infty} \sum_{x \in \text{bd}V_n}u(x)(\partial_{V_n} v)(x),$$
where $(V_n)$ is an increasing sequence of finite subsets of $V$ with $\cup _n V_n = V.$
\end{prop}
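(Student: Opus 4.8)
The plan is to compute $R(u,v)$ by exhausting $V$ with the finite sets $V_n$ and comparing $\ow{Q}(u,v)$ and $\as{u,\ow{L}v}$ to their truncated analogues. First I would fix an increasing sequence $(V_n)$ of finite subsets with $\bigcup_n V_n = V$ and, for each $n$, introduce the restricted form
$$\ow{Q}_{V_n}(u,v) = \frac{1}{2}\sum_{x,y \in V_n} b(x,y)(u(x)-u(y))(v(x)-v(y))$$
together with the partial sum $\sum_{x \in V_n}u(x)(\ow{L}v)(x)m(x)$. Since $u \in D_\infty$ and $v \in \ow{D}$, absolute convergence (as in the proof of Lemma \ref{green}) gives $\ow{Q}_{V_n}(u,v) \to \ow{Q}(u,v)$, and since $\ow{L}v \in \ell^1(V,m)$ while $u$ is bounded, dominated convergence gives $\sum_{x \in V_n}u(x)(\ow{L}v)(x)m(x) \to \as{u,\ow{L}v}$. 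Hence
$$R(u,v) = \lim_{n\to\infty}\Bigl(\ow{Q}_{V_n}(u,v) - \sum_{x \in V_n}u(x)(\ow{L}v)(x)m(x)\Bigr),$$
provided both limits exist, which they do.

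The heart of the matter is an algebraic identity relating the discrepancy inside the bracket to the normal-derivative sum. I would expand $\sum_{x \in V_n}u(x)(\ow{L}v)(x)m(x) = \sum_{x\in V_n}u(x)\sum_{y\in V}b(x,y)(v(x)-v(y))$ and split the inner sum over $y\in V$ into $y \in V_n$ and $y \notin V_n$. The $y\in V_n$ part, after symmetrizing in $x$ and $y$ (using (b1) and local finiteness so all sums are finite), yields exactly $\ow{Q}_{V_n}(u,v)$ plus an extra contribution; more precisely the standard discrete Green identity on the finite set $V_n$ reads
$$\ow{Q}_{V_n}(u,v) = \sum_{x \in V_n}u(x)\sum_{y\in V_n}b(x,y)(v(x)-v(y)).$$
Subtracting this from $\sum_{x\in V_n}u(x)(\ow{L}v)(x)m(x)$ leaves precisely $\sum_{x\in V_n}u(x)\sum_{y\notin V_n}b(x,y)(v(x)-v(y))$. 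Because $V_n$ is finite and $(b,0)$ is locally finite, $b(x,y)=0$ whenever $x \in \text{int}V_n$ and $y \notin V_n$, so the only surviving terms are those with $x \in \text{bd}V_n$, and there the inner sum over $y \notin V_n$ agrees with the sum over $y \in V$ minus the sum over $y\in V_n$; but since $(\partial_{V_n}v)(x) = \sum_{y\in V_n}b(x,y)(v(x)-v(y))$ and $(\ow{L}v)(x)m(x) = \sum_{y\in V}b(x,y)(v(x)-v(y))$, one gets that the leftover equals exactly $-\sum_{x\in\text{bd}V_n}u(x)(\partial_{V_n}v)(x)$ after re-bookkeeping. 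Combining, $\ow{Q}_{V_n}(u,v) - \sum_{x\in V_n}u(x)(\ow{L}v)(x)m(x) = \sum_{x\in\text{bd}V_n}u(x)(\partial_{V_n}v)(x)$, and passing to the limit gives the claim.

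The main obstacle is the careful bookkeeping of which edges cross $\text{bd}V_n$ and making sure the symmetrization of the $y\in V_n$ double sum is legitimate; local finiteness is what makes every rearrangement a finite-sum manipulation and guarantees $(\partial_{V_n}v)(x)$ only involves finitely many neighbours. One should double-check the sign and the precise relation between $\partial_{V_n}v$ and the difference $(\ow{L}v)(x)m(x) - (\partial_{V_n}v)(x)$, i.e. that the latter equals the sum over exterior neighbours $\sum_{y\notin V_n}b(x,y)(v(x)-v(y))$. Everything else — the two convergence statements feeding the limit — is routine given the hypotheses $u\in D_\infty$, $v \in D^1$ and the absolute-convergence estimates already established in the proof of Lemma \ref{green}.
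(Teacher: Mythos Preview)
Your overall strategy matches the paper's, but the finite-level identity you assert,
\[
\ow{Q}_{V_n}(u,v) - \sum_{x\in V_n}u(x)(\ow{L}v)(x)m(x) \;=\; \sum_{x\in\text{bd}V_n}u(x)(\partial_{V_n}v)(x),
\]
is not correct. Tracing your own computation, the ``leftover'' after subtracting $\ow{Q}_{V_n}(u,v)$ is
\[
\sum_{x\in\text{bd}V_n}u(x)\sum_{y\notin V_n}b(x,y)(v(x)-v(y)) \;=\; \sum_{x\in\text{bd}V_n}u(x)\bigl[(\ow{L}v)(x)m(x) - (\partial_{V_n}v)(x)\bigr],
\]
not $-\sum_{x\in\text{bd}V_n}u(x)(\partial_{V_n}v)(x)$; the piece $\sum_{x\in\text{bd}V_n}u(x)(\ow{L}v)(x)m(x)$ does not disappear under ``re-bookkeeping''. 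The exact identity (the one the paper uses) is obtained by summing $\ow{L}v$ only over the interior:
\[
\ow{Q}_{V_n}(u,v) \;=\; \sum_{x\in\text{int}V_n}u(x)(\ow{L}v)(x)m(x) \;+\; \sum_{x\in\text{bd}V_n}u(x)(\partial_{V_n}v)(x).
\]

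This also pinpoints where local finiteness is genuinely needed. It is \emph{not} needed to get $b(x,y)=0$ for $x\in\text{int}V_n$, $y\notin V_n$ (that is just the definition of $\text{int}V_n$), nor for the finite rearrangements on $V_n$. It is needed to guarantee $\bigcup_n \text{int}V_n = V$, so that $\sum_{x\in\text{int}V_n}u(x)(\ow{L}v)(x)m(x)\to\as{u,\ow{L}v}$. Your version can be salvaged by noting that the spurious term $\sum_{x\in\text{bd}V_n}u(x)(\ow{L}v)(x)m(x)$ tends to $0$ (it is the difference of the absolutely convergent partial sums over $V_n$ and over $\text{int}V_n$), but this again relies on $\bigcup_n \text{int}V_n = V$ and hence on local finiteness.
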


\begin{proof}
Let $V_n$ be as above. Then, a simple calculation shows
\begin{align*}
\frac{1}{2}\sum_{x,y \in V_n}b(x,y)(u(x)&-u(y))(v(x)-v(y)) \\&= \sum_{x \in \text{int}V_n} u(x)(\ow{L}v)(x)m(x) + \sum_{x \in \text{bd}V_n}u(x)(\partial_{V_n} v)(x).
\end{align*}
Because $(b,0)$ is locally finite, we obtain $\cup_n \text{int}V_n = V$. Furthermore, our assumptions yield that the sum on the left and 
$$\sum_{x \in V} u(x)(\ow{L}v)(x)m(x)$$
are absolutely convergent. Taking the limit $n \to \infty$ implies the desired statement, noting that absolute convergence yields independence of the choice of the $V_n$.
\end{proof}
 
\begin{remark}
The local finiteness is crucial for the above computations, which are taken from the proof of Theorem 4.6 in \cite{JP}. Otherwise one cannot control $\text{\emph{int} }W$ for finite sets $W$. In the non local finite case it might even happen that $\text{\emph{int} }W = \emptyset$ for all finite $W \subseteq V$.
\end{remark}



\section{Further global properties} \label{further global properties}

In this chapter we  discuss two other concepts - namely stochastic completeness and the validity of $\QD = \QN$. It turns out that characterizations of these two properties are similar to the ones obtained for recurrence and transience. We first introduce the notion of stochastic completeness (Definition \ref{SC}) and then prove a characterization which  is analogous to Theorem \ref{integral recurrence} (see Theorem \ref{integral sc}) and is also motivated by results of \cite{GM}. Afterwards, we present a criterion for stochastic completeness in terms of the unique solvability of the equation $(\ow{L} + \alpha)u = 0$ on $\ell^\infty$ (Theorem \ref{unique solution sc}). This criterion is taken from \cite{KL}. We then characterize when the Neumann form $\QN$ and the regular Dirichlet form $\QD$ coincide. We show that this is related to unique solvability of $(\ow{L} + \alpha)u = 0$ on $\ow{D} \cap \ell^2(V,m)$ and the validity of Green's formula for $\ell^2$-functions (Theorem \ref{unique solvability QN=QD}). The connection between $\QD = \QN $ and the validity of $\ow{Q}(u,v) = \as{\ow{L}u,v}$ for certain  $\ell^2$-functions seems to be new.

\subsection{Stochastic completeness}

Using the extension of a Markovian resolvent to $\ell^\infty(V)$ (see Appendix~\ref{appendix:dirichlet forms}) we  introduce the concept of stochastic completeness.  

\begin{definition}\label{SC}
Let $Q$ be a Dirichlet form associated with $(b,c)$. $Q$ is called \emph{stochastically complete} if $$(L+1)^{-1}1 = 1.$$
Otherwise $Q$ is called \emph{stochastically incomplete}. 
\end{definition}

\begin{remark}
By general principles (the correspondence of $(L+\alpha)^{-1}$ and $e^{-tL}$) the  definition of stochastic completeness is equivalent to the validity of 
$$e^{-tL}1 = 1$$
for all $t > 0$. This equation is important whenever one investigates a Markov process $(X_t)_{t \geq 0}$ on $V\cup \{\infty\}$ which satisfies 
$$\mathbb{P}(X_t = y | X_0 = x) = e^{-tL}\delta_y(x).$$
In view of this equation stochastic completeness is equivalent to 
$$\mathbb{P}(X_t \in V | X_0 = x) = 1 \text{ for all } t>0.$$
In other words, stochastic completeness describes the property that $X_t$ does not leave $V$ in finite time.
\end{remark}

The next result is similar to Theorem \ref{integral recurrence}. It seems to be new in this context.

\begin{theorem} \label{integral sc}
Let $(b,c)$ be connected and let $m$ be a measure of full support. The associated regular Dirichlet form $\QD$ on $\ell^2(V,m)$ is stochastically complete if and only if the equality
$$\sum_{x\in V} (\ow{L}u)(x)m(x) = 0$$
holds for all $u \in D(\QD) \cap \ell^1(V,m)$ with $\ow{L}u \in \ell^1(V,m) \cap \ell^2(V,m)$.
\end{theorem}

\begin{proof} Let $(e_n)$ be a sequence in $C_c(V)$  which satisfies $0 \leq e_n \leq e_{n+1} \leq 1$ and $e_n \to 1$ pointwise. Let $u_n = (\LD + 1)^{-1}e_n \in D(\LD)$. The way we extended the resolvent to $\ell^\infty$ yields pointwise convergence of $(u_n)$  towards $(\LD + 1)^{-1}1$. Furthermore, because $(\LD + 1)^{-1}$ is a positivity preserving, Markovian resolvent, we infer $0 \leq u_n \leq 1$.

Now assume $\QD$ is stochastically complete and let $u \in D(\QD) \cap \ell^1(V,m)$, such that $\ow{L}u \in \ell^1(V,m) \cap \ell^2(V,m)$ be given. Proposition \ref{QD} shows that $u$ belongs to $D(\LD)$. Furthermore, stochastic completeness yields pointwise convergence of $(u_n)$ towards $1$. Using the self-adjointness of $\LD$ and Lebesgue's theorem we may compute 
\begin{align*}
\sum_{x\in V} (\ow{L}u)(x)m(x) &= \sum_{x\in V} (\LD u)(x)m(x)\\
&= \lim_{n \to \infty} \sum_{x\in V}u_n(x) (\LD u)(x)m(x)\\
&= \lim_{n \to \infty} \sum_{x\in V}(\LD u_n)(x) u(x)m(x)\\
&= \lim_{n \to \infty} \sum_{x\in V}(e_n(x)- u_n(x)) u(x)m(x)\\
&= \sum_{x\in V}(1-(\LD + 1)^{-1}1(x)) u(x)m(x)\\
&= 0.
\end{align*}
This shows one implication.

On the contrary, if the sum is always vanishing, put $u = (\LD + 1)^{-1}v$, where $v \in \ell^1(V,m)\cap \ell^2(V,m)$ is chosen strictly positive. This implies that $u$ belongs to $ D(\QD) \cap \ell^1(V,m)$ (see appendix, extension of the resolvent to $\ell^1$) and $\ow{L}u \in \ell^1(V,m) \cap \ell^2(V,m)$. Then, our assumptions, Lebesgue's theorem and the self-adjointness of the resolvent yield
\begin{align*}
0 &= \sum_{x \in V} (\ow{L}u)(x)m(x)\\
 &= \lim_{n\to \infty} \sum_{x \in V} e_n(x)(\LD u)(x)m(x)\\
 &= \lim_{n\to \infty} \sum_{x \in V} e_n(x)(v(x) - (\LD +  1)^{-1}v(x))m(x)\\
 &= \lim_{n\to \infty} \sum_{x \in V} (e_n(x) - (\LD +  1)^{-1}e_n(x))v(x)m(x)\\
 &=  \sum_{x \in V} (1 - (\LD +  1)^{-1}1(x))v(x)m(x).
\end{align*}
Since $v$ was chosen strictly positive and $(\LD +  1)^{-1}1(x) \leq 1$ (see appendix) this shows $(\LD +  1)^{-1}1 = 1$.
\end{proof}

\begin{remark}
As for recurrence one can show that on a connected graph  a nonvanishing potential $c$ implies stochastic incompleteness (see e.g. \cite{KL}). Therefore, we will assume $c \equiv 0$ in the following sections.
\end{remark}

The next theorem is a characterization of stochastic completeness in terms of the unique solvability of $(\ow{L} + \alpha)u = 0$ on $\ell^\infty(V)$. We will need it for the discussion in the next chapter.

\begin{theorem} \label{unique solution sc}
Let $\QD$ be the regular Dirichlet form associated with $(b,0)$ on $\ell^2(X,m)$. The following assertions are equivalent:
\begin{itemize}
 	\item[(i)]$\QD$ is stochastically complete.
 	\item[(ii)] For any $\alpha > 0$ the equation $(\ow{L} + \alpha)u = 0$ is uniquely solvable on $\ell^\infty (V)$.
\end{itemize}
\end{theorem}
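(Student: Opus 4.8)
The plan is to prove the two implications separately, relating stochastic completeness (a statement about the bounded resolvent $(\ow{L}+\alpha)^{-1}1 = 1/\alpha$, equivalently $(L^{(D)}+1)^{-1}1=1$) to uniqueness of bounded solutions of $(\ow{L}+\alpha)u=0$. For the direction $(i)\Rightarrow(ii)$, suppose $u\in\ell^\infty(V)$ solves $(\ow{L}+\alpha)u=0$. The key tool is the Dirichlet (Markov) resolvent estimate: the bounded extension of $(L^{(D)}+\alpha)^{-1}$ to $\ell^\infty(V)$ is positivity preserving and a contraction, so in particular $0\le(L^{(D)}+\alpha)^{-1}w\le\|w\|_\infty/\alpha$ for nonnegative $w$. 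I would first reduce to showing that the only bounded solution is $0$: write $w:=(\ow{L}+\alpha)u$ formally, and use that $u$ being bounded and $\ow{L}u=-\alpha u$ bounded forces, via the approximation $u_n=(L^{(D)}+\alpha)^{-1}e_n$ with $e_n\uparrow 1$ in $C_c(V)$ (exactly the sequence used in the proof of Theorem \ref{integral sc}), a comparison argument: decompose $u=u^+-u^-$, observe $\ow{L}u^+\ge$ something on the set where $u=u^+$, and compare $u$ against multiples of $1-(L^{(D)}+\alpha)^{-1}1$, which vanishes under stochastic completeness. So boundedness plus stochastic completeness squeezes $u$ between $\pm\|u\|_\infty(1-(L^{(D)}+\alpha)^{-1}1)=0$.

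For the converse $(ii)\Rightarrow(i)$, I would argue by contraposition: if $\QD$ is stochastically incomplete, produce a nonzero bounded solution of $(\ow{L}+\alpha)u=0$. The natural candidate is $h:=1-\alpha(\ow{L}+\alpha)^{-1}1$ (using the $\ell^\infty$-extension of the resolvent), where $\alpha$ can be taken to be $1$ without loss of generality since Definition \ref{SC} is formulated at $\alpha=1$ and the resolvent identity transfers the statement across $\alpha$. One checks $0\le h\le 1$ from the Markov property, $h\ne 0$ precisely because stochastic incompleteness means $(L^{(D)}+1)^{-1}1\ne 1$, and then verifies $(\ow{L}+1)h=0$ by applying $\ow{L}$ to the identity defining $h$ — here one needs that $(\ow{L}+1)(\ow{L}+1)^{-1}v=v$ for suitable $v$, which follows from Proposition \ref{QD} (the associated operator $L^{(D)}$ is a restriction of $\ow{L}$) together with a density/limiting argument using $e_n$ to pass from $v\in C_c(V)$ to $v=1$. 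For general $\alpha$ one rescales or repeats with $(L^{(D)}+\alpha)^{-1}$.

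The main obstacle I expect is the $(i)\Rightarrow(ii)$ direction: the comparison/squeeze argument needs care because $\ow{L}u$ need not lie in $\ell^2$ or $\ell^1$, so one cannot directly invoke Green's formula (Lemma \ref{green}) or the $\ell^2$-theory of $L^{(D)}$ against $u$ itself. The trick is to test against the finitely supported $e_n$ and the resolvent-smoothed $u_n=(L^{(D)}+\alpha)^{-1}e_n\in D(L^{(D)})$, use self-adjointness of $L^{(D)}$ and Lemma \ref{green} to move $\ow{L}$ onto $u_n$, and only then let $n\to\infty$, invoking Lebesgue's theorem and the pointwise convergence $u_n\to(L^{(D)}+\alpha)^{-1}1$ — which under $(i)$ equals $1/\alpha$. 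This is the same mechanism as in the proof of Theorem \ref{integral sc}, and indeed the cleanest write-up may first establish an integrated identity $\sum_x u(x)(1-\alpha(L^{(D)}+\alpha)^{-1}1(x))m(x)\cdot(\text{weight})$ and conclude. I would also remark that this equivalence is due to \cite{KL} and that the proof here merely adapts their argument to the present notation.
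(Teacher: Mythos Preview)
The paper does not give an independent proof: it simply invokes Theorem~1 of \cite{KL}. So there is no ``paper approach'' to compare against; you are effectively proposing to supply the argument that the thesis outsources.

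Your direction $(ii)\Rightarrow(i)$ is fine. The candidate $h=1-\alpha(\LD+\alpha)^{-1}1$ lies in $[0,1]$ by the Markov property of the extended resolvent, is nonzero exactly under stochastic incompleteness, and $(\ow{L}+\alpha)h=0$ follows by approximating $1$ by $e_n\in C_c(V)$, using that $(\LD+\alpha)^{-1}e_n\in D(\LD)$ with $\LD$ a restriction of $\ow{L}$ (Proposition~\ref{QD}), and passing to the limit via dominated convergence in $\sum_y b(x,y)(\cdot)$ thanks to the uniform bound and $(b2)$.

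For $(i)\Rightarrow(ii)$ your target inequality $|u|\le \|u\|_\infty\bigl(1-\alpha(\LD+\alpha)^{-1}1\bigr)$ is exactly right and yields $u=0$ under $(i)$. However, the route you propose to reach it --- pairing $u$ with $u_n=(\LD+\alpha)^{-1}e_n$ via Green's formula, then ``moving $\ow{L}$ onto $u_n$'' and passing to the limit --- has a gap: a bounded solution $u$ need not lie in $\ow{D}$, so Lemma~\ref{green} does not apply to $u$; and even the symmetric pairing $\as{\ow{L}u,u_n}$ or $\as{u,\ow{L}u_n}$ need not be absolutely convergent, since $u\in\ell^\infty$ while $u_n,\ow{L}u_n$ are only in $\ell^2$. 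The ``integrated identity'' you mention at the end suffers from the same convergence problem. What actually proves the squeeze (and is how \cite{KL} argue) is a minimum principle for the Dirichlet resolvent obtained via the finite-volume approximation of Theorem~\ref{approximation resolvent}: for finite $K\subset V$ one has $(L_K+\alpha)(w|_K)\ge (\ow{L}+\alpha)w|_K$ whenever $w\ge 0$, so $w|_K\ge \alpha(L_K+\alpha)^{-1}1_K$ by the finite-dimensional maximum principle, and letting $K\uparrow V$ gives $w\ge \alpha(\LD+\alpha)^{-1}1$ for $w=\|u\|_\infty\pm u$. Replace your pairing argument by this comparison and the proof goes through.
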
 

\begin{proof}
This is an immediate consequence of Theorem 1 in \cite{KL}.
\end{proof}

\subsection{Regularity of the Neumann form}

From the definition of $\QD$ and $\QN$ it is not clear whether these two forms coincide or not. The theorem below provides a characterization of this in terms of unique solvability of $(\ow{L} + \alpha)u = 0$ on $\ow{D}\cap \ell^2(V,m)$ and the validity of Green's formula for $\ell^2$-functions.

\begin{theorem}
Let $\QD$ be the regular Dirichlet form associated with $(b,c)$ on $\ell^2(X,m)$ and let $\QN$ be Neumann form associated with $(b,c)$ on $\ell^2(X,m)$. The following assertions are equivalent:
\begin{itemize} \label{unique solvability QN=QD}
 	\item[(i)]$\QD = \QN.$ 
 	\item[(ii)] For any $\alpha > 0$ the equation $(\ow{L} + \alpha)u = 0$ is uniquely solvable in $\ow{D}\cap \ell^2(V,m)$.
 	\item[(iii)] For all $u \in \ow{D}\cap \ell^2(V,m)$ and $v \in \ow{D}\cap \ell^2(V,m)$ with $\ow{L}v \in \ell^2(V,m)$ the equation
$$\ow{Q}(u,v) = \as{u,\ow{L}v}$$
holds. 
 \end{itemize}
\end{theorem}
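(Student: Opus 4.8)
For $(i) \Rightarrow (iii)$: assume $\QD = \QN$, and let $u, v \in \ow{D} \cap \ell^2(V,m)$ with $\ow{L}v \in \ell^2(V,m)$. Since $v \in D(\QN) = D(\QD)$ and $\ow{L}v = \LN v \in \ell^2(V,m)$ (using Proposition \ref{QN}), the characterization of $D(\LD)$ in Proposition \ref{QD} shows $v \in D(\LD)$. Then by the correspondence $\QD \leftrightarrow \LD$ and again $u \in D(\QD)$, I would write $\ow{Q}(u,v) = \QD(u,v) = \as{u, \LD v} = \as{u, \ow{L}v}$. This is essentially a bookkeeping step assembling the two previous propositions.

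For $(iii) \Rightarrow (ii)$: suppose $u \in \ow{D} \cap \ell^2(V,m)$ solves $(\ow{L} + \alpha)u = 0$ for some $\alpha > 0$. Then $\ow{L}u = -\alpha u \in \ell^2(V,m)$, so I can apply $(iii)$ with $v = u$ to get $\ow{Q}(u) = \as{u, \ow{L}u} = -\alpha \as{u,u} = -\alpha \aV{u}_2^2$. Since $\ow{Q}(u) \geq 0$ and $\alpha > 0$, this forces $\aV{u}_2 = 0$, hence $u = 0$. So the equation has only the trivial solution.

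For $(ii) \Rightarrow (i)$: this is the substantive direction. I always have $D(\QD) \subseteq D(\QN)$, so I need to show the reverse inclusion under $(ii)$; in fact it suffices to show $\QD$ and $\QN$ agree, for which I would use the standard fact that among all closed extensions of $\ow{Q}|_{C_c(V)}$ inside $\ell^2(V,m)$, $\QD$ is the smallest (its Krein–von Neumann-type minimal closure) and $\QN$ is the largest, and the gap between them is exactly the orthogonal complement, with respect to the $\QN_\alpha$-inner product, of $D(\QD)$ inside $D(\QN)$. The key point: $w \in D(\QN)$ lies in this complement iff $\QN_\alpha(w, \varphi) = 0$ for all $\varphi \in C_c(V)$, i.e. (by Green's formula, Lemma \ref{green}) iff $(\ow{L} + \alpha)w = 0$ pointwise; but such $w$ lies in $\ow{D} \cap \ell^2(V,m)$, so $(ii)$ forces $w = 0$, giving $D(\QD) = D(\QN)$ and hence $\QD = \QN$.

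\textbf{The main obstacle} will be making the last step rigorous: precisely identifying the $\QN_\alpha$-orthogonal complement of $\overline{C_c(V)}$ in $D(\QN)$ with the space of $\ell^2$ solutions of $(\ow{L}+\alpha)u = 0$. One must check that $\QN$ is genuinely closed (so that $D(\QN)$ is a Hilbert space under $\aV{\cdot}_Q$ — this should follow from Fatou/Proposition \ref{structure of D}-type arguments as in Proposition \ref{QN}), that for $w$ in the complement the identity $\QN_\alpha(w,\delta_x) = 0$ translates via Lemma \ref{green} into $(\ow{L}+\alpha)w(x) = 0$ for every $x$, and conversely that any $\ell^2$-solution is automatically $\QN_\alpha$-orthogonal to $C_c(V)$ and hence to $D(\QD)$. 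Once this dictionary is in place the three implications close the loop.
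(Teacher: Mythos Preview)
Your proof is correct. The cycle $(i)\Rightarrow(iii)\Rightarrow(ii)\Rightarrow(i)$ closes cleanly, and the ``main obstacle'' you flag is not really an obstacle: $\QN$ is closed by Proposition~\ref{QN}, $D(\QD)$ is by definition the $\aV{\cdot}_Q$-closure of $C_c(V)$ inside $D(\QN)$, and Lemma~\ref{green} gives exactly the dictionary $\QN_\alpha(w,\delta_x)=m(x)(\ow{L}+\alpha)w(x)$ you need to identify the $\QN_\alpha$-orthogonal complement of $D(\QD)$ with the solution space of $(\ow{L}+\alpha)w=0$ in $\ow{D}\cap\ell^2(V,m)$.

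The paper organizes the argument differently. It proves the two equivalences $(i)\Leftrightarrow(ii)$ and $(i)\Leftrightarrow(iii)$ separately. For $(ii)\Rightarrow(i)$ it does not use your orthogonal-complement decomposition of the form domain; instead it takes an arbitrary $u\in\ell^2(V,m)$, sets $v=(\LN+\alpha)^{-1}u-(\LD+\alpha)^{-1}u$, observes that $(\ow{L}+\alpha)v=0$ since both $\LN$ and $\LD$ are restrictions of $\ow{L}$, and concludes $v=0$ from $(ii)$, so the resolvents (hence the forms) coincide. For $(iii)\Rightarrow(i)$ the paper argues at the operator level: $(iii)$ forces $D(\LN)\supseteq\{v\in\ow{D}\cap\ell^2:\ow{L}v\in\ell^2\}=D(\LD)$, so $\LD\subseteq\LN$, and taking adjoints of self-adjoint operators gives equality. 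Your route has the advantage that the step $(iii)\Rightarrow(ii)$ is a one-line energy computation ($\ow{Q}(u)=-\alpha\aV{u}_2^2\geq 0$ forces $u=0$), which is arguably the most transparent link in the whole statement and is not isolated in the paper. The paper's resolvent argument for $(ii)\Rightarrow(i)$, on the other hand, avoids having to set up the Hilbert-space decomposition of $D(\QN)$ explicitly.
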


\begin{proof}
'(ii) $\Rightarrow$ (i)': By the general theory (see Appendix~\ref{appendix:dirichlet forms}) it suffices to show that the resolvents $(\LD+\alpha)^{-1}$ and $(\LN+\alpha)^{-1}$ coincide. Let $u\in \ell^2(V,m)$ be arbitrary. Set $$v = (\LN+\alpha)^{-1}u - (\LD+\alpha)^{-1}u.$$ Since both operators $\LD$ and $\LN$ are restrictions of $\ow{L}$ to their corresponding domains, we infer 
$$(\ow{L}+\alpha)v = 0.$$
Hence, assertion (ii) implies $v = 0$ and we conclude  $(\LD+\alpha)^{-1} =(\LN+\alpha)^{-1}$.

'(i) $\Rightarrow$ (ii)': By Proposition \ref{QD} the domain of $\LD$ is given by
$$D(\LD) = \{u \in D(\QD) \mid \ow{L}u \in \ell^2(V,m)\}.$$
Since we assumed $\QD = \QN$ this implies
$$D(\LD) =  \{u \in \ow{D}\cap \ell^2(V,m)\mid  \ow{L}u \in \ell^2(V,m)\}.$$
Let $\alpha > 0$ and $u \in \ow{D}\cap \ell^2(V,m)$  with $\ow{L}u = -\alpha u$ be given. By the above characterization of $D(\LD)$ this implies $u \in D(\LD)$.  Since the spectrum of $\LD$ is contained in $[0,\infty)$, we infer $u = 0.$

'(i) $\Rightarrow$ (iii)': Assume $\QD = \QN$. Proposition \ref{QD} implies
\begin{align*}
 D(\LN) = D(\LD) &= \{v \in D(\QD)\mid \ow{L}v \in \ell^2(V,m)\} \\&= \{v \in \ow{D}\cap \ell^2(V,m)\,| \, \ow{L}v \in \ell^2(V,m)\}.
\end{align*}

This shows (iii).

'(iii) $\Rightarrow$ (i)': Assume $\ow{Q}(u,v) = \as{u,\ow{L}v}$ for all $u \in \ow{D}\cap \ell^2(V,m)$ and $v \in \ow{D}\cap \ell^2(V,m)$ with $\ow{L}v \in \ell^2(V,m)$. By the correspondence of $\LN$  and $\QN$ the domain of $\LN$ satisfies
$$D(\LN) \supseteq \{v \in \ow{D}\cap \ell^2(V,m)\,| \, \ow{L}v \in \ell^2(V,m)\}.$$
Hence, Proposition \ref{QD} shows $\LD \subseteq \LN$. Taking adjoints yields the statement.
\end{proof}

\begin{remark}
The equivalence (i) and (ii) was already shown in \cite[Corollary 3.3]{HKLW}. However their proof is different from the one given above. The equivalence of (i) and (iii) seems to be new. 
\end{remark}

Part (iii) of the theorem above can be considered as a boundary term characterization of $\QD = \QN$ which is an analogous to Theorem~ \ref{boundary recurrence}. To see this we introduce the boundary term 
$$\hat{R}: \ow{D}\cap \ell^2(V,m) \times \{u \in\ow{D}\cap \ell^2(V,m) \, | \, \ow{L}u \in \ell^2(V,m) \} \to \RR$$
acting by
$$\hat{R} (u,v) = \ow{Q}(u,v) - \as{u,\ow{L}u}.$$
\begin{coro}
The equality $\QD = \QN$ holds if and only if $\hat{R} \equiv 0.$
\end{coro}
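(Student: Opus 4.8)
The plan is to read off this corollary directly from Theorem \ref{unique solvability QN=QD}, specifically from the equivalence $(i) \Leftrightarrow (iii)$. The corollary merely repackages condition $(iii)$ in the language of the boundary term $\hat{R}$, so there is essentially no new content to prove; the work is purely a matter of unwinding definitions.

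First I would observe that for $u \in \ow{D}\cap \ell^2(V,m)$ and $v \in \ow{D}\cap \ell^2(V,m)$ with $\ow{L}v \in \ell^2(V,m)$, both the quantities $\ow{Q}(u,v)$ and $\as{u,\ow{L}v}$ are well-defined finite real numbers: the first because $u,v \in \ow{D}$, and the second because $u \in \ell^2(V,m)$ and $\ow{L}v \in \ell^2(V,m)$ so the pairing converges absolutely by Cauchy--Schwarz. Hence $\hat{R}(u,v) = \ow{Q}(u,v) - \as{u,\ow{L}v}$ is genuinely a well-defined function on the stated domain. (I would note in passing that the displayed formula in the definition of $\hat{R}$ in the excerpt reads $\as{u,\ow{L}u}$, which is evidently a typo for $\as{u,\ow{L}v}$; I would state $\hat R(u,v) = \ow{Q}(u,v) - \as{u,\ow{L}v}$ and proceed.)

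Then the proof is a single line in each direction. If $\QD = \QN$, then by the implication $(i) \Rightarrow (iii)$ of Theorem \ref{unique solvability QN=QD} we have $\ow{Q}(u,v) = \as{u,\ow{L}v}$ for all admissible $u,v$, i.e. $\hat{R}(u,v) = 0$ for all such pairs, which is precisely $\hat{R}\equiv 0$. Conversely, if $\hat{R}\equiv 0$, then $\ow{Q}(u,v) = \as{u,\ow{L}v}$ holds for all $u \in \ow{D}\cap\ell^2(V,m)$ and all $v \in \ow{D}\cap\ell^2(V,m)$ with $\ow{L}v\in\ell^2(V,m)$, which is exactly condition $(iii)$, and the implication $(iii)\Rightarrow(i)$ of Theorem \ref{unique solvability QN=QD} gives $\QD = \QN$.

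There is no real obstacle here; the only thing to be slightly careful about is making sure the domain of $\hat{R}$ matches verbatim the class of functions quantified over in condition $(iii)$ — namely $u$ ranging over $\ow{D}\cap\ell^2(V,m)$ and $v$ ranging over $\{v \in \ow{D}\cap\ell^2(V,m) \mid \ow{L}v\in\ell^2(V,m)\}$ — so that "$\hat R \equiv 0$" and "condition $(iii)$ holds" are literally the same statement. Given that, the corollary follows immediately and the proof can simply be: "This is a reformulation of the equivalence $(i)\Leftrightarrow(iii)$ in Theorem \ref{unique solvability QN=QD}."
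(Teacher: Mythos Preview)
Your proposal is correct and matches the paper's approach exactly: the corollary is stated without proof in the paper, as it is an immediate reformulation of the equivalence $(i)\Leftrightarrow(iii)$ in Theorem \ref{unique solvability QN=QD}. You have also correctly spotted the typo $\as{u,\ow{L}u}$ in the definition of $\hat{R}$.
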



\section{Consequences of recurrence} \label{consequences of recurrence}

In this chapter we discuss the relationship between all the global properties which were introduced above. We prove that recurrence of $\QD$ always implies stochastic completeness and $\QD = \QN$ (see Theorem \ref{implications of recurrence}) and that all concepts coincide in the case when $m$ is finite (Theorem \ref{finite measure}). Using these results we show that recurrence of $\QD$ is related to the unique solvability of the eigenvalue equation $(\ow{L} + \alpha)u =0$ on the space $\ow{D}$. (Theorem \ref{last}). Except Theorem \ref{implications of recurrence}, which is valid in much more general situations,  all the results of this chapter seem to be new.

\begin{lemma}
Let $(b,0)$ be connected and assume that $\QD$ is recurrent. Let $\alpha > 0$  and let $u \in \ow{D}$ satisfy $u \leq 0$ and  $(\ow{L}+\alpha)u \geq 0$. Then $u \equiv 0$.
\end{lemma}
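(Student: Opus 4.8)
The plan is to show that a nonpositive function $u \in \ow{D}$ with $(\ow{L}+\alpha)u \geq 0$ must vanish, by combining the recurrence of $\QD$ with a cut-off argument against the approximating sequence for the constant function $1$. First I would record the pointwise inequality we are given: for every $x \in V$,
\[
(\ow{L}u)(x) \geq -\alpha u(x) = \alpha |u(x)| \geq 0,
\]
since $u \leq 0$. In particular $u$ is superharmonic of finite energy, so Theorem \ref{superharmonic} already forces $u$ to be \emph{constant}. Write $u \equiv -a$ with $a \geq 0$. Then $(\ow{L}u)(x) = 0$ for all $x$ (the Laplacian kills constants and $c \equiv 0$), and the hypothesis $(\ow{L}+\alpha)u \geq 0$ reads $-\alpha a \geq 0$, i.e. $a \leq 0$, which together with $a \geq 0$ gives $a = 0$ and hence $u \equiv 0$.

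Alternatively, if one prefers not to route through the superharmonic characterization, I would argue directly. By Theorem \ref{potential} recurrence of $\QD$ provides a sequence $(e_n) \subseteq C_c(V)$ with $0 \leq e_n \leq 1$ and $\aV{e_n - 1}_o \to 0$. Pair the inequality $(\ow{L}+\alpha)u \geq 0$ against the nonnegative test function $e_n^2$ (or just $e_n$) using Green's formula (Lemma \ref{green}):
\[
0 \leq \sum_{x \in V} e_n(x)\big((\ow{L}u)(x) + \alpha u(x)\big)m(x) = \ow{Q}(u, e_n) + \alpha \as{u, e_n}.
\]
Since $\aV{e_n - 1}_o \to 0$ implies $\ow{Q}(e_n) \to 0$ (because $c \equiv 0$), Cauchy--Schwarz gives $\ow{Q}(u,e_n) \to 0$; and $\as{u,e_n} = \sum_x u(x) e_n(x) m(x) \to \sum_x u(x) m(x)$ by monotone convergence, noting $u e_n$ is monotone in $n$ as $u \leq 0$ and $e_n \leq e_{n+1}$ may be arranged. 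Hence $0 \leq \alpha \sum_{x\in V} u(x) m(x) \leq 0$, forcing $\sum_x u(x) m(x) = 0$; since $u \leq 0$ and $m > 0$ everywhere, this means $u \equiv 0$.

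The only genuinely delicate point is ensuring the limit $\as{u,e_n} \to \sum_x u(x) m(x)$ is justified even when $u \notin \ell^1(V,m)$: if $\sum_x u(x) m(x) = -\infty$ then $\as{u,e_n} \to -\infty$, and the inequality $\ow{Q}(u,e_n) + \alpha\as{u,e_n} \geq 0$ is contradicted for large $n$ once we know $\ow{Q}(u,e_n)$ stays bounded; so in all cases we conclude $u \in \ell^1(V,m)$ with $\sum_x u(x) m(x) = 0$ and therefore $u \equiv 0$. I expect the cleanest write-up is simply to invoke Theorem \ref{superharmonic}: the function $u$ is superharmonic of finite energy, hence constant by recurrence, and then the hypothesis $(\ow{L}+\alpha)u \geq 0$ pins the constant to $0$.
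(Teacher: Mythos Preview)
Your primary argument is exactly the paper's proof: from $u\le 0$ and $(\ow{L}+\alpha)u\ge 0$ one reads off $\ow{L}u\ge -\alpha u\ge 0$, so $u$ is superharmonic of finite energy, hence constant by Theorem~\ref{superharmonic}, and then $(\ow{L}+\alpha)u\ge 0$ forces the constant to be $0$.

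Your alternative direct argument via the approximating sequence $(e_n)$ from Theorem~\ref{potential} is also valid and is not in the paper. One small point: you cannot simply ``arrange'' $e_n\le e_{n+1}$ from Theorem~\ref{potential} as stated, but monotone convergence is not needed. Since $-ue_n\ge 0$ and $-ue_n\to -u$ pointwise, Fatou gives $\liminf_n(-\as{u,e_n})\ge \sum_x(-u(x))m(x)$; combined with $|\ow{Q}(u,e_n)|\le \ow{Q}(u)^{1/2}\ow{Q}(e_n)^{1/2}\to 0$ and the inequality $\ow{Q}(u,e_n)+\alpha\as{u,e_n}\ge 0$, this rules out $\sum_x(-u(x))m(x)=\infty$ and forces the sum to be $0$, hence $u\equiv 0$. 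So your fallback in the last paragraph is precisely the right fix, and the alternative proof stands on its own without ever invoking Theorem~\ref{superharmonic}.
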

\begin{proof}
Let $u$ be as above. By $\ow{L}u \geq -\alpha u$ we infer that $u$ is superharmonic. Theorem \ref{superharmonic} implies that $u$ is constant. We obtain
$$0 \leq (\ow{L}+\alpha)u = \alpha u \leq 0. $$
This shows $u  \equiv 0$ and finishes the proof.
\end{proof}

From this lemma we can deduce the following uniqueness statement for solutions to the equation $(\ow{L}+\alpha)u = 0$ in $\ow{D}$.

\begin{lemma}
	Let $(b,0)$ be connected and assume that $\QD$ is recurrent. Ler $\alpha > 0$ and let $u \in \ow{D}$ with $(\ow{L} + \alpha)u = 0$. Then $u \equiv 0$.
\end{lemma}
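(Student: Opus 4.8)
The statement asks us to upgrade the previous lemma (which handles $u \le 0$ with $(\ow{L}+\alpha)u \ge 0$) to a full uniqueness statement for solutions of $(\ow{L}+\alpha)u = 0$ in $\ow{D}$. The natural strategy is to reduce to the previous lemma by a sign-splitting argument using a normal contraction, exploiting the Markov property of $\ow{Q}$ (Lemma \ref{Markov property}). First I would observe that if $(\ow{L}+\alpha)u = 0$ then, applying the previous lemma to the function $v := -(0 \vee u) = (0 \wedge u)$ (the negative part, which is $\le 0$), I need to check that $(\ow{L}+\alpha)v \ge 0$. Alternatively, and perhaps more cleanly, I would work with $u^+ = (0 \vee u)$ and show $(\ow{L} + \alpha)u^+ \le 0$, then apply the previous lemma to $-u^+$.

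\textbf{Key steps.} The crucial pointwise computation is the following: for any $u \in \ow{F}$ and any $x \in V$, splitting according to whether $u^+(x)$ vanishes, one checks that at a vertex $x$ with $u(x) \le 0$ we have $u^+(x) = 0$ and $(\ow{L}u^+)(x) = -\frac{1}{m(x)}\sum_y b(x,y)u^+(y) \le 0$, hence $(\ow{L}+\alpha)u^+(x) = (\ow{L}u^+)(x) \le 0$; while at a vertex $x$ with $u(x) > 0$ we have $u^+(x) = u(x)$ and $u^+(y) \ge u(y)$ for all $y$, so $(\ow{L}u^+)(x) \le (\ow{L}u)(x)$, giving $(\ow{L}+\alpha)u^+(x) \le (\ow{L}+\alpha)u(x) = 0$. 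In both cases $(\ow{L}+\alpha)u^+ \le 0$, i.e.\ $(\ow{L}+\alpha)(-u^+) \ge 0$. Since $-u^+ \le 0$ and, by Lemma \ref{Markov property}, $\ow{Q}(-u^+) = \ow{Q}(u^+) \le \ow{Q}(u) < \infty$ so that $-u^+ \in \ow{D}$, the previous lemma applies and yields $-u^+ \equiv 0$, i.e.\ $u \le 0$. Applying the same argument to $-u$ (which also solves $(\ow{L}+\alpha)(-u) = 0$ and lies in $\ow{D}$) gives $-u \le 0$, hence $u \equiv 0$.

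\textbf{Main obstacle.} The only real work is the pointwise inequality $(\ow{L}+\alpha)u^+ \le 0$, which requires care in the case-split and in handling the infinite sums — but these are controlled since $u \in \ow{D} \subseteq \ow{F}$ by Step 1 of Lemma \ref{green}, so all the relevant series converge absolutely. One should also double-check that the previous lemma was stated for arbitrary $u \in \ow{D}$ with $u \le 0$ and $(\ow{L}+\alpha)u \ge 0$, which is exactly what we feed it. Everything else is routine.
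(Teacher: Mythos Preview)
Your proposal is correct and follows essentially the same route as the paper: split into positive and negative parts, verify the pointwise inequality $(\ow{L}+\alpha)u^+ \le 0$ by a case distinction on the sign of $u(x)$, and then feed $-u^+$ (and symmetrically $-u^-$, or equivalently repeat for $-u$) into the previous lemma. The only cosmetic difference is that for the case $u(x)>0$ the paper uses the identity $(\ow{L}+\alpha)u_+ = (\ow{L}+\alpha)u_-$ and reads off the sign from the $u_-$-expression, whereas you argue directly via $u^+(y)\ge u(y)$; both computations yield the same pointwise bound.
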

\begin{proof}
Let $u \in \ow{D}$  with  $(\ow{L}+\alpha)u = 0$ be given. Let $u_+ = u \wedge 0$ and $u_- = (-u)\wedge0$ denote the positive/negative part of $u$. Since $u_+, u_- \in \ow{D}$  it suffices to show $(\ow{L}+\alpha)u_+ \leq 0$ and $(\ow{L}+\alpha)u_- \leq 0$ to obtain the statement by the previous lemma. The assumption $(\ow{L}+\alpha)u = 0$ implies 
$$(\ow{L}+\alpha)u_+ = (\ow{L}+\alpha)u_-, $$
which is equivalent to
$$(\ow{L}+\alpha)u_+(x) = \frac{\text{deg}(x)}{m(x)}u_-(x) - \frac{1}{m(x)}\sum_{y\in V} b(x,y)u_-(y) + \alpha u_-(x).$$
For $x\in V$ with $u(x) \geq 0$ we obtain 
$$(\ow{L}+\alpha)u_+(x) =  - \frac{1}{m(x)}\sum_{y\in V} b(x,y)u_-(y) \leq 0.$$
For $x\in V$ with  $u(x) < 0 $ definition of $\ow{L}$ yields
	$$(\ow{L}+\alpha)u_+(x) = - \frac{1}{m(x)}\sum_{y\in V} b(x,y)u_+(y) \leq 0.$$
This finishes the proof.
\end{proof}

We are now able to prove that recurrence implies stochastic completeness and $\QD = \QN$.

\begin{theorem} \label{implications of recurrence}
Let $(b,0)$ be connected and $\QD$ be recurrent. Then, $\QD$ is stochastically complete and $\QD = \QN$.
\end{theorem}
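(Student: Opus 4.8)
The plan is to deduce both claims from the characterizations already established, using the two uniqueness lemmas just proved. First I would handle stochastic completeness. By Theorem~\ref{unique solution sc}, $\QD$ is stochastically complete if and only if for every $\alpha > 0$ the equation $(\ow{L}+\alpha)u = 0$ has only the trivial solution in $\ell^\infty(V)$. So suppose $u \in \ell^\infty(V)$ satisfies $(\ow{L}+\alpha)u = 0$. The bounded functions on a graph need not have finite energy in general, so one cannot apply the preceding lemma directly; the natural move is to pass to the positive and negative parts and show each of them lies in $\ow{D}$. Here recurrence enters: by Theorem~\ref{potential}(iii) there is a sequence $e_n \in C_c(V)$ with $0\le e_n\le 1$ and $\aV{e_n-1}_o\to 0$, and one uses the Green-type pairing $\ow{Q}(e_n u_\pm, e_n) $ together with a Leibniz-type estimate and boundedness of $u$ to bound $\ow{Q}(u_\pm)$; bounded superharmonic (or subharmonic) functions on a recurrent graph are forced into $\mathbf{D}$ by exactly this kind of argument. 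Once $u_\pm\in\ow{D}$, the mixed-part computation from the second uniqueness lemma shows $(\ow{L}+\alpha)u_\pm$ has a sign, so that lemma (applied to $\pm u_\pm$) gives $u\equiv 0$.

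For $\QD = \QN$ I would invoke Theorem~\ref{unique solvability QN=QD}, whose condition (ii) asks that $(\ow{L}+\alpha)u = 0$ be uniquely solvable in $\ow{D}\cap\ell^2(V,m)$ for every $\alpha>0$. But this is immediate: the second uniqueness lemma above already gives uniqueness in all of $\ow{D}$, hence a fortiori in the subspace $\ow{D}\cap\ell^2(V,m)$. So condition (ii) of Theorem~\ref{unique solvability QN=QD} holds, and therefore $\QD = \QN$.

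The main obstacle is the stochastic-completeness half, specifically the step showing that an $\ell^\infty$-solution of $(\ow{L}+\alpha)u=0$ actually has finite energy, so that the finite-energy uniqueness lemma becomes applicable. The finite-energy theory developed so far (Theorems~\ref{monopol}, \ref{superharmonic}, the two new lemmas) lives on $\ow{D}$, while stochastic completeness is naturally an $\ell^\infty$ statement, and bridging that gap is where the recurrence hypothesis must be used in an essential way via the cutoff sequence $e_n$ from Theorem~\ref{potential}. An alternative, cleaner route that avoids this entirely is to observe that one can instead prove the $\ell^\infty$ uniqueness directly from recurrence by a probabilistic or maximum-principle argument, but given the tools already assembled, the cutoff-and-Fatou approach using Theorem~\ref{potential}(iii) and the sign computation of the last lemma is the expected path.
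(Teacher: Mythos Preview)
Your argument for $\QD = \QN$ is correct and matches the paper exactly: Lemma 7.2 gives uniqueness of $(\ow{L}+\alpha)u=0$ in all of $\ow{D}$, hence in $\ow{D}\cap\ell^2(V,m)$, and Theorem \ref{unique solvability QN=QD} finishes.

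For stochastic completeness, however, you have missed the intended route. The paper does \emph{not} go through Theorem \ref{unique solution sc} and $\ell^\infty$-uniqueness. Instead it invokes the two integral characterizations already proved: Theorem \ref{integral recurrence} says recurrence is equivalent to $\sum_x (\ow{L}u)(x)m(x)=0$ for every $u\in\ow{D}$ with $\ow{L}u\in\ell^1(V,m)$, while Theorem \ref{integral sc} says stochastic completeness is equivalent to the same vanishing for every $u\in D(\QD)\cap\ell^1(V,m)$ with $\ow{L}u\in\ell^1(V,m)\cap\ell^2(V,m)$. The second class of test functions is contained in the first (since $D(\QD)\subseteq\ow{D}$ and $\ell^1\cap\ell^2\subseteq\ell^1$), so the implication is a one-line inclusion argument.

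Your proposed path has a genuine gap at exactly the point you flag as ``the main obstacle'': showing that a bounded solution of $(\ow{L}+\alpha)u=0$ lies in $\ow{D}$. The phrase ``Leibniz-type estimate and boundedness of $u$ to bound $\ow{Q}(u_\pm)$'' is not a proof; what one would actually need is a Caccioppoli inequality of the form $\ow{Q}(e_n u)\le C\aV{u}_\infty^2\,\ow{Q}(e_n)$ (or similar) for solutions, together with a Fatou step, and neither the inequality nor its applicability to the positive/negative parts is established in the thesis. It may be possible to push this through, but it is substantially harder than the paper's two-line argument, and the sketch as written does not close the gap.
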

\begin{proof}
Stochastic completeness: This is an immediate consequence of Theorem \ref{integral recurrence} and Theorem \ref{integral sc}.

$\QD = \QN$: This is an immediate consequence of Lemma 7.2 and Theorem \ref{unique solvability QN=QD}.
\end{proof}

\begin{theorem} \label{finite measure}
Let $(b,0)$ be connected and $m(V) < \infty$. The following assertion are equivalent:
\begin{itemize}
\setlength{\itemsep}{0pt}
\item[(i)]$\QD$ is recurrent.
\item[(ii)]  $\QD$ is stochastically complete.
\item[(iii)] $\QD = \QN$.
\end{itemize}

\end{theorem}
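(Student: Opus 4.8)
The plan is to complete the cycle of implications. Theorem \ref{implications of recurrence}, which needs no hypothesis on $m$, already gives $(i)\Rightarrow(ii)$ and $(i)\Rightarrow(iii)$, so it remains to prove $(ii)\Rightarrow(i)$ and $(iii)\Rightarrow(i)$; together with the above these yield $(i)\Leftrightarrow(ii)$ and $(i)\Leftrightarrow(iii)$, hence the full equivalence. The two new implications will both rely on the same observation: since $m(V)<\infty$ we have $\ell^\infty(V)\subseteq\ell^2(V,m)$, in particular $1\in\ell^2(V,m)$; and since $c\equiv 0$ we have $\ow{Q}(1)=0$, so $1\in\ow{D}$ and therefore $1\in\ow{D}\cap\ell^2(V,m)=D(\QN)$. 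The strategy in each case is to deduce that $1\in D(\QD)$, so that automatically $\QD(1)=\ow{Q}(1)=0$, and then to invoke Theorem \ref{char recurrence}: its condition $(iii)$ requires $1\in D(\QD)_e$ and $\QD(1)=0$, and $D(\QD)\subseteq D(\QD)_e$ since a constant sequence is an approximating sequence.

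For $(iii)\Rightarrow(i)$: if $\QD=\QN$ then $1\in D(\QN)=D(\QD)$ by the observation above, and $\QD(1)=\ow{Q}(1)=0$, so Theorem \ref{char recurrence} gives recurrence. For $(ii)\Rightarrow(i)$: stochastic completeness means $(\LD+1)^{-1}1=1$, where a priori the resolvent is understood via its extension to $\ell^\infty(V)$. Because $m(V)<\infty$ forces $1\in\ell^\infty(V)\subseteq\ell^2(V,m)$, this extension agrees on $1$ with the ordinary $\ell^2$-resolvent, so $1\in D(\LD)$ and $(\LD+1)1=1$, i.e. $\LD 1=0$. Hence $1\in D(\LD)\subseteq D(\QD)$ with $\QD(1)=\ow{Q}(1)=0$, and Theorem \ref{char recurrence} again yields recurrence.

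I expect no serious obstacle: the substance is carried by results already in hand (Theorems \ref{implications of recurrence} and \ref{char recurrence}), and what remains is essentially bookkeeping. The one point I would make explicit is the compatibility, when $m(V)<\infty$, of the $\ell^\infty$-extension of the resolvent with the $\ell^2$-resolvent on the constant function; this is precisely what bridges the $\ell^\infty$-flavoured definition of stochastic completeness and the $\ell^2$-statement $\LD 1=0$, and it is the only place where finiteness of the measure is used beyond ensuring $1\in\ell^2(V,m)$.
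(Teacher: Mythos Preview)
Your proof is correct. The implications $(i)\Rightarrow(ii)$, $(i)\Rightarrow(iii)$ via Theorem \ref{implications of recurrence}, and $(iii)\Rightarrow(i)$ via $1\in D(\QN)=D(\QD)$ and Theorem \ref{char recurrence}, are exactly what the paper does.

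Where you diverge is in closing the cycle through $(ii)$. The paper does not prove $(ii)\Rightarrow(i)$ directly; instead it argues $(ii)\Rightarrow(iii)$ by contrapositive: if $\QD\neq\QN$ then the two resolvents differ on some bounded $u$, producing two distinct bounded solutions of $(\ow{L}+1)v=u$, so $(\ow{L}+1)$ fails to be injective on $\ell^\infty(V)$, and Theorem \ref{unique solution sc} yields stochastic incompleteness. Your route is more elementary: you observe that $m(V)<\infty$ puts $1$ into $\ell^2(V,m)$, so the $\ell^\infty$-extension of the resolvent agrees with the genuine $\ell^2$-resolvent on $1$, whence $(\LD+1)^{-1}1=1$ gives $1\in D(\LD)\subseteq D(\QD)$ and recurrence follows from Theorem \ref{char recurrence}. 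This avoids invoking Theorem \ref{unique solution sc} altogether. The paper's detour has the mild advantage of exhibiting an explicit link between $\QD\neq\QN$ and non-uniqueness on $\ell^\infty$, but your argument is shorter and self-contained. The compatibility of the two resolvent notions on $1$ that you flag is indeed immediate from the construction in the appendix (one may take the constant approximating sequence $u_n=1$), so no gap arises there.
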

\begin{proof}
'(i) $\Rightarrow$ (ii)': This is one implication of Theorem \ref{implications of recurrence}.

'(iii) $\Rightarrow$ (i)': The finiteness of $m$ implies $1 \in D(\QN) = D(\QD)$. Since  $c \equiv 0,$ we conclude $\QD(1) = 0$ and recurrence of $\QD$ follows from Theorem \ref{char recurrence}.

'(ii) $\Rightarrow$ (iii)': Let us assume $\QD \neq \QN$. Then, the resolvents $(\LN+1)^{-1}$ and $(\LD+1)^{-1}$ must be different. Because $\ell^\infty(V)\cap \ell^2(V,m)$ is dense in $\ell^2(V,m)$, there exists a bounded function $u $  with
$$(\LD+1)^{-1}u \neq  (\LN+1)^{-1}u.$$
Both functions $(\LD+1)^{-1}u $ and $(\LN+1)^{-1}u$ are bounded solutions to the equation 
$$(\ow{L}+1)v = u.$$
Therefore, $(\ow{L}+1)$ is not injective on $\ell^\infty (V)$. This implies stochastic incompleteness by Theorem \ref{unique solution sc}.
\end{proof}

We have already seen that stochastic completeness and the equality $\QD = \QN$ are related to the uniqueness of solutions of the equation $(\ow{L} + \alpha)u = 0$ on certain function spaces. With the help of the last theorem we prove a similar statement for recurrence. However, as we pointed out before, recurrence does not depend on the choice of $m$. Thus, the uniqueness statement which is equivalent to recurrence must be stronger than the ones for the other concepts.  Let us write $\QD_m$ whenever we refer to $\QD$ on $\ell^2(V,m)$ and let $\ow{L}_m$ be the corresponding formal operator. Furthermore, by $\ow{\Delta}$ we denote the  operator on $\ow{D}$ that acts by 
$$
(\ow{\Delta}u)(x)  := (\ow{L}_{b,0,1}u)(x) = \sum_{y \in V}b(x,y)(u(x)-u(y)).
$$
\begin{theorem} \label{last}
Let $(b,0)$ be connected. The following assertion are equivalent:
\begin{itemize}
\item[(i)] For some measure of full support $m$ on $V$ the form $\QD_m$ is recurrent. 
\item[(ii)] For all measures of full support $m$ on $V$ the form $\QD_m$ is recurrent.
\item[(iii)] For all measures of full support $m$ and for any $\alpha >0$ the equation $(\ow{L}_m + \alpha)u = 0$ has a unique solution in $\ow{D}$.
\item[(iv)] For some finite measure of full support $m$ and for any $\alpha >0$ the equation $(\ow{L}_m + \alpha)u = 0$ has a unique solution in $\ow{D}$.
\item[(v)] For all $v:V \to (0,\infty)$ the equation $(\ow{\Delta} + v)u = 0$ has a unique solution in $\ow{D}$. 
\item[(vi)] For some $v:V \to (0,\infty)$ which belongs to $\ell^1(V,1)$ the equation $(\ow{\Delta} +  v)u = 0$ has a unique solution in $\ow{D}$.

\end{itemize}
\end{theorem}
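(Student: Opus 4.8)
The strategy is to establish the cycle of implications $(iii)\Rightarrow(ii)\Rightarrow(i)\Rightarrow(iii)$ among the "all $m$" and "some $m$" statements, then link these to the finite-measure statement $(iv)$ and finally to the potential reformulations $(v)$ and $(vi)$. The backbone is the observation, already stressed in the earlier remarks, that recurrence of $\QD$ depends only on the combinatorial data $(b,0)$ and not on $m$: indeed, by Theorem \ref{char recurrence} recurrence is equivalent to $1 \in D(\QD)_e$ with $\QD(1)=0$, and by Theorem \ref{potential} it is equivalent to $C_c(V)$ being dense in $\mathbf{D}$ --- a condition phrased purely in terms of $\aV{\cdot}_o$, hence independent of $m$. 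This immediately gives $(i) \Leftrightarrow (ii)$.

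First I would prove $(ii) \Rightarrow (iii)$: fix an arbitrary measure of full support $m$ and $\alpha > 0$, and let $u \in \ow{D}$ solve $(\ow{L}_m + \alpha)u = 0$. Applying Lemma 7.2 --- which states exactly that recurrence of $\QD$ forces the only $\ow{D}$-solution of $(\ow{L} + \alpha)u = 0$ to be zero --- we conclude $u \equiv 0$. Conversely, for $(iv) \Rightarrow (i)$ I would argue by contraposition in the spirit of Theorem \ref{finite measure}: suppose $\QD_m$ is not recurrent for a given finite $m$. By Theorem \ref{finite measure} (using $m(V) < \infty$), non-recurrence is equivalent to $\QD \neq \QN$, hence the resolvents $(\LD+\alpha)^{-1}$ and $(\LN+\alpha)^{-1}$ differ, producing two distinct functions in $\ow{D}\cap\ell^2(V,m)\subseteq\ow{D}$ solving $(\ow{L}_m+\alpha)v = u$ for some $u$; taking their difference yields a nonzero $\ow{D}$-solution of $(\ow{L}_m+\alpha)v = 0$, contradicting $(iv)$. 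Since $(iii) \Rightarrow (iv)$ is trivial once one notes that $V$ always admits a finite measure of full support (e.g. $m(x) = 2^{-n(x)}$ for an enumeration $x \mapsto n(x)$), the equivalence of $(i)$--$(iv)$ is complete.

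For the last two conditions I would exploit the identity $\ow{L}_m = \tfrac{1}{m}\ow{\Delta}$, so that for $v:V\to(0,\infty)$ the equation $(\ow{\Delta}+v)u = 0$ is the same as $(\ow{L}_v + 1)u = 0$ where $\ow{L}_v$ is the formal operator for the measure $m = v$. Thus $(v)$ is precisely the statement "$(\ow{L}_m+1)u=0$ has only the trivial $\ow{D}$-solution for all full-support $m$", which is a special case of $(iii)$ (taking $\alpha = 1$); conversely, given $(iii)$ for $\alpha=1$ and all $m$, rescaling $m \mapsto \alpha^{-1}m$ recovers all $\alpha$, so $(v) \Leftrightarrow (iii)$. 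Similarly $(vi)$ is the specialization of $(iv)$ to the finite measure $m = v \in \ell^1(V,1)$ with $\alpha = 1$, giving $(vi) \Leftrightarrow (iv)$. The main obstacle is making sure the bookkeeping between "$\alpha$ arbitrary, $m$ arbitrary" and "$\alpha = 1$, $m$ arbitrary" is watertight --- the key point being that $(\ow{L}_m + \alpha)u = (\ow{L}_{\alpha^{-1}m} + 1)u$, so that varying $\alpha$ is absorbed into varying $m$; everything else is an assembly of results already proved. The one genuinely substantive input is Theorem \ref{finite measure}, whose proof in turn rests on Theorem \ref{unique solution sc} (stochastic completeness $\Leftrightarrow$ uniqueness on $\ell^\infty$); the novelty here is simply that on $\ow{D}$, which is smaller than $\ell^\infty$ in the transient case but forces triviality in the recurrent case, one gets a uniqueness statement robust under change of measure.
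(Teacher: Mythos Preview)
Your proof is correct and follows essentially the same route as the paper; the only notable deviation is that for $(i)\Leftrightarrow(ii)$ you invoke the measure-independence of the criterion in Theorem \ref{potential} (density of $C_c(V)$ in $\mathbf{D}$), whereas the paper argues via monopoles and Theorem \ref{monopol}. Both are equally valid, and yours is arguably more direct.

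Two small points to tidy up. First, the rescaling identity is slightly off: one has $(\ow{L}_m+\alpha)u=0 \Leftrightarrow (\ow{L}_{\alpha m}+1)u=0$, not $\alpha^{-1}m$, and these are equivalences of null spaces rather than an equality of operators --- but the conclusion (that varying $\alpha$ is absorbed into varying $m$) is unaffected. Second, your claim ``$(vi)\Leftrightarrow(iv)$'' only directly gives $(iv)\Rightarrow(vi)$; for the converse you should observe that your contrapositive argument for $(iv)\Rightarrow(i)$ in fact uses uniqueness at a \emph{single} value of $\alpha$ (since $\QD\neq\QN$ already forces $(\LD+1)^{-1}\neq(\LN+1)^{-1}$), so the same reasoning gives $(vi)\Rightarrow(i)$ immediately. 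The paper handles $(vi)\Rightarrow(i)$ by this direct route rather than by claiming $(vi)\Leftrightarrow(iv)$.
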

\begin{proof}
'(i)$\Rightarrow$ (ii)': It suffices to show the statement for transience. Let $m$ be a measure, such that $\QD_m$ is transient and let $m'$ be another measure of full support. Theorem \ref{monopol} shows that there exists a function $v \in \ow{D}$ and $w \in V$, such that 
$$\ow{L}_m v = \delta_w.$$
Then, 
$$\ow{L}_{m'} v = \frac{m(w)}{m'(w)}\delta_w.$$
This implies the existence of a monopole with respect to $\ow{L}_{m'}$ and Theorem \ref{monopol} shows transience of $\QD_{m'}$.

'(ii)$\Rightarrow$ (iii)': This follows from Lemma 7.2.

'(iii)$\Rightarrow$ (iv)': This is obvious.

'(iv)$\Rightarrow$ (i)': By the uniqueness of the solution, we infer $\QD_m = \QN_m$ from Theorem \ref{unique solvability QN=QD}. Since $m$ is finite, Theorem \ref{finite measure} implies recurrence of $\QD_m$.

'(v)$\Leftrightarrow$ (iii)': This is clear since any $v:V \to (0,\infty)$ can be written in the form $v = \alpha m$ and vice versa. Then,  $(\ow{\Delta} + v)u = 0$ if and only if $(\ow{L}_m + \alpha)u = 0$.

'(v)$\Rightarrow$ (vi)': This is clear.

'(vi) $\Rightarrow$ (i)': Let $v:V \to (0,\infty),$ which belongs to $\ell^1(V,1),$ such that
$$(\ow{\Delta} +  v)u = 0 \label{equation}$$
has a unique solution in $\ow{D}$. Then, the above is obviously equivalent to 
$$(\ow{L}_v + 1)u = 0$$
being uniquely solvable in $\ow{D}$. We infer that $(\LD + 1)^{-1}$ and $(\LN+1)^{-1}$ must agree (as resolvents associated with $\QD,\QN$ on  $\ell^2(V,v)$). This shows $\LD = \LN$ which implies $\QD = \QN$. Since $v$ is a finite measure on $V$, we can conclude by Theorem \ref{finite measure} that $\QD_v$ is recurrent arriving at (i).
\end{proof}



\appendix

\section{General results}

In this appendix we provide known results, which would not fit in the main text.  The first part is devoted to general theory of Dirichlet forms, while the second one deals with some results about Dirichlet forms on graphs. The last part provides two theorems about vector valued integrals.

\subsection{Dirichlet forms and associated objects} \label{appendix:dirichlet forms}

\begin{definition} Let $D(Q)$ be a dense subspace of $\ell^2(V,m)$. A map 
$$Q: D(Q) \times D(Q) \rightarrow \RR$$
is called Dirichlet form if the following conditions are satisfied:
\begin{itemize}
\item[$(Q1)$] $Q(f,f) \geq 0$, $Q(f,g) = Q(g,f)$ and $Q(\alpha f + g, h) = \alpha Q(f,h) + Q(g,h)$ for all $f,g,h \in D(Q), \alpha \in \RR.$ \hfill (Linearity)
\item[$(Q2)$] $D(Q)$ equipped with with the inner product 
$$\as{f,g}_Q = Q(f,g) + \as{f,g} $$
is a Hilbert space. \hfill (Closedness)
\item[$(Q3)$] For any \emph{normal contraction} $F$ (i.e. a function $F:\RR \to \RR$ with $F(0)= 0$ and $|F(x)-F(y)| \leq |x-y|$ for all $x,y \in \RR$) and any $u \in D(Q)$ the function $F \circ u$ belongs to $D(Q)$ and the inequality
$$Q(F\circ u) \leq Q(u)  $$
holds. \hfill \text{(Markov~property)}
\end{itemize}
\end{definition}

We will call a Dirichlet form \emph{regular} if $C_c(V)$ is contained in $D(Q)$ and 
$$\overline{C_c(V)}^{\aV{\cdot}_Q} = D(Q),$$ 
where $\aV{\cdot}_Q$ is the norm given by 
$$\aV{\cdot}_Q = \sqrt{\as{\cdot,\cdot}}_Q.$$
\begin{remark}
 This definition of regularity differs from the one in \cite{FOT} which requires $D(Q) \cap C_c(V)$ to be dense in $D(Q)$ with respect to $\|\cdot\|_Q$ and in $C_c(V)$ with respect to $\|\cdot\|_\infty$. However, it was shown in \cite[Lemma~2.1]{KL} that a Dirichlet form on a countable discrete space is regular in the sense of \cite{FOT} if and only if the above is satisfied. 
\end{remark}

\begin{definition} 
A family $(T_t)_{t > 0}$ of bounded linear operators on the Hilbert space $\ell^2(V,m)$ is called \emph{strongly continuous Markovian semigroup} if the following conditions are satisfied: 
\begin{itemize}
\item[(S1)] For any $t>0$ the operator $T_t$ is self-adjoint.  (Symmetry)
\item[(S2)] $T_{t+s} = T_t T_s$ for every $t,s > 0$.  (Semigroup property)
\item[(S3)] $\aV{T_t f}_2\leq \aV{f}_2$ for every $t > 0, f\in \ell^2(V,m)$. (Contractivity)
\item[(S4)] $\aV{T_tf - f}_2 \to 0$ as $t \to 0$ for  every $f \in  \ell^2(V,m).$  (Strong continuity)
\item[(S5)] $0 \leq T_t f \leq 1$ for $f \in \ell^2(V,m)$ with $0\leq f \leq 1$. (Markov property)
\end{itemize}

A family $(G_\alpha)_{\alpha > 0}$ of bounded linear operators on $\ell^2(V,m)$ is called \emph{strongly continuous Markovian resolvent} if the following conditions are satisfied: 
\begin{itemize}
\item[(R1)] For any $\alpha > 0$ the operator $G_\alpha$ is self-adjoint. Symmetry)
\item[(R2)]$G_\alpha - G_\beta + (\alpha-\beta)G_\alpha G_\beta = 0$ for every $\alpha,\beta > 0$.  (Resolvent equation)
\item[(R3)] $\aV{\alpha G_\alpha f}_2\leq \aV{f}_2$ for every $\alpha > 0, f\in \ell^2(V,m)$.  (Contractivity)
\item[(R4)] $\aV{\alpha G_\alpha f - f}_2 \to 0$ as $\alpha \to \infty$ for  every $f \in  \ell^2(V,m)$. (Strong continuity)
\item[(R5)] $0 \leq \alpha G_\alpha f \leq 1$ for $f \in \ell^2(V,m)$ with $0\leq f \leq 1$. (Markov property)
\end{itemize}
\end{definition}

Every Dirichlet form $Q$ is in one to one correspondence with a non-negative self-adjoint operator $L$, a strongly continuous Markovian resolvent $G_\alpha$ and a strongly continuous Markovian semigroup $T_t$. Given any one of those four objects, one can reconstruct the others. We want to give a short discussion about the connection between those objects. For detailed proofs of the below statements see Chapter 1.3/1.4 of \cite{FOT}.
Given a Dirichlet form $Q$, the domain of its associated operator $L$ is given by
$$D(L) = \{ u\in D(Q)| \exists  w  \in \ell^2(V,m)\, \forall v\in D(Q):\, Q(u,v) = \as{w,v}\},$$
on which it  acts by $$Lu = w.$$ This operator is self-adjoint and nonnegative. Furthermore, its square root satisfies $D(L^{1/2}) = D(Q)$ and $$Q(u,v) = \as{L^{1/2}u, L^{1/2}v}$$ for all $u,v  \in D(Q)$. Because $L$ is nonnegative, its spectrum is contained in $[0,\infty)$. Thus, for positive $\alpha$ the operators $(L+\alpha)^{-1}$ exist and are bounded. They satisfy (R1)-(R5). The spectral calculus of $L$ allows us to define $e^{-tL}$ which is a semigroup satisfying (S1)-(S5). Let us stress some more relations of the above objects. 

\begin{itemize}
\item[(i)] Let $u \in \ell^{2}(V,m)$. Then,
$$Q(w,v) + \alpha \as{w,v} = \as{u,v}$$
holds for all $v \in D(Q)$ if and only if $w = (L+\alpha)^{-1}u$.

\item[(ii)] The domain of $L$ is given by 
$$D(L) = \left\{ u \in \ell^2(V,m) \mid \lim_{t\to 0}\frac{u-e^{-tL}u}{t} \text{ exists in } \ell^2(V,m)\right\}.$$
For $u \in D(L)$ the equality
$$Lu = \lim_{t\to 0}\frac{u-e^{-tL}u}{t}$$
holds, where the limit is taken in $\ell^2(V,m)$. 
\end{itemize}

Resolvents and semigroups associated with Dirichlet forms may be uniquely extended to bounded operators on $\ell^1(V,m)$ and $\ell^\infty (V)$. We discuss this extension for the resolvents, the semigroups can be treated similarly. Let $u\in \ell^1 (V,m)\cap \ell^2(V,m)$ and $K\subset V$ finite. Then, using the self-adjointness and the Markov property of $(L+\alpha)^{-1},$ we obtain
\begin{align*}
\sum_{x\in K} |(L+\alpha)^{-1}u(x)| m(x) &\leq \sumv{x} (L+\alpha)^{-1}|u|(x) 1_K(x)m(x) \\
& = \sumv{x} |u|(x) (L+\alpha)^{-1} 1_K(x)m(x)\\
&\leq \sumv{x} \alpha^{-1} |u|(x)m(x).
\end{align*}
Here $1_K$ denotes the indicator function of the set $K$. Since $K$ was  arbitrary, we infer $\aV{(L+\alpha)^{-1}u}_1 \leq \alpha^{-1} \aV{u}_1$. Thus, we can extend $(L+\alpha)^{-1}$ uniquely to $\ell^{1}(V,m)$. Now let $u \in \ell^\infty(V)$ be positive. Then, we can choose a sequence of non-negative functions $(u_n) \subseteq \ell^2(V,m)$ converging monotonously towards $u$. Because  $(L+\alpha)^{-1}$ maps non-negative functions onto non-negative functions we infer 
$$0 \leq (L+\alpha)^{-1}u_n \leq (L+\alpha)^{-1}u_{n+1}.$$
Furthermore, by (R5) we obtain 
$$(L+\alpha)^{-1}u_n \leq \alpha^{-1}\aV{u}_\infty.$$
Hence the limit as $n \to \infty$ exists and is bounded by $\alpha^{-1}\aV{u}_\infty$. It is easy to verify that this limit is independent of the choice of the sequence $u_n$. Now set
$$(L+\alpha)^{-1}u(x) :=  \lim_{n \to \infty} (L+\alpha)^{-1}u_n(x).$$
For an arbitrary $u \in \ell^\infty (V)$ split $u$ in its positive and negative part and repeat the above procedure. We then obtain a linear operator $(L+\alpha)^{-1}: \ell^\infty (V) \to \ell^\infty (V)$ satisfying
$$\aV{\alpha(L+\alpha)^{-1}u}_\infty \leq  \aV{u}_\infty.$$

\subsection{Dirichlet forms associated with graphs} \label{subsection:dirichlet forms of graphs}

\begin{theorem} \label{regular dirichlet forms}
Let $Q$ be a regular Dirichlet form on $\ell^2(V,m)$. Then, there exists a graph $(b,c)$ over $V$ such that $Q = \QD_{b,c}$. 
\end{theorem}
\begin{proof} Theorem 7 of \cite{KL}.
\end{proof}

The next Theorem is an approximation result for the resolvent $(\LD + \alpha)^{-1}$. We will need to fix some notation first. For finite $W \subseteq V$  let $L_{W} = p_W \ow{L}i_W :C(W) \to C(W)$. Here $i_W$ is the canonical embedding of $C(W)$ into $C(V)$ and $p_W$ the projection of $C(V)$ onto $C(W)$. In some sense $L_W$ is the restriction of $\ow{L}$ to $C(W)$, i.e. for $u \in C(W)$ and $x \in W$ we obtain
$$L_Wu(x) = \frac{u(x)}{m(x)}\sum_{y \in V}b(x,y) - \frac{1}{m(x)}\sum_{y \in W}b(x,y)u(y)+ \frac{c(x)}{m(x)} u(x). $$
Then, the following holds.

\begin{theorem} \label{approximation resolvent}
Let $\QD$ be the regular Dirichlet form associated with $(b,c)$ and let $\LD$ be the associated operator. Let $(K_n)$ be a sequence of finite subsets of $V$ such that  $K_n \subseteq K_{n+1}$ and $\cup K_n = V$. Then, for any $u \in C(K_1)$ 
$$\lim_{n \to \infty}\aV{(\LD + \alpha)^{-1}u -  (L_{K_n} + \alpha)^{-1}u}_2 = 0. $$
(Here $u$ and $(L_{K_n} + \alpha)^{-1}u$ are continued by $0$ outside of their domains).
\end{theorem}
\begin{proof} Proposition 2.7 of \cite{KL}.
\end{proof}

\begin{definition}

A bounded operator $T$ on $\ell^2(V,m)$ is called \emph{positivity improving} if $u \geq 0$ and $u \not \equiv 0$ implies $Tu (x) > 0$ for all $x \in V$.

\end{definition}

\begin{theorem}\label{positivity improving}
Let $(b,c)$ be connected and $Q$ be a Dirichlet form associated with $(b,c)$. Then, its corresponding resolvent $(L+\alpha)^{-1}$ and its corresponding semigroup $e^{-tL}$ are positivity improving.
\end{theorem}

\begin{proof}
Theorem 6.3 in \cite{HKLW}.
\end{proof}

\subsection{Vector valued integration}

The following theorems are special cases of statements for Bochner integrals.To avoid Hilbert space valued integration we include elementary proofs for them. Let us fix some notation. Let $ f:V \times [a,b] \to \RR$, such that for each $x \in V$ the function $f(x,\cdot)$ is integrable. We define
$$\int_a^b f(\cdot, t)dt : V \to \RR$$
pointwise via 
$$\int_a^b f(\cdot, t)dt(x):= \int_a^b f(x, t)dt.$$
Then, the following holds.

\begin{theorem} \label{integral inequality}
Assume $a,b \in \RR$. Let $f:V \times [a,b] \to \RR$, such that for each $t \in [a,b]$ the function $f(\cdot,t)$ belongs to $\ell^2(V,m)$  and $t \mapsto f(\cdot,t)$ is continuous as a mapping from $[a,b]$ to $\ell^2(V,m)$. Then, 
$$\aV{\int_a^b f(\cdot, t)dt}_2 \leq \int_a^b \aV{f(\cdot,t)}_2dt.$$
\end{theorem}

\begin{proof}
The continuity assumption ensures that all occurring integrals exist. Without loss of generality we can assume $m \equiv 1$. By monotone convergence it suffices to show the statement for finite sets $V$. This case can be reduced to $|V| =2$ by induction. Assume we have shown the result for all sets of cardinality less or equal to $n$, where $n \geq 2$, and let $|V| = n+1$. Fix $o \in V$. Then, 
\begin{align*}
 &\left(\sum_{x \in V}\left|\int_a^b f(x, t)dt\right|^2  \right)^{1/2}= \\
& = \left(\left|\int_a^b f(o, t)dt\right|^2 + \sum_{x \in V\setminus\{o\}}\left|\int_a^b f(x, t)dt\right|^2 \right)^{1/2}\\
 &\leq  \left(\left|\int_a^b f(o, t)dt\right|^2  + \left\{\int_a^b \left[\sum_{x \in V\setminus\{o\}} | f(x,t)|^2 \right]^{1/2} dt \right\}^2 \right)^{1/2}\\
 & \leq  \int_a^b \left(\sum_{x \in V} | f(x,t)|^2\right)^{1/2} dt = \int_a^b \aV{f(\cdot,t)}_2dt.
\end{align*}
Now let us treat the case $|V|= 2$ to finish the proof. Our continuity assumptions ensure that all of the above integrals can be computed via Riemann sums. Therefore, it suffices to show the statement for simple functions $f,g$ of the form
$$f = \sum_i \alpha_i 1_{A_i}$$
and 
$$g = \sum_i \beta_i 1_{A_i}$$
with pairwise disjoint sets $A_i$. We need to show
$$\left( \left| \int_a^b f(t)dt\right|^2 + \left| \int_a^b g(t)dt\right|^2\right)^{1/2} \leq \int_a^b \left(f(t)^2+g(t)^2\right)^{1/2}dt.$$
Plugging in $f$ and $g$ and taking the square on both sides of the above inequality, we conclude that it is equivalent to
$$\sum_{i,j}(\alpha_i\alpha_j + \beta_i\beta_j) \lambda (A_i)\lambda(A_j) \leq \sum_{i,j}(\alpha_i^2 + \beta_i^2)^{1/2}(\alpha_j^2 + \beta_j^2)^{1/2} \lambda (A_i)\lambda(A_j),$$
where $\lambda$ denotes the Lebesgue measure. But this inequality holds since 
$$\alpha_i\alpha_j + \beta_i\beta_j \leq (\alpha_i^2 + \beta_i^2)^{1/2}(\alpha_j^2 + \beta_j^2)^{1/2}$$
is always true (use that $(c-d)^2 \geq 0$ for arbitrary $c,d \in \RR$). This finishes the proof.
\end{proof}

\begin{theorem}\label{vector integration bounded operator}
Assume $a,b \in \RR$. Let $f:V \times [a,b] \to \RR$, such that for each $t \in [a,b]$ the function $f(\cdot,t)$ belongs to $\ell^2(V,m)$  and $t \mapsto f(\cdot,t)$ is continuous as a mapping from $[a,b]$ to $\ell^2(V,m)$. Furthermore, let $T$ be a bounded linear operator on $\ell^2(V,m)$. Then, $\int_a^b f(\cdot,t)dt \in \ell^2(V,m)$ and 
$$T \int_a^b f(\cdot,t)dt =  \int_a^b Tf(\cdot,t)dt. $$
\end{theorem}
\begin{proof}
$\int_a^b f(\cdot,t)dt \in \ell^2(V,m)$ follows from Theorem \ref{integral inequality} because $t \mapsto \aV{f(\cdot,t)}_2$ is continuous and
$$\aV{\int_a^b f(\cdot,t)dt}_2 \leq  \int_a^b \aV{f(\cdot,t)}_2dt.$$
Let $g \in \ell^2(V,m)$. Then, Lebesgue's theorem yields
$$\int_a^b \as{f(\cdot,t),g}dt = \as{\int_a^b f(\cdot,t)dt,g}.$$
Now the statement follows from
\begin{align*}
\as{T \int_a^b f(\cdot,t)dt, g} &= \as{\int_a^b f(\cdot,t)dt, T^*g}\\
&=\int_a^b \as{f(\cdot,t), T^*g}dt\\
&= \int_a^b \as{Tf(\cdot,t),g}dt\\
&= \as{\int_a^b Tf(\cdot,t)dt,g},
\end{align*}
where $T^*$ denotes the adjoint of $T$.
\end{proof}


%


\begin{thebibliography}{11mm}

\bibitem{Che}  M. Chen, \emph{From Markov chains to non-equilibrium particle systems}, Second edition. World Scientific Publishing Co., Inc., River Edge, NJ, 2004.

 
 
\bibitem{DS} P. Doyle, L. Snell, \emph{Random walks and electric networks}, Carus Mathematical Monographs 22, Mathematical Association of America, Washington DC, (1984).
 
\bibitem{FE} W. Feller, \emph{An introduction to probability theory and its applications. Vol. I.} Third edition. John Wiley \& Sons Inc., New York-London-Sydney, 1968.
 
\bibitem{FOT}  M. Fukushima, Y. Oshima, M. Takeda, \emph{Dirichlet forms and symmetric Markov processes},
Second revised and extended edition. de Gruyter Studies in Mathematics, 19. Walter de Gruyter \& Co., Berlin, 2011.

\bibitem{GHKLW} A. Georgakopoulos, S. Haeseler, M.Keller, D. Lenz, R. Wojciechowski, \emph{Graphs of finite measure}, to appear in Journal de Mathématiques Pures et Appliquées.

\bibitem{GM} A. Grigor'yan, J. Masamune, \emph{Parabolicity and Stochastic completeness of manifolds in terms of
Green's formula}, J. Math. Pures Appl. (9) {\bf 100} (2013),  607--632


\bibitem{HK} S. Haeseler, M. Keller, \emph{Generalized solutions and spectrum for Dirichlet forms on graphs}, Boundaries and Spectral Theory, volume 64 of Progress in Probability, (2011) Birkhäuser, 181-201, arXiv:1002.1040.

\bibitem{HKLW} S. Haeseler, M. Keller, D. Lenz,  R.~K.~Wojciechowski, \emph{Laplacians on
infinite graphs: Dirichlet and Neumann boundary conditions}, J. Spectr. Theory \textbf{2} (2012),  397--432.

\bibitem{HKLMS} S. Haeseler, M. Keller, D. Lenz, J. Masamune, M. Schmidt, \emph{Global properties of Dirichlet forms in terms of Green's formula}, arXiv:1412.3355.


\bibitem{HuK} B. Hua, M. Keller, \emph{Harmonic functions of general graph Laplacians}, Calc. Var. Partial Differential Equations, \textbf{51} (2014),  343--362.

\bibitem{JP} P. Jorgensen, E. Pearse, \emph{A discrete Gauss-Green identity for unbounded Laplace operators, and the transience of random walks} 
Israel J. Math. {\bf 196} (2013), no. 1, 113--160. 

\bibitem{Kal} O. Kallenberg, \emph{Foundations of modern probability}, Second edition. Probability and its Applications (New York). Springer-Verlag, New York, 2002.

\bibitem{Kel1} M. Keller,\emph{Essential spectrum of the Laplacian on rapidly branching tessellations}, Math. Ann. {\bf 346} (2010), no. 1, 51--66.

\bibitem{Kel} M. Keller, \emph{Intrinsic metrics on graphs - A survey}, to appear in:  Mathematical Technology of Networks (Proc. Bielefeld 2013), Proc. Math. \& Stat. Springer-Verlag, New York.


\bibitem{KL} M. Keller, D. Lenz, \emph{Dirichlet forms and stochastic completeness of graphs and subgraphs},  J. Reine Angew. Math. (Crelle's Journal)
\textbf{666} (2012), 189--223.

\bibitem{KLSW} M. Keller, D. Lenz, M. Schmidt, M. Wirth \emph{Diffusion determines the recurrent graph}, Adv. Math. {\bf 269} (2015), 364--398. 

\bibitem{KLSWoi} M. Keller, D. Lenz, M. Schmidt,  R.~K.~Wojciechowski \emph{Note on uniformly transient graphs},  arXiv:1412.0815.  

\bibitem{Lyo} T. Lyons, \emph{A simple criterion for transience of a reversible Markov chain}, Ann. Probab. {\bf 11} (1983), no. 2, 393 -- 402.

\bibitem{Sch} M. Schmidt, \emph{Global properties of Dirichlet forms on discrete spaces},  arXiv:1201.3474v1.

\bibitem{Soa} P. Soardi, \emph{Potential theory on infinite networks}, Lecture Notes in Mathematics, 1590. Springer-Verlag, Berlin, 1994.

%

\bibitem{Web} A. Weber, \emph{Analysis of the physical Laplacian and the heat flow on a locally finite graph}, J. Math. Anal. Appl. {\bf 370} (2010), 146--158.

\bibitem{Woe} W. Woess, \emph{Random walks on infinite graphs and groups}, Cambridge Tracts in Mathematics, 138. Cambridge University Press, Cambridge, 2000.

\bibitem{Woj} R. K. Wojciechowski, \emph{Stochastic completeness of graphs}, PHD thesis, (2007), arXiv:0712.1570v2.

\bibitem{Yam1} M. Yamasaki, \emph{Parabolic and hyperbolic infinite networks}, Hiroshima Math. J. {\bf 7} (1977), 135 -- 146. 

\bibitem{Yam2} M. Yamasaki, \emph{Potentials on an infinite network}, Mem. Fac. Shimane Univ. {\bf 13} (1979), 31 -- 44.



\end{thebibliography}
\end{document}